\newcommand{\degre}{\ensuremath{^\circ}}
\newtheorem{letterthm}{Theorem}
\newtheorem{lettercor}[letterthm]{Corollary}
\newtheorem{letterlem}[letterthm]{Lemma}
\newtheorem{theorem}{Theorem}
\newtheorem{lemma}[theorem]{Lemma}
\newtheorem{corollary}[theorem]{Corollary}
\newtheorem{proposition}[theorem]{Proposition}
\theoremstyle{definition}
\newtheorem{definition}[theorem]{Definition}
\theoremstyle{remark}
\newtheorem{remark}[theorem]{Remark}
\numberwithin{theorem}{section}
\begin{document}
\title{Alternating quotients of right-angled Coxeter groups}
\author{Michal Buran}
\date{}
\maketitle

\begin{abstract}
Let $W$ be a right-angled Coxeter group corresponding to a finite non-discrete graph $\mathcal{G}$ with at least $3$ vertices. Our main theorem says that $\mathcal{G}^c$ is connected if and only if for any infinite index convex-cocompact subgroup $H$ of $W$ and any finite subset $\{ \gamma_1, \ldots , \gamma_n \} \subset W \setminus H$ there is a surjection $f$ from $W$ to a finite alternating group such that $f (\gamma_i) \notin f (H)$. A corollary is that a right-angled Artin group splits as a direct product of cyclic groups and groups with many alternating quotients in the above sense.

Similarly, finitely generated subgroups of closed, orientable, hyperbolic surface groups can be separated from finitely many elements in an alternating quotient, answering positively the conjecture of Wilton \cite{wilton2012alternating}.
\end{abstract}

\section{Introduction}
It is often fruitful to study an infinite discrete group via its finite quotients.  For this reason, conditions that guarantee many finite quotients can be useful.

One such notion is residual finiteness. A group $G$ is said to be \emph{residually finite} if for every $g \in G \setminus \{e \}$, there exists a homomorphism $f: G \rightarrow F$, where $F$ is a finite group and $f(g) \neq e$.

%By Scott's criterion \cite{scott1978subgroups} if $X$ is a  connected compact simplicial complex, then $\pi_1 (X)$ is residually finite if and only if every compact subcomplex of the universal cover $\widetilde{X}$ injects into some finite degree covering space of $X$.

We could try to strengthen this notion by requiring that any finite set of non-trivial elements is not killed by some map to a finite group. But these two notions are equivalent as we could simply take product of maps, which don't kill the individual elements.

Another way to modify this is to require that the image of $\gamma$ avoids the image of a specified subgroup $H < G$, which does not contain $\gamma$. If this is true for all finitely generated subgroups $H$, we say that $G$ is \emph{subgroup separable}.

Finitely generated free groups are subgroup separable \cite[Theorem 5.1]{hall1949coset}. The finite quotient $F$ of a free group could be a priori anything. Wilton proved that (for a free group with at least two generators) we can require $f$ to be a surjection onto a finite alternating group, thus giving us some control over the maps which `witness' subgroup separability \cite{wilton2012alternating}.

Scott showed that closed, orientable, hyperbolic surface groups are subgroup separable \cite{scott1978subgroups}. 

Extending and combining methods from both papers, our main theorem shows that even in the case of hyperbolic surface groups, we can require the image to be a finite alternating group.

\begin{definition}
Let $H$ be a subgroup of a finitely generated group $G$, let $\mathcal{C}$ be a class of groups. We say that $H$ is \emph{$\mathcal{C}$-separable} if for any choice of $\{ \gamma_1, \ldots , \gamma_m \} \subset G \setminus H$ there is a surjection $f$ from $G$ to a group in $\mathcal{C}$ such that $f (\gamma_i) \notin f (H)$ for all $i$.
\end{definition}

Note the difference between this terminology and the one above. We talk about subgroups as $\mathcal{C}$-separable in contrast with subgroup separability, which is a property of the entire group.

We will usually take $\mathcal{C}$ to be the class of alternating groups or symmetric groups. We will denote these classes by $\mathcal{A}$ and $\mathcal{S}$, respectively.

In this case, there is a difference between taking a single $\gamma_1$ and multiple group elements as a product of maps surjecting alternating groups is not a map onto an alternating group. In particular, if $G = A_n \times A_m$ then any $\gamma \in G \setminus \{e\}$ does not map to $e$ under at least one of the projections onto factors. However, if we take $\gamma_1, \ldots, \gamma_k$ to be an enumeration of $G \setminus \{e\}$, then the image of any map injective on these elements is isomorphic to $G$ and hence not an alternating group.

The following is our main result.

\begin{letterthm}[Main Theorem] 
Let $\mathcal{G}$ be a non-discrete finite simplicial graph of size at least $3$.
Every infinite index convex-cocompact subgroup of a right-angled Coxeter group $W$ associated to $\mathcal{G}$ is $\mathcal{A}$-separable (and $\mathcal{S}$-separable) if and only if $\mathcal{G}^c$ is connected.
\end{letterthm}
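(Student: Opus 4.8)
The plan is to prove the two implications separately: the disconnected case yields an easy algebraic obstruction, while the connected case is the substantial direction and adapts Wilton's alternating-quotient technology to the right-angled setting.

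First I would dispose of necessity. If $\mathcal{G}^c$ is disconnected then $\mathcal{G}$ is a join, so $W \cong W_1 \times W_2$, where $W_1, W_2$ are the special subgroups on the two nonempty parts; both are nontrivial, and $W$ is infinite under our standing hypotheses. I claim the trivial subgroup $\{e\}$, which is quasiconvex and of infinite index, is not $\mathcal{A}$-separable. The key is a simplicity argument: if $f \colon W_1 \times W_2 \to A_n$ is a surjection with $n \ge 5$, then $f(W_1)$ and $f(W_2)$ are mutually centralizing subgroups generating $A_n$, each is therefore normal in $A_n$, and so by simplicity and triviality of the center one of them is trivial while the other is all of $A_n$. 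Hence every such $f$ kills one of the two factors. Choosing $\gamma_1 \in W_1 \setminus \{e\}$ and $\gamma_2 \in W_2 \setminus \{e\}$, no single surjection can have $f(\gamma_1) \ne e$ and $f(\gamma_2) \ne e$ at once, so $\{e\}$ fails to be $\mathcal{A}$-separable. The same holds for $\mathcal{S}$: the images $f(W_1), f(W_2)$ are again normal in $S_n$, and two commuting normal subgroups generating $S_n$ cannot both be nontrivial, since each would then contain $A_n$ and hence fail to commute; the handful of small cases $A_3, A_4, S_3, S_4$ are ruled out directly (for instance a homomorphism from the involution-generated $W$ to the abelian $A_3 = C_3$ is trivial).

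For sufficiency, assume $\mathcal{G}^c$ is connected, so $W$ is infinite and, crucially, does not decompose as a nontrivial direct product. Let $H$ be an infinite-index quasiconvex subgroup and $\gamma_1, \dots, \gamma_m \in W \setminus H$. I would begin from the known separability of quasiconvex subgroups of right-angled Coxeter groups (these groups are virtually special, so Haglund--Wise separability applies): this produces a finite-index subgroup $K$ with $H \le K$ and $\gamma_i \notin K$ for all $i$, hence a transitive permutation representation $\rho \colon W \to \operatorname{Sym}(W/K)$ whose base-point stabilizer is $K$, so that $\rho(H)$ fixes the base point while no $\rho(\gamma_i)$ does, giving $\rho(\gamma_i) \notin \rho(H)$. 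The image of $\rho$ is, however, only some transitive subgroup of a symmetric group, and the real task is to replace $\rho$ by a surjection onto a genuine alternating group without losing this separation. Each generator $s_v$ is an involution acting as a product of transpositions, and the relations require $\rho(s_u)$ and $\rho(s_v)$ to commute exactly when $uv \in E(\mathcal{G})$. I would enlarge the finite $W$-set and redefine the action so that three things hold simultaneously: (a) the separation is preserved; (b) the Coxeter relations still hold, arranged by letting commuting generators act on independent coordinates; and (c) the image is forced up to $A_n$. For (c) I would exploit the connectivity of $\mathcal{G}^c$, whose spanning tree consists of non-edges of $\mathcal{G}$, i.e. pairs of non-commuting involutions generating infinite dihedral groups; routing long cycles along this tree and producing an element of small support (a $3$-cycle) makes the image primitive, so Jordan's theorem forces it to be $A_n$ or $S_n$, after which a parity adjustment lands it in $A_n$.

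I expect the main obstacle to be the final amalgamation: building a single finite cover of the special cube complex whose permutation action is simultaneously transitive, primitive (hence onto $A_n$), separating of $H$ from the $\gamma_i$, and compatible with the commuting relations imposed by $\mathcal{G}$. These commuting relations are the genuinely new difficulty compared with Wilton's free-group and surface-group settings, where no such constraints among generators exist, and it is precisely the connectivity of $\mathcal{G}^c$ that supplies enough non-commuting generator pairs to push the image up to the full alternating group rather than leaving it trapped inside a product of smaller symmetric groups. That this trap is unavoidable when $\mathcal{G}^c$ is disconnected is exactly the content of the necessity direction, which is what makes connectivity of $\mathcal{G}^c$ the precise dividing line.
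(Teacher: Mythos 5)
Your necessity direction is correct and is essentially the paper's argument (Lemma 2.1 there): when $\mathcal{G}^c$ is disconnected $W$ splits as a nontrivial direct product, the two factors have commuting normal images in any alternating or symmetric quotient, so one factor dies; the paper additionally disposes of the small quotients $A_2,A_3,A_4$ by picking an element in the kernel of the finitely many surjections onto them, which is the same device you gesture at.

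The sufficiency direction, however, has a genuine gap: the step you describe as ``enlarge the finite $W$-set and redefine the action so that (a) separation is preserved, (b) the Coxeter relations still hold, and (c) the image is forced up to $A_n$'' is not a proof sketch but a restatement of the problem, and you acknowledge as much when you call the amalgamation the ``main obstacle.'' In particular you give no mechanism for obtaining primitivity, and your suggestion of ``producing an element of small support (a $3$-cycle)'' is not substantiated --- producing an honest $3$-cycle in the image is not needed and not what happens. The paper's resolution is entirely geometric and is the actual content of the theorem: one takes the convex core $Y$ of $H$ in the Davis--Moussong complex, enlarges it to contain the $\gamma_i v$, and then performs a sequence of \emph{hyperplane deletions}. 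Lemma 3.8 (using connectivity of $\mathcal{G}^c$ to walk along non-edges) shrinks the vertebra to a single vertex; one then grows a ``tail'' by deleting hyperplanes labelled alternately by two non-commuting generators $s_0,s_1$, choosing its length $k$ so that the number of vertices of $H\backslash Y'$ is \emph{prime} --- this is how primitivity is obtained, not via a spanning tree of long cycles. The finite quotient is the coset action on $\Gamma_1=\langle H,\Gamma_0\rangle$ where $\Gamma_0$ is generated by reflections in the bounding hyperplanes (Haglund's Lemma 3.10), which automatically satisfies the Coxeter relations and preserves the separation of the $\gamma_i$ from $H$. The small-support element is a third generator $s_3$, which fixes every coset corresponding to a tail vertex and hence moves at most $|H\backslash Y|$ points out of an arbitrarily large prime degree; Jordan's theorem then gives $A_n$ or $S_n$. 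Finally the parity of each generator is adjusted by local surgeries on the tail (Lemmas 4.2 and 4.3), again exploiting connectivity of $\mathcal{G}^c$ and non-discreteness of $\mathcal{G}$. None of these constructions, which are where the commuting relations are actually reconciled with transitivity, primitivity and separation, appear in your proposal, so as written it does not establish the forward implication.
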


If $\mathcal{G}$ was a discrete graph, there would be difficulties in controlling a permutation parity of the images of generators. It is possible that this can be resolved. A weaker preliminary result, where we can't control parities, still applies in the discrete case. Here ($\mathcal{A} \cup \mathcal{S}$)-separable means that the quotient is required to be either an alternating or a symmetric group.

\begin{letterlem}
Let $\mathcal{G}$ be a finite simplicial graph of size at least $3$.
Every infinite index convex-cocompact subgroup of a right-angled Coxeter group $W$ associated to $\mathcal{G}$ is ($\mathcal{A} \cup \mathcal{S}$)-separable if and only if $\mathcal{G}^c$ is connected.
\end{letterlem}

We require infinite index as otherwise the finite quotient by the normal subgroup contained in $H$ could potentially have no alternating quotients.

Convex-cocompactness is required as not all finitely generated subgroups of RACG are $\mathcal{C}$-separable, where $\mathcal{C}$ is the set of finite groups \cite[Example 10.3]{haglund2008special}.

\begin{lettercor}
Every finitely generated right-angled Artin group is a direct product of cyclic group and groups whose infinite index convex-cocompact subgroups are $\mathcal{A}$-separable.
\end{lettercor}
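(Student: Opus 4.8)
The plan is to reduce the statement to the Main Theorem by first splitting $A_\Gamma$ along the join decomposition of its defining graph $\Gamma$, and then treating each factor separately. Recall that $A_\Gamma \cong A_{\Gamma_1} \times \cdots \times A_{\Gamma_k}$ whenever $\Gamma$ is the simplicial join $\Gamma_1 * \cdots * \Gamma_k$, and that the finest such decomposition is obtained by taking $\Gamma_i = \Gamma[C_i]$, where $C_1, \ldots, C_k$ are the connected components of the complement graph $\Gamma^c$. I would first record the key point that makes this relevant: the induced subgraph on $C_i$ satisfies $\Gamma_i^c = C_i$, which is connected by construction, so every factor has connected complement — exactly the hypothesis appearing in the Main Theorem.

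Next I would classify the factors. A component $C_i$ consisting of a single vertex contributes a factor $A_{\Gamma_i} \cong \mathbb{Z}$; these are the cyclic factors of the statement, and they genuinely must be segregated, since $\mathbb{Z}$ has no alternating quotients at all. If $C_i$ is a complete graph on at least $2$ vertices, then $\Gamma_i$ is discrete and $A_{\Gamma_i}$ is a nonabelian free group, whose finitely generated — equivalently quasiconvex — subgroups are $\mathcal{A}$-separable by Wilton's theorem \cite{wilton2012alternating}. The remaining case is $|C_i| \geq 3$ with $C_i$ not complete; then $\Gamma_i$ is a non-discrete graph on at least $3$ vertices with connected complement, and this is where the Main Theorem has to be brought in.

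For such a factor I would pass to a right-angled Coxeter group via the octahedralization $O\Gamma_i$: double each vertex $v$ into a non-adjacent pair $\{v_1, v_2\}$, and join $u_\epsilon$ to $v_\delta$ exactly when $u$ is adjacent to $v$ in $\Gamma_i$. The assignment $v \mapsto v_1 v_2$ embeds $A_{\Gamma_i}$ as a finite-index subgroup of $W_{O\Gamma_i}$, realized as the kernel of the parity map $W_{O\Gamma_i} \to (\mathbb{Z}/2)^{V(\Gamma_i)}$, so of index $2^{|V(\Gamma_i)|}$. One checks that $(O\Gamma_i)^c$ is connected precisely because $\Gamma_i^c = C_i$ is, and that $O\Gamma_i$ is finite, non-discrete, and has at least $6$ vertices, so the Main Theorem applies to $W_{O\Gamma_i}$.

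It then remains to transfer $\mathcal{A}$-separability from the finite-index overgroup $W_{O\Gamma_i}$ down to $A_{\Gamma_i}$. Given an infinite-index quasiconvex $H \leq A_{\Gamma_i}$ and elements $\gamma_1, \ldots, \gamma_m \in A_{\Gamma_i} \setminus H$, I would note that $H$ stays quasiconvex and of infinite index in $W_{O\Gamma_i}$ (both properties being preserved under the finite-index inclusion), so the Main Theorem produces a surjection $f \colon W_{O\Gamma_i} \twoheadrightarrow A_N$ with $f(\gamma_j) \notin f(H)$ for all $j$. Since $H \leq A_{\Gamma_i}$ we have $f(H) = f|_{A_{\Gamma_i}}(H)$, so the restriction already separates; the only thing to arrange is that $f|_{A_{\Gamma_i}}$ remains onto. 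Because $[W_{O\Gamma_i} : A_{\Gamma_i}] = 2^{|V(\Gamma_i)|}$ is fixed while the minimal index of a proper subgroup of $A_N$ is $N$, the image $f(A_{\Gamma_i})$ must be all of $A_N$ as soon as $N > 2^{|V(\Gamma_i)|}$. I expect the main obstacle to be precisely this last point: it forces the Main Theorem to be used with $N$ arbitrarily large, so I would need to confirm that its proof yields surjections onto $A_N$ for all sufficiently large $N$ rather than onto a single unspecified alternating group. A secondary item to pin down is that the notion of quasiconvexity used in the Main Theorem behaves well under the octahedralization embedding and under finite index — standard, but worth stating explicitly.
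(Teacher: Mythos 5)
Your overall strategy --- split $A(\mathcal{G})$ along the components of $\mathcal{G}^c$, observe that each non-cyclic factor has connected complement, embed it with finite index in a right-angled Coxeter group, and invoke the Main Theorem --- is the paper's strategy. But the embedding you use is wrong. The ``octahedralization'' you describe (double each vertex $v$ into a non-adjacent pair $\{v_1,v_2\}$ and join $u_\epsilon$ to $v_\delta$ exactly when $u\sim v$ in $\Gamma_i$) does \emph{not} realize $A(\Gamma_i)$ as a finite-index subgroup, and the image of $v\mapsto v_1v_2$ is not the kernel of the parity map. Already for $\Gamma_i$ two non-adjacent vertices, $W_{O\Gamma_i}=(\Z/2)^{*4}$ and the kernel of the parity map onto $(\Z/2)^2$ is free of rank $5$ (Euler characteristic $-4$), whereas $\langle u_1u_2,\,v_1v_2\rangle$ is free of rank $2$ and hence of infinite index. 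The correct construction --- the graph $\mathcal{H}$ of Lemma \ref{RAAGtoRACG} (Davis--Januszkiewicz) --- additionally joins $(u,0)$ to $(v,0)$ and to $(v,1)$ for all $u\neq v$, so that the vertices $(u,0)$ generate a finite group $(\Z/2)^{n}$ with $C(\mathcal{H})=\iota(A(\mathcal{G}))\rtimes(\Z/2)^{n}$; only then is the index $2^{n}$. With that graph one still checks $\mathcal{H}^c$ is connected when $\mathcal{G}^c$ is (the paper does this in the proof of Corollary \ref{AsepRAAG}), and, as a bonus, $\mathcal{H}$ is non-discrete even when $\Gamma_i$ is, so the free factors need not be outsourced to Wilton's theorem (though doing so is also legitimate).

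The two points you flag as ``remaining obstacles'' are genuine and are exactly where the paper does work, so they cannot be left as remarks. First, the Main Theorem gives no control on the degree $N$ of the alternating quotient, so ``take $N>2^{|V|}$'' is not available; the paper's Lemma \ref{fi} fixes this by adjoining an auxiliary element $\gamma_0$ that lies in $f^{-1}(f(K))$ for \emph{every} surjection onto $A_m$ with $m\leq [G:N(H)]$, thereby forcing any separating quotient to have large degree and hence to restrict surjectively to the finite-index subgroup (this is where infinite index of $K$ in $H$ is used). Second, quasiconvexity here is not the hyperbolic-group notion but ``cocompact action on a convex subcomplex,'' which depends on the complex; transferring it from the Salvetti complex of $A(\Gamma_i)$ to the Davis--Moussong complex of the Coxeter group is the content of Lemma \ref{qcinRACGtoqcinRAAG} (via collapsing the hyperplanes labelled $s_{(v,0)}$) and is not a formal consequence of finite index. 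With the embedding corrected and these two lemmas supplied, your argument matches the paper's.
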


\begin{lettercor} 
Infinite index finitely generated subgroups of closed, orientable, hyperbolic surface groups are $\mathcal{A}$-separable.
\end{lettercor}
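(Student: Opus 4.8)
The plan is to deduce the corollary from the Main Theorem by realizing every closed orientable hyperbolic surface group as a finite-index subgroup of a right-angled Coxeter group to which the Main Theorem applies, and then transferring $\mathcal{A}$-separability from the overgroup down to the surface subgroup. For $n \ge 5$ let $W = W(C_n)$ be the right-angled Coxeter group whose defining graph is the $n$-cycle $C_n$; geometrically $W$ is generated by the reflections in the sides of a regular right-angled $n$-gon in $\mathbb{H}^2$, so it is a cocompact lattice in $\mathrm{Isom}(\mathbb{H}^2)$ and in particular a hyperbolic group. The graph $C_n$ is non-discrete and has $n \ge 5 > 3$ vertices, and its complement $C_n^c$ is connected for every $n \ge 5$ (each vertex fails to be adjacent to only two others, so $C_n^c$ has minimal degree $n-3 \ge 2$; for $n=5$ one even has $C_5^c \cong C_5$). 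Thus $W$ satisfies the hypotheses of the Main Theorem.

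It is classical (Davis, via Selberg's lemma and the reflection-group structure) that $W$ contains torsion-free finite-index subgroups, each of which acts freely and cocompactly on $\mathbb{H}^2$ and is therefore the fundamental group of a closed hyperbolic surface; passing to a further index-two subgroup if necessary yields closed \emph{orientable} surface groups, and by choosing $n$ and the finite-index subgroup appropriately every genus $g \ge 2$ is realized. I would record this as: each $S := \pi_1(\Sigma_g)$ embeds as a finite-index subgroup of some $W = W(C_n)$ with $n \ge 5$. Since $S$ and $W$ are both hyperbolic and $S$ is finite-index, the inclusion $S \hookrightarrow W$ is a quasi-isometry; hence a finitely generated subgroup $H \le S$ is quasiconvex in $S$ if and only if it is quasiconvex (equivalently, undistorted) in $W$. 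Consequently, if $H$ is an infinite-index quasiconvex subgroup of $S$ then $H$ is quasiconvex in $W$ and $[W:H] = [W:S]\,[S:H] = \infty$, so $H$ is an infinite-index quasiconvex subgroup of $W$.

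The heart of the argument is the transfer step. Fix such an $H \le S$ and elements $\gamma_1, \dots, \gamma_m \in S \setminus H$. By the Main Theorem, $H$ is $\mathcal{A}$-separable in $W$, and the construction there produces a surjection $f : W \twoheadrightarrow A_N$ with $f(\gamma_i) \notin f(H)$ for all $i$ and with $N$ as large as we please; in particular we take $N > [W:S]$. The restriction $f|_S : S \to A_N$ has image of index $[A_N : f(S)] \le [W:S] < N$, since the $[W:S]$ cosets of $S$ in $W$ surject onto cosets of $f(S)$ in $A_N$. As $A_N$ with $N \ge 5$ has no proper subgroup of index less than $N$, the subgroup $f(S)$ must equal $A_N$, so $f|_S$ is a surjection of $S$ onto an alternating group. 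Finally $H \le S$ gives $f(H) = (f|_S)(H)$, whence $(f|_S)(\gamma_i) = f(\gamma_i) \notin f(H) = (f|_S)(H)$ for every $i$. Thus $f|_S$ is the required alternating quotient, proving that $H$ is $\mathcal{A}$-separable in $S$.

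The main obstacle is not this index argument but guaranteeing its input: one must know that the separating alternating quotient of $W$ furnished by the Main Theorem can be taken with $N$ arbitrarily large, so that $N > [W:S]$ is achievable. I expect this to follow directly from the inductive, sheet-increasing nature of the construction in the proof of the Main Theorem; if one wishes to argue only from the bare statement, one can instead enlarge the finite separating set so as to force the degree of the alternating image to grow. A secondary point requiring care is the realization of every genus $g \ge 2$ as a finite-index subgroup of a reflection group $W(C_n)$ with connected complement — here one combines the commensurability of all closed orientable surfaces of genus at least two with the explicit reflection-group covers, choosing $n$ so that the Gauss–Bonnet index constraint is met (for instance $n=5$ gives admissible index $8(g-1)$).
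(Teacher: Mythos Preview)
Your proof is correct and follows essentially the same route as the paper: embed the surface group as a finite-index subgroup of the right-angled reflection group on a cycle, apply the Main Theorem there, and push $\mathcal{A}$-separability down to the finite-index subgroup. The paper streamlines this by fixing $n=5$ throughout (all closed orientable hyperbolic surface groups embed with finite index in $C(C_5)$, so no genus bookkeeping is needed), by citing Scott's correction directly to obtain cubical quasiconvexity in $DM(C_5)$ rather than passing through the hyperbolic-group equivalence, and by packaging your transfer step as a standalone lemma (Lemma~2.2). Your version of the transfer uses the minimal-index fact for $A_N$ in place of the paper's normality/simplicity argument; both work, and your acknowledged fix for forcing $N>[W:S]$---adding an auxiliary element $\gamma_0$ lying in $\bigcap f^{-1}(f(H))$ over the finitely many small alternating quotients---is exactly what the paper's lemma does.
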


To prove theorem A we'll construct a specific finite sheeted cover of the presentation complex of the RACG; this cover will correspond to a finite index subgroup of the right-angled Coxeter group and the action on the cosets of this subgroup will demonstrate the $\mathcal{A}$-separability.

A right-angled Coxeter group acts on Davis-Moussong complex - which is essentially the Cayley complex with $2$-cells uniquely specified by their boundary and with some higher dimensional cells. A convex-cocompact subgroup $K$ acts cocompactly on some convex subcomplex $Y$ of the Davis-Moussong complex. The group $\Gamma_0$ generated by the reflections in the hyperplanes, which bound $Y$, tiles the complex with the translates of $Y$. Together $\Gamma_0$ and $K$ generate a subgroup, which has index $|K \backslash Y|$ in the RACG. This finite-index subgroup depends on the choice of the convex subcomplex. We will iteratively modify the subcomplex until we arrive at one with quotient of prime size with a long protrusion, which does not contain edges with a certain generator as a label (this is where we use the conditions on $\mathcal{G}$). That generator will fix cosets corresponding to the vertices of the protrusion. Prime size and the element fixing many cosets are ingredients of Jordan's theorem, which says that the action on cosets is symmetric or alternating. Finally, we'll tinker a bit to control the parities of the actions and hence whether the image is symmetric or alternating.

\section{Preliminaries}
\subsection{$\mathcal{A}$-separability}
We will establish some properties of $\mathcal{A}$-separability.
\begin{lemma} \label{product}
Let $A$ and $B$ be non-trivial finitely generated groups. Then $\{e \} < A \times B$ is not $\mathcal{A}$-separable.
\end{lemma}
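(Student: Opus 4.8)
The plan is to show that separating the trivial subgroup $\{e\}$ from a well-chosen finite list of elements of $A \times B$ forces any target alternating quotient to receive a map from $A \times B$ that is essentially a product, and a product of nontrivial groups cannot surject onto a simple group like $A_n$ (for $n \geq 5$) in a way consistent with the separation requirement.

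Here is the approach in more detail. Suppose, for contradiction, that $\{e\}$ were $\mathcal{A}$-separable in $G = A \times B$. Pick nontrivial elements $a \in A \setminus \{e\}$ and $b \in B \setminus \{e\}$, viewed inside $G$ as $(a,e)$ and $(e,b)$, and apply the separability hypothesis to the finite set $\{(a,e),\, (e,b),\, (a,b)\} \subset G \setminus \{e\}$. This yields a surjection $f : G \twoheadrightarrow A_n$ with $f(a,e)$, $f(e,b)$, and $f(a,b)$ all nontrivial. The key structural observation is that $f(A \times \{e\})$ and $f(\{e\} \times B)$ are commuting subgroups of $A_n$ that together generate $A_n$, since $f$ is surjective and the two factors generate $G$. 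So $A_n$ is generated by two commuting subgroups $P = f(A \times \{e\})$ and $Q = f(\{e\} \times B)$, both nontrivial by the choice of $a$ and $b$.

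The main obstacle, and the heart of the argument, is to derive a contradiction from the existence of two nontrivial commuting subgroups $P, Q$ that generate $A_n$. The natural route is to invoke simplicity of $A_n$ for $n \geq 5$: if $P$ and $Q$ commute, then $P$ normalizes $Q$ and vice versa, so $Q$ lies in the centralizer $C_{A_n}(P)$, and $PQ$ is a subgroup. Since $PQ = A_n$ is simple and nonabelian, I would argue that one of $P$, $Q$ must be trivial — for instance, using that the centralizer of any nontrivial subgroup of $A_n$ ($n \geq 5$) is too small to help generate the whole group, or more cleanly that a nonabelian simple group is directly indecomposable and has trivial center, so it cannot be an internal product $PQ$ of two commuting nontrivial subgroups unless one is contained in the other and central. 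This contradicts the nontriviality of both $f(a,e)$ and $f(e,b)$. The one genuinely delicate point is handling small $n$: the definition of $\mathcal{A}$-separability only demands some alternating quotient, so I must confirm the target is $A_n$ with $n \geq 5$ (the only $A_n$ that are nonabelian simple), ruling out the degenerate cases $A_1, A_2, A_3, A_4$ separately or by noting that a surjection onto such a small group cannot separate the chosen elements.

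To make the commuting-subgroup contradiction fully rigorous, I would record the elementary fact that if $G$ is generated by commuting subgroups $P$ and $Q$, then the commutator subgroup $[G,G]$ is generated by $[P,P]$ and $[Q,Q]$, hence $G/([P,P][Q,Q])$ is abelian; applying this with $G = A_n$ perfect forces $A_n = [A_n, A_n] \leq [P,P][Q,Q]$, so $P$ and $Q$ cannot both be proper. Combined with simplicity, this pins down that $A_n$ itself is a nonabelian simple quotient of a direct product, which standard group theory forbids as a faithful product unless a factor dies. I expect the write-up to be short once the three-element separating set is fixed, with all the real content concentrated in the simple-group lemma; the routine verifications about surjectivity and the images of the generators should go through without incident.
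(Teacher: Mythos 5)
Your core structural step -- that the images $P = f(A\times\{e\})$ and $Q = f(\{e\}\times B)$ are commuting subgroups generating $A_n$, hence both normal, hence (by simplicity and non-commutativity of $A_n$ for $n\ge 5$) one of them trivial -- is exactly the paper's argument. The gap is in how you dispose of small $n$, and neither of your two suggested remedies works. First, it is \emph{not} true that a surjection onto a small alternating group cannot separate your chosen set: take $A = B = \Z$ (or $\Z/3$), $a = b = 1$, and $f : A\times B \to A_3 \cong \Z/3\Z$ given by $(x,y)\mapsto x+y \bmod 3$; then $f(a,e)$, $f(e,b)$, $f(a,b)$ are all nontrivial, so $\{(a,e),(e,b),(a,b)\}$ is genuinely separated by a map onto $A_3$, and $A_3$ is abelian so your simplicity argument says nothing. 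Second, you cannot ``rule out the degenerate cases separately,'' because $\mathcal{A}$-separability only demands \emph{some} alternating quotient for the given finite set; to exclude $A_3$ you must change the set itself.

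The paper's fix, which your proposal is missing, is to enlarge the separating set: since $A\times B$ is finitely generated, there are only finitely many surjections onto $A_2$, $A_3$, $A_4$, so if $A\times B$ is infinite the intersection of their kernels is a nontrivial finite-index subgroup and one can add to the set an element $g\neq e$ killed by every such surjection, which forces $n>4$. This still leaves the case where $A$ and $B$ are both finite (e.g.\ $A=B=\Z/3$, where the intersection of those kernels is trivial and no such $g$ exists); there the paper enumerates all of $A\times B$ as the separating set, so that a separating surjection would be an isomorphism $A\times B\cong A_n$, contradicting the direct indecomposability of $A_n$. Your write-up would need both of these additions to close the argument.
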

\begin{proof}
There are only finitely many surjections from $A \times B$ onto $A_2, A_3$ and $A_4$. If $A \times B$ is infinite, then there is a non-identity element $g$ in the kernel of all these maps. Consider elements $g, (e,b), (a,e)$, where $a \neq e$, $b \neq e$. Suppose $f:A \times B \rightarrow A_n$ is a surjection, which does not map these elements to $e$.

By the choice of $g$, we have $n>4$. The group $f(A \times e)$ is a normal subgroup of $A_n$, so it is $e$ or $A_n$. Similarly for $e \times B$. However $A_n$ is not commutative, so one of $A \times e, e \times B$ is mapped to $e$.

If both $A$ and $B$ are finite and $\{e \} < A \times B$ is $\mathcal{A}$-separable, enumerate $A \times B$ as $\gamma_1, \ldots \gamma_m$. Applying the $\mathcal{A}$-separability condition with respect to this set, we get an isomorphism $f : A \times B \rightarrow A_n$.  However, $A_n$ is not a direct product, so one of $A, B$ is $A_n$ and the other is trivial.
\end{proof}

This implies that passing to a finite degree extension does not in general preserve $\mathcal{A}$-separability of convex-cocompact subgroups. However passing to a finite-index subgroup does:

\begin{lemma} \label{fi}
Let $G$ be a finitely generated group, let $H$ be a finite-index subgroup of $G$, and let $K$ be an infinite index subgroup of $H$. If $K$ is $\mathcal{A}$-separable in $G$, then it is $\mathcal{A}$-separable in $H$. 
%If convex-cocompact subgroups of $W$ are $\mathcal{A}$-separable and $H$ is a finite index subgroup of $W$, then convex-cocompact subgroups of $H$ are $\mathcal{A}$-separable.
\end{lemma}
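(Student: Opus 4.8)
The plan is to begin with the obvious operation and then repair its one defect. Given a finite set $\{\gamma_1,\dots,\gamma_m\}\subset H\setminus K$, I observe that it also lies in $G\setminus K$, so the hypothesis that $K$ is $\mathcal{A}$-separable in $G$ supplies a surjection $f\colon G\to A_n$ with $f(\gamma_i)\notin f(K)$ for every $i$. The natural candidate for a separating map of $H$ is the restriction $f|_H\colon H\to A_n$: since $K\le H$ we have $f|_H(K)=f(K)$, so $f|_H(\gamma_i)\notin f|_H(K)$ for free. The only thing that can go wrong is that $f|_H$ need not surject onto an alternating group, since its image is merely the subgroup $f(H)\le A_n$, which a priori could be proper. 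Thus the whole lemma reduces to arranging $f(H)=A_n$.

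The main obstacle, and the crux of the argument, is therefore to control the image $f(H)$. I would exploit two facts. First, $f(H)$ can only have small index: the surjection $f$ induces a surjection of coset spaces $G/H\twoheadrightarrow A_n/f(H)$, whence $[A_n:f(H)]\le [G:H]=:d<\infty$. Second, for $n\ge 5$ the group $A_n$ has no proper subgroup of index less than $n$; indeed such a subgroup would yield a faithful action of the simple group $A_n$ on fewer than $n$ points, embedding $A_n$ into $S_{n-1}$, which is impossible because $n!/2>(n-1)!$. Combining these, once I know the degree satisfies $n>d$ (and $n\ge 5$), the bound $[A_n:f(H)]\le d<n$ forces $f(H)=A_n$.

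It remains to force the alternating group to be large, which is exactly where the infinite-index hypothesis is used. Since $[H:K]=\infty$ we get $[G:K]=[G:H]\,[H:K]=\infty$, so $G$ contains arbitrarily many elements $g_1,\dots,g_M$ lying in pairwise distinct right cosets of $K$, and then $g_jg_l^{-1}\in G\setminus K$ for $j\ne l$. I would apply $\mathcal{A}$-separability in $G$ not to $\{\gamma_i\}$ alone but to the enlarged test set $\{\gamma_i\}\cup\{\,g_jg_l^{-1}:j\ne l\,\}$. The resulting $f$ satisfies $f(g_jg_l^{-1})\ne e$, so $f$ is injective on $\{g_1,\dots,g_M\}$ and hence $|A_n|\ge M$. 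Choosing $M>\tfrac12(\max(d,4))!$ forces $n>\max(d,4)$, giving simultaneously $n\ge 5$ and $n>d$. The index argument of the previous paragraph then yields $f(H)=A_n$, and $g:=f|_H$ is the desired surjection of $H$ onto an alternating group separating $K$ from $\gamma_1,\dots,\gamma_m$. The only routine points left to verify are the coset-space surjection giving the index bound and the elementary order comparison behind the minimal-index fact.
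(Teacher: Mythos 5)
Your proof is correct, and while it follows the same skeleton as the paper's --- restrict a separating surjection $f\colon G\twoheadrightarrow A_n$ to $H$, and use the infinite index of $K$ to force $n$ so large that $f(H)$ must be all of $A_n$ --- both key steps are carried out by genuinely different means. To force $n$ large, the paper enlarges the test set by a single element $\gamma_0\in G\setminus K$ chosen to lie in $f^{-1}(f(K))$ for every surjection $f\colon G\twoheadrightarrow A_m$ with $m\le M=[G:\bigcap_g H^g]$; finding such a $\gamma_0$ uses the finite generation of $G$ (finitely many surjections onto each small $A_m$) together with the infinite index of $K$. You instead enlarge the test set by the pairwise quotients $g_jg_l^{-1}$ of representatives of $M$ distinct cosets of $K$, which forces $|A_n|\ge M$ directly and never invokes finite generation --- so your argument actually proves a slightly more general statement. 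To conclude $f(H)=A_n$, the paper passes to the normal core $N(H)$ and uses that $f(N(H))$ is a normal subgroup of small index in the simple group $A_m$; you use the index bound $[A_n:f(H)]\le[G:H]$ together with the fact that $A_n$ ($n\ge5$) has no proper subgroup of index less than $n$. A small bonus of your version: by arranging $n>\max(d,4)$ you explicitly guarantee $n\ge5$ before invoking simplicity/minimal degree, whereas the paper's step ``$[A_m:f(N(H))]\le M$, so $f(N(H))=A_m$'' is slightly delicate for the degenerate value $m=4$, $M=3$ (where $V_4\trianglelefteq A_4$ has index $3$). Both proofs are fine for the application, but yours is self-contained and a touch more robust.
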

We need $K$ to be infinite index in $H$, as otherwise it is possible that $K = N(H)$ in the notation of the proof below. E.g. take $G = A_n$, $H$ a proper subgroup, $K = \{e\}$.

\begin{proof}
Suppose $\gamma_1, \ldots, \gamma_n \in H \setminus K$.

Let $N(H) = \bigcap_{g \in G} H^g$ be a normal subgroup contained in $H$. Then $N(H)$ is still finite index and let $M = [G : N(H)]$ be this index. Since $G$ is finitely generated, there are only finitely many surjections $f : G \twoheadrightarrow A_m$ with $m \leq M$. The intersection of preimages of $f(K)$ over such surjections is a finite intersection of finite index subgroups, hence a finite index subgroup. So there exists some $\gamma_0 \in G \setminus K$ such that $f(\gamma_0) \in f(K)$ for all $f : G \twoheadrightarrow A_m$ with $m \leq M$.

As $K$ is $\mathcal{A}$-separable in $G$, there exists a surjection $f : G \twoheadrightarrow A_m$, such that $f(\gamma_i) \notin f(K)$ for all $i \in \{0, \ldots n\}$. By the choice of $\gamma_0$ we have $m > M$. But $[A_m:f(N(H))] \leq M$, so $f(N(H)) = A_m$. In particular, $f(H) = A_m$ and $f|_H$ is the desired surjection.
\end{proof}

\subsection{Cube complexes}
For further details of the definitions from this section, the reader is referred to \cite{haglund2008special}.

\begin{definition}[Cube, face]
\emph{An $n$-dimensional cube $C$} is $I^n$, where $I = [-1,1]$. \emph{A face} of a cube is a subset $F = \{\underline{x}: x_i = (-1)^\epsilon \}$, where $1 \leq i \leq n$, $\epsilon =0,1$.
%\emph{An inclusion of a face} is a map of the form $f: I^{n-1} \longrightarrow I^n$, where $f$ is an isometry onto a face.
\end{definition}

\begin{definition}[Cube complex]
Suppose $\mathcal{C}$ is a set of cubes and $\mathcal{F}$ is a set of maps between these cubes, each of which is an inclusion of a face. Suppose that every face of a cube in $\mathcal{C}$ is an image of at most one inclusion of a face $f \in \mathcal{F}$.
Then \emph{the cube complex $X$ associated to $(\mathcal{C},\mathcal{F})$} is $$ X = (\bigsqcup_{C \in \mathcal{C}} C) / \sim$$
where $\sim$ is the smallest equivalence relation containing $x \sim f(x)$ for every $f \in \mathcal{F}$, $x \in Dom(f)$.
\end{definition}

\begin{definition}[Midcube]
\emph{A midcube $M$} of a cube $I^n$ is a set of the form $\{\underline{x}: x_i = 0 \}$ for some $ 1 \leq i \leq n$.
\end{definition}

If $f: C \rightarrow C'$ is an inclusion of a face and $M$ is a midcube of $C$, then $f(M)$ is contained in unique midcube $M'$ of $C'$. Moreover $f|_M :M \rightarrow M'$ is an inclusion of a face.

\begin{definition}[Hyperplane]
Let $X$ be a cube complex associated to $(\mathcal{C},\mathcal{F})$. Let $\mathcal{M}$ be the set of midcubes of cubes of $\mathcal{C}$. Let $\mathcal{F'}$  be the set of restrictions of maps in $\mathcal{F}$ to midcubes.

The pair $(\mathcal{M}, \mathcal{F}')$ satisfies that every face is an image of at most one inclusion of a face, so there is an associated cube complex $X'$. Moreover, inclusions of midcubes descend to a map $\varphi : X' \rightarrow X$. \emph{A hyperplane $H$} is a connected component of $X'$ together with a map $\varphi|_H$.
\end{definition}

Hyperplanes are analogous to codimension-1 submanifolds.

\begin{definition}[Elementary parallelism, wall]
Suppose $X$ is a cube complex.

Define a relation of \emph{elementary parallelism} on oriented edges of $X$ by $\overrightarrow{e_1} \sim \overrightarrow{e_2}$ if they form opposite edges of a square. Extend this to the smallest equivalence relation. \emph{The wall $W(\overrightarrow{e})$} is the equivalence class containing $\overrightarrow{e}$. Similarly, we can define an elementary parallelism on unoriented edges and \emph{an unoriented wall $W(e)$}.
\end{definition}

We denote by $\overleftarrow{e}$ the edge $\overrightarrow{e}$ with the opposite orientation.

There is a bijective correspondence between unoriented walls and hyperplanes, where $W(e)$ corresponds to $H(e)$, a hyperplane which contains the unique midcube of $e$.
We say $H(e)$ is dual to $e$. By abuse of notation, we sometimes identify $H(e)$ with its image.

The following notion was invented by Haglund and Wise and was originally called \emph{A-special} \cite[Definition 3.2]{haglund2008special}.

\begin{definition}[Special cube complex]
A cube complex is \emph{special} if the following holds. 
\begin{enumerate}
\item For all edges $\overrightarrow{e} \notin W(\overleftarrow{e})$. We say the hyperplanes are $2$-sided.
\item Whenever $\overrightarrow{e_2} \in W(\overrightarrow{e_1})$, then $e_1$ and $e_2$ are not consecutive edges in a square. Equivalently, each hyperplane embeds.
\item Whenever $\overrightarrow{e_2} \in W(\overrightarrow{e_1})$, $\overrightarrow{e_2} \neq \overrightarrow{e_1}$, then the initial point of $\overrightarrow{e_2}$ is not the initial point of $\overrightarrow{e_1}$. We say that no hyperplane directly self-osculates.
\item Whenever $\overrightarrow{e_2} \in W(\overrightarrow{e_1})$ and $\overrightarrow{f_2} \in W(\overrightarrow{f_1})$ and $e_1$ and $f_1$ form two consecutive edges of a square, if $\overrightarrow{e_2}$ and $\overrightarrow{f_2}$ start at the same vertex, then $\overleftarrow{e_2}$ and $\overrightarrow{f_2}$ are two consecutive edges in some square, and if $\overleftarrow{e_2}$ and $\overrightarrow{f_2}$ start at the same vertex, then $\overrightarrow{e_2}$ and $\overrightarrow{f_2}$ are two consecutive edges in some square. We say that no two hyperplanes inter-osculate.
\end{enumerate}
\end{definition}

Haglund and Wise have shown that $CAT(0)$ cube complexes are special \cite[Example 3.3.(3)]{haglund2008special}. In this paper, we will only ever use specialness of these complexes.

Every special cube complex is contained in a nonpositively curved complex with the same $2$-skeleton \cite[Lemma 3.13]{haglund2008special}. The hyperplane $H(e)$ separates a $CAT(0)$ cube complex $X$ into two connected components. 

\begin{definition} [Half-space, \cite{haglund2008finite}] Suppose $X$ is a cube complex and $H$ is a hyperplane. Let $X \backslash \backslash H$ be the union of cubes disjoint from $H$. If $X$ is $CAT(0)$, $X \backslash \backslash H$ has two connected components. Call them \emph{half-spaces $H^-$ and $H^+$}
\end{definition}

\begin{definition}
Define \emph{$N(H)$} to be the union of all cubes intersecting $H$. Let \emph{$\partial N(H)$} consist of cubes of $N(H)$ that do not intersect $H$. In the case of a simply connected special cube complex $\partial N(H)$ has two components; call them \emph{$\partial N(H)^ +$} and \emph{$\partial N(H)^ -$}. 
\end{definition}

\begin{definition} [Convex subcomplex]
A subcomplex $Y$ of a cube complex $X$ is \emph{(combinatorially geodesically) convex} if any geodesic in $X^{(1)}$ with endpoints in $Y$ is contained in $Y$.
\end{definition}

The components of the boundary of a hyperplane $\partial N(H)^ +$, $\partial N(H)^ -$ and half-spaces  are combinatorially geodesically convex \cite[Lemma 2.10]{haglund2008finite}. Any intersection of half-spaces is convex \cite[Corollary 2.16]{haglund2008finite} and a convex subcomplex of a $CAT(0)$  cube complex coincides with the intersection of all half-spaces containing it \cite[Proposition 2.17]{haglund2008finite}.

%\begin{definition}[Convex-cocompact subgroup]
%If a group $G$ acts on a cube complex $X$ and $H <G$, we say $H$ is \emph{convex-cocompact subgroup of $G$ (with respect to the action on $X$)} if $H$ acts on some non-empty finite subcomplex of $X$ with a compact quotient.
%
%\end{definition}

\begin{definition} [Bounding hyperplane]
A hyperplane \emph{bounds} a convex cubical subcomplex $Y \subset X$ if it is dual to an edge with endpoints $v \in Y$ and $v' \notin Y$.
\end{definition}
\subsection{Right-angled Coxeter and Artin groups}

\begin{definition} [Right-angled Coxeter group]
Given a graph $\mathcal{G}$ with vertex set $I$, let $S = \{ s_i : i \in I \}$. \emph{The right-angled Coxeter group} associated to $\mathcal{G}$ is the group $C(\mathcal{G})$ given by the presentation $\langle S \mid s_i^2 = 1 \text{ for } i \in I, [s_i, s_j]=1  \text{ for } (i,j) \in E(\mathcal{G}) \rangle$. 
\end{definition}

The right-angled Coxeter group $C(\mathcal{G})$ acts on 
\emph{the Davis--Moussong Complex $DM(\mathcal{G})$} \cite{haglund2008special}. Throughout the paper if we talk about the action of $C(\mathcal{G})$ on a cube complex, we mean this action.
The Davis--Moussong complex is similar to Cayley complex, but it does not contain `duplicate squares' and it contains higher dimensional cubes.

Fix a vertex $v_0 \in DM(\mathcal{G})$. There is a bijection between the vertices of $DM(\mathcal{G})$ and the elements of $C(\mathcal{G})$ given by $gv_0 \longleftrightarrow g$.
Vertices $g v_0$ and $g s v_0$ are connected by an edge $ge_s$ labelled $s$. If the generators $s_{i_1}, s_{i_2}, \ldots s_{i_n}$ pairwise commute, there is an $n$-cube with the vertex set $\{ g (\Pi_{j \in P} s_{i_j}) v : P \subset \{1, \ldots, n \} \}$.

Note that $g s_i g^{-1}$ acts on the left on $DM(\mathcal{G})$ as a reflection in $H(ge_{s_i})$. 
There is also a right action of $C(\mathcal{G})$ on $DM(\mathcal{G})^0$, where $s_i$ sends $g v_0$ to $g s_i v_0$ -- the vertex to which $g$ is connected by an edge labelled $s_i$. This action does not extend to $DM(\mathcal{G})$ unless the Coxeter group is abelian.

More generally, if $\Gamma$ is a subgroup of $C(\mathcal{G})$, the action of $C(\mathcal{G})$ on the right cosets of $\Gamma$ can be realised geometrically as an action of $C(\mathcal{G})$ on $\Gamma \backslash DM(\mathcal{G})^0$. This action is given by $ (\Gamma hv_0 ).g= \Gamma hg v_0$.
If $\Gamma$ acts on $DM(\mathcal{G})$ co-compactly, this gives a finite permutation action. We will use this to construct maps from $C(\mathcal{G})$ to $S_n$.

\begin{definition}[Convex-cocompact subgroup]
If $G$ acts on a cube complex $X$, we say $H < G$ is \emph{convex-cocompact} if there is a non-empty convex subcomplex $Y \subset X$, which is invariant under $H$ and moreover $H$ acts on $Y$ cocompactly. We say, that $H$ acts on $X$ with \emph{core} $Y$.
\end{definition}

If $X$ is hyperbolic, this coincides with the usual notion of convex-cocompactness \cite{haglund2008finite}.

\begin{definition}[Right-angled Artin group]
The right-angled Artin group associated to a simplicial graph $\mathcal{G}$ is $A(\mathcal{G}) = \langle g_v: g \in V(\mathcal{G}) \mid g_u g_v = g_v g_u \text{ for } \{u,v\} \in E(\mathcal{G}) \rangle$.
\end{definition}

The next lemma relates RAAGs and RACGs.
\begin{lemma} \cite{davis2000right} \label{RAAGtoRACG}
Given a graph $\mathcal{G}$, define a graph $\mathcal{H}$ as follows:
\begin{itemize}
\item $V(\mathcal{H})=V(\mathcal{G}) \times \{0,1\}$
\item  $(u,1)$ and $(v,1)$ are connected by an edge if $\{u, v\}$ is an edge of $\mathcal{G}$. The vertices $(u,0)$ and $(v,1)$ are connected by an edge if $u$ and $v$ are distinct. Similarly, $(u,0)$ and $(v,0)$ are connected by an edge if $u$ and $v$ are distinct.
\end{itemize}
The right-angled Artin groups $A(\mathcal{G})$ is a finite-index subgroups of the right-angled Coxeter group $C(\mathcal{H})$ via the inclusion $\iota$ extending $g_u \longrightarrow s_{(u,0)} s_{(u,1)}$.

\end{lemma}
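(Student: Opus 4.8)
The plan is to verify three things: that $\iota$ extends to a well-defined homomorphism, that its image has finite index, and that it is injective. The first two are short; injectivity is the real content. First I would record the commutation pattern in $C(\mathcal{H})$ directly from the edge set of $\mathcal{H}$: the generator $s_{(u,0)}$ commutes with every generator except its partner $s_{(u,1)}$, while $s_{(u,1)}$ commutes with $s_{(v,0)}$ for all $v \neq u$ and with $s_{(v,1)}$ precisely when $\{u,v\} \in E(\mathcal{G})$. Consequently, if $\{u,v\} \in E(\mathcal{G})$ (so $u \neq v$) then all four generators $s_{(u,0)}, s_{(u,1)}, s_{(v,0)}, s_{(v,1)}$ pairwise commute, whence $\iota(g_u) = s_{(u,0)} s_{(u,1)}$ and $\iota(g_v) = s_{(v,0)} s_{(v,1)}$ commute. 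Thus each defining relator of $A(\mathcal{G})$ maps to the identity and $\iota$ is a homomorphism.

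For the index, consider $\pi \colon C(\mathcal{H}) \to (\Z/2)^{V(\mathcal{G})}$ sending both $s_{(u,0)}$ and $s_{(u,1)}$ to the basis vector $e_u$; this is well defined because the target is abelian of exponent $2$. Since $\pi(\iota(g_u)) = 2 e_u = 0$, we have $\iota(A(\mathcal{G})) \subseteq \ker \pi$, a subgroup of index $2^{|V(\mathcal{G})|}$. (The map $\pi$ even splits via $e_u \mapsto s_{(u,0)}$, exhibiting $C(\mathcal{H}) = \ker\pi \rtimes (\Z/2)^{V(\mathcal{G})}$.) I would then show $\iota$ surjects onto $\ker\pi$ using Reidemeister–Schreier with the transversal $\{ t_S := \prod_{u \in S} s_{(u,0)} : S \subseteq V(\mathcal{G}) \}$, which is a genuine transversal because the level-$0$ generators commute and $\pi(t_S) = \sum_{u \in S} e_u$. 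Writing $\gamma(t_S,x) = t_S\, x\, \overline{t_S x}^{-1}$ for the Schreier generators, a short computation from the commutation pattern gives $\gamma(t_S, s_{(v,0)}) = 1$ for every $S$, and $\gamma(t_S, s_{(v,1)}) = \iota(g_v)$ when $v \in S$ and $\iota(g_v)^{-1}$ when $v \notin S$. Hence $\ker\pi = \langle \iota(g_v) : v \in V(\mathcal{G}) \rangle = \iota(A(\mathcal{G}))$, so the image is exactly $\ker\pi$ and the index is $2^{|V(\mathcal{G})|}$.

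For injectivity I would upgrade this to the full Reidemeister–Schreier presentation of $\ker\pi$ and check that, after discarding the trivial generators, every defining relator of $C(\mathcal{H})$ rewrites to a right-angled Artin relation and nothing more. Sorting by relator type: the relators $s_{(u,0)}^2$ and the edge relators of $\mathcal{H}$ meeting a level-$0$ vertex rewrite to the trivial word (their contributing Schreier generators vanish, and they only encode that $\gamma(t_S, s_{(v,1)})$ does not depend on the letters $s_{(u,0)}$, $u \neq v$, of $t_S$); the relators $s_{(u,1)}^2$ rewrite to $\iota(g_u)^{-1}\iota(g_u) = 1$; and for $\{u,v\} \in E(\mathcal{G})$ the relator $(s_{(u,1)} s_{(v,1)})^2$ rewrites, up to conjugation, to $\iota(g_u)^{-1} \iota(g_v)^{-1} \iota(g_u) \iota(g_v)$, i.e. the single commutator $[\iota(g_u), \iota(g_v)]$. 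Therefore $\ker\pi$ has the presentation $\langle\, \iota(g_v) \mid [\iota(g_u), \iota(g_v)] = 1 \text{ for } \{u,v\} \in E(\mathcal{G}) \,\rangle$, which is exactly the standard presentation of $A(\mathcal{G})$ under $g_v \mapsto \iota(g_v)$. Hence $\iota$ is an isomorphism onto $\ker\pi$, and in particular injective with finite-index image.

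The main obstacle is this last rewriting step: verifying that the complete list of rewritten relators produces no spurious relation, and in particular no torsion relation — the delicate point being that the abundant level-$0$ relators must all collapse to triviality or to mere consistency conditions. It is worth flagging why I do not instead try to prove that $\iota$ doubles word length: each $s_{(u,0)}$ is almost central (it commutes with all generators but $s_{(u,1)}$), so level-$0$ letters cancel freely and $\iota(w)$ is usually far shorter than $2|w|$; for example $\iota(g_u^{-1} g_v g_u) = s_{(u,1)} s_{(v,0)} s_{(v,1)} s_{(u,1)}$ has reduced length $4$ rather than $6$ when $u, v$ are non-adjacent. This is exactly why the Reidemeister–Schreier route is cleaner than an attempt via geodesic normal forms. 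The statement itself is due to Davis–Januszkiewicz.
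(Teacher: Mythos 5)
The paper does not prove this lemma at all: it is quoted from Davis--Januszkiewicz \cite{davis2000right} with no argument given, so there is nothing in the paper to compare your proof against and I can only assess it on its own terms. Your commutation table for $C(\mathcal{H})$, the retraction $\pi$, the transversal $\{t_S\}$, and the computed values of the Schreier generators are all correct, and they do establish that $\iota$ is a homomorphism with image exactly $\ker\pi$, of index $2^{|V(\mathcal{G})|}$. The unfinished part is precisely the one you flag, and it is more than a formality. In the abstract Reidemeister--Schreier presentation the symbols $\gamma(t_S, s_{(v,0)})$ and $\gamma(t_S, s_{(v,1)})$ are genuine generators; the identities $\gamma(t_S, s_{(v,0)})=1$ and $\gamma(t_S, s_{(v,1)})=\iota(g_v)^{\pm1}$ that you compute are identities \emph{in the group} $C(\mathcal{H})$, not consequences you are entitled to assume in the presentation. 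With your ordered transversal only some of the level-$0$ Schreier generators are freely trivial (e.g.\ $\gamma(t_{\{v\}}, s_{(v,0)})$ is the word $s_{(v,0)}^2$, not the empty word), so one must verify that the $2^{|V(\mathcal{G})|}$ rewritten copies of each relator of $C(\mathcal{H})$ really do (i) kill every $\gamma(t_S,s_{(v,0)})$, (ii) identify all $\gamma(t_S,s_{(v,1)})^{\pm1}$ with a single generator $a_v$, and (iii) leave behind exactly the commutators $[a_u,a_v]$ for $\{u,v\}\in E(\mathcal{G})$ and nothing else. You assert this outcome rather than deriving it, and since injectivity is the entire content of the lemma, the proof as written is an accurate plan rather than a proof.

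If you want a complete argument of comparable length, it is cleaner to bypass rewriting altogether and prove directly that $C(\mathcal{H}) \cong A(\mathcal{G}) \rtimes (\Z/2)^{V(\mathcal{G})}$, where $e_u$ acts on $A(\mathcal{G})$ by inverting $g_u$ and fixing the other generators. Send $s_{(u,0)} \mapsto (1,e_u)$ and $s_{(u,1)} \mapsto (g_u, e_u)$ and check the defining relations of $C(\mathcal{H})$ using your commutation table; then send $g_u \mapsto s_{(u,1)}s_{(u,0)}$ and $e_u \mapsto s_{(u,0)}$ and check the defining relations of the semidirect product in $C(\mathcal{H})$. The two maps are mutually inverse on generators, hence isomorphisms, and under this identification $\iota$ becomes (up to the inversion automorphism of $A(\mathcal{G})$) the inclusion of the factor $A(\mathcal{G}) \times \{0\}$, which gives injectivity and the index $2^{|V(\mathcal{G})|}$ simultaneously. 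This is essentially the argument of \cite{davis2000right}, and every verification it requires is one you have already carried out in your first paragraph.
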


\begin{definition}[Salvetti complex]
A right-angled Artin group $A(\mathcal{G})$ acts on Salvetti complex $X=X(\mathcal{G})$, which consists of the following:
\begin{itemize}
\item $X^0 = A(\mathcal{G})$
\item If generators $g_{u_1}, g_{u_2}, \ldots g_{u_n}$ pairwise commute and $g \in A(\mathcal{G})$, there is a unique $n$-cube with the vertex set $\{ g (\Pi_{j \in P} g_{u_{j}}) : P \subset \{1, \ldots, n \} \}$.
\end{itemize}
The action of the right-angled group on the vertex set is by the left multiplication and it extends uniquely to the entire cube complex.
\end{definition}

For the rest of the paper whenever we talk about the action of a RACG or RAAG on a cube complex, we mean the canonical action on the associated Davis-Moussong Complex or Salvetti complex, respectively.

\subsection{Jordan's Theorem}
\begin{definition}[Primitive subgroup]
A subgroup $G < S_n$ is called \emph{primitive} if it acts transitively on $\{1, \ldots ,n \}$ and it does not preserve any nontrivial partition.
\end{definition}
If $n$ is a prime and $G$ is transitive, then the action is primitive.

Our main tool is the following.

\begin{theorem}[Jordan's Theorem] \cite[From theorems 3.3A and 3.3D]{dixon1996permutation}
For each $k>2$ there exists $N$ such that if $n>N$, $G < S_n$ is a primitive subgroup and there exists $\gamma \in G \setminus \{ e \}$, which moves less than $k$ elements,  then $G = S_n$ or $A_n$.
\end{theorem}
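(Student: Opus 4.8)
Since this is a classical theorem of Jordan I would normally just cite it, but here is the route I would take to prove it. The plan is to rephrase the conclusion in terms of the \emph{minimal degree} $m(G) := \min_{g \in G \setminus \{e\}} |\mathrm{supp}(g)|$, the least number of points moved by a nontrivial element, on the $n$-point set $\Omega$. The statement is equivalent to the assertion that $m(G) \to \infty$ as $n \to \infty$ among primitive groups $G < S_n$ with $A_n \not\le G$: an element moving fewer than $k$ points witnesses $m(G) < k$, so if bounded minimal degree forces $n$ to be bounded, then for $n$ past that bound we must have $A_n \le G$, and since $[S_n : A_n] = 2$ this leaves only $G = A_n$ or $G = S_n$. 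So the task is to bound the degree of a primitive group of fixed minimal degree that does not contain $A_n$.

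First I would reduce to an element of prime order: if $p \mid |\gamma|$ then $\gamma^{|\gamma|/p}$ is nontrivial, has order $p$, and has support inside $\mathrm{supp}(\gamma)$, so I may assume $\gamma$ is a product of $t$ disjoint $p$-cycles with $pt = |\mathrm{supp}(\gamma)| < k$. The favourable case is $t = 1$, where $\gamma$ is a single $p$-cycle with $p < k$. Once $n \ge k+3$ we have $p \le n-3$, and Jordan's theorem on $p$-cycles (a primitive group of degree $n$ containing such a cycle contains $A_n$) finishes this case directly with the threshold $n_0 = k+3$.

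The real work is the case $t > 1$, which I would attack by a descent producing an element of strictly smaller support, using primitivity as the engine. Choose $\gamma$ of \emph{minimal} support $\Delta$, $|\Delta| = m = m(G)$, and form the $G$-invariant graph on $\Omega$ in which two points are joined whenever some conjugate of $\gamma$ moves both of them; its connected components constitute a $G$-invariant partition of $\Omega$, which by primitivity is trivial, and since $\gamma \ne e$ produces at least one edge the graph is connected. If no conjugate had a proper overlap with $\Delta$, the translates $\{g\Delta\}$ would partition $\Omega$ into a nontrivial block system (nontrivial because $1 < m < n$), contradicting primitivity; hence there is a conjugate $\gamma' = g\gamma g^{-1}$ with $\emptyset \ne \Delta \cap g\Delta \subsetneq \Delta$. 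A short computation shows $[\gamma,\gamma']$ fixes every point that, together with its $\gamma$-image, lies outside $g\Delta$, so its moved points cluster around the overlap $\Delta \cap g\Delta$; when that overlap is small, $[\gamma,\gamma']$ is an element of support $< m$, contradicting minimality. Turning this qualitative mechanism into an explicit degree bound $n \le f(k)$ is exactly the content of the classical minimal-degree estimates of Jordan and Bochert, with the sharp modern forms resting on the classification of finite simple groups; and the hard part will be controlling the overlap sizes while guaranteeing that the descended commutators are genuinely nontrivial, which is the step I expect to be the main obstacle.
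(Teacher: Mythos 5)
The paper offers no proof of this statement: it is quoted verbatim as a known result and attributed to Dixon and Mortimer \cite[Theorem 3.3D]{dixon1996permutation}, so there is no in-paper argument to compare yours against. Your instinct to simply cite it is exactly what the author does.

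On its own terms, your outline follows the standard route --- essentially the one in Dixon and Mortimer --- of reformulating the claim via the minimal degree $m(G)$, reducing to an element of prime order, dispatching the single-$p$-cycle case with Jordan's theorem on primitive groups containing a $p$-cycle with $p \le n-3$, and handling the general case by an overlap/commutator descent driven by primitivity. The individual reductions you state are sound (the block-system argument needs $m < n$, which holds since $m < k \le n$ in the regime of interest; and one must rule out $[\gamma,\gamma'] = e$, which you correctly flag). But as written this is a sketch rather than a proof: the entire quantitative content of the theorem --- that a primitive group not containing $A_n$ and possessing a nontrivial element of support less than $k$ has degree bounded by a function of $k$ --- is deferred to ``the classical minimal-degree estimates of Jordan and Bochert,'' which is to say, to the very result being proved. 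One small correction: the classification of finite simple groups is not needed here; Jordan's theorem in the qualitative form stated (existence of \emph{some} bound $n(k)$) has a completely elementary proof, and CFSG enters only if one wants sharp asymptotic lower bounds on the minimal degree, such as those of Babai or Liebeck--Saxl.
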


\section{The Main Theorem and its consequences}

Our main theorem relates the combinatorics of $\mathcal{G}$ to the $\mathcal{A}$-separability of $C(\mathcal{G})$.

\begin{theorem}[Main Theorem] \label{main}
Let $\mathcal{G}$ be a non-discrete finite simplicial graph of size at least $3$.
Then all infinite-index convex-cocompact subgroups of the right-angled Coxeter group associated to $\mathcal{G}$ are $\mathcal{A}$-separable and $\mathcal{S}$-separable if and only if $\mathcal{G}^c$ is connected.
\end{theorem}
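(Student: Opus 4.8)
The statement is an equivalence, so I would treat the two implications separately.

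For the \emph{only if} direction I would argue the contrapositive. Suppose $\mathcal{G}^c$ is disconnected, with $V(\mathcal{G})$ partitioned into nonempty sets $I_1 \sqcup I_2$ spanning no edge of $\mathcal{G}^c$ between them. Then in $\mathcal{G}$ every vertex of $I_1$ is joined to every vertex of $I_2$, so the corresponding generators commute and $W = W_1 \times W_2$ with $W_j = C(\mathcal{G}[I_j])$. Since $\mathcal{G}$ has at least three vertices, one part, say $I_1$, has at least two vertices; as $\mathcal{G}^c[I_1]$ is connected it contains an edge, so $I_1$ contains two $\mathcal{G}$-nonadjacent generators spanning an infinite dihedral group, whence $W_1$, and therefore $W$, is infinite. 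Thus $\{e\}$ is an infinite-index subgroup, and it is trivially quasiconvex (take a single vertex $v_0$ as core). By Lemma \ref{product} the trivial subgroup is not $\mathcal{A}$-separable, and an analogous argument shows it is not $\mathcal{S}$-separable, so the stated separability property fails.

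For the \emph{if} direction assume $\mathcal{G}^c$ is connected, and let $H$ be an infinite-index quasiconvex subgroup with convex core $Y \subset DM(\mathcal{G})$ and $\gamma_1,\dots,\gamma_n \in W\setminus H$. Since $DM(\mathcal{G})$ is $CAT(0)$, hence special, quasiconvex subgroups of $W$ are separable, so I would first produce a finite-index subgroup $\Gamma$ with $H \le \Gamma$ and $\gamma_i \notin \Gamma$ for all $i$; geometrically this is a finite cover obtained from $Y/H$ by canonical completion. Consider the coset action $f_\Gamma : W \to \mathrm{Sym}(\Gamma\backslash W)$ on $m = [W:\Gamma]$ points. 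Taking $\Gamma$ itself as the basepoint coset, $f_\Gamma(H)$ fixes it (because $H\le\Gamma$) while each $f_\Gamma(\gamma_i)$ moves it (because $\gamma_i\notin\Gamma$), whence $f_\Gamma(\gamma_i)\notin f_\Gamma(H)$. So any such coset action already separates the $\gamma_i$ from $H$, and the entire task reduces to arranging that its image is a full alternating (resp.\ symmetric) group. I would do this through Jordan's Theorem, which needs three things of the transitive image: primitivity, a nontrivial element of boundedly small support, and—to land in $A_m$ rather than $S_m$—control of parity. The small-support element I would build by a local surgery on the finite cover that exploits connectivity of $\mathcal{G}^c$: because $W$ is not a direct product there is a non-commuting pair $s,t$ spanning an infinite dihedral subgroup, giving room to modify the cover near a single vertex so that a fixed short word (for instance one acting as a single $3$-cycle) has support of size independent of $m$, while leaving the separation of the $\gamma_i$ intact and letting me take $m$ as large as Jordan's Theorem demands. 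Primitivity I would secure either by forcing the degree $m$ to be prime, so that transitivity alone suffices as noted after the definition of primitivity, or by a direct argument that the enlarged $\Gamma$ admits no block system. Finally, non-discreteness of $\mathcal{G}$ provides an edge, i.e.\ a commuting pair of generators whose square structure lets me pair up the transpositions of each involutory generator so that every generator acts evenly, placing the image in $A_m$; Jordan's Theorem then forces equality, giving $\mathcal{A}$-separability, whereas introducing instead one odd generator yields image $S_m$ and $\mathcal{S}$-separability.

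The hard part will be the combined surgery: producing a uniformly small-support even element while simultaneously guaranteeing primitivity and the correct global parity, all without destroying $H\le\Gamma$ or reintroducing any $\gamma_i$. This is precisely where connectivity of $\mathcal{G}^c$ is indispensable, since Lemma \ref{product} shows that when $\mathcal{G}^c$ is disconnected $W$ is a direct product and no such construction can exist; the argument must therefore genuinely use the non-commuting links of $\mathcal{G}^c$ to spread the support of the small element across the generators and to break any product-like block structure. Checking that a single bounded modification realizes all three geometric conditions at once, and that the parity bookkeeping closes up over the whole large cover, is the technical heart of the proof.
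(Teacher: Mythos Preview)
Your high-level strategy matches the paper's: the contrapositive for the only-if direction via the direct-product splitting and Lemma~\ref{product}, and for the if direction a coset action on which Jordan's Theorem is applied after arranging prime degree, a bounded-support element, and parity control. Where your proposal falls short is that it does not identify the concrete mechanism that realises these three conditions simultaneously, and your proposed surgeries are either vague or slightly misdirected.

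The paper's engine is the \emph{deletion} operation: starting from the core $Y$, one removes a $G$-orbit of bounding hyperplanes to obtain a larger core $Y'$, and iterating this along a path in $DM(\mathcal{G})$ grows a \emph{tail}. Connectivity of $\mathcal{G}^c$ is used (Lemma~\ref{reduce}) to reduce the vertebra to a single vertex, after which one appends a long path alternately labelled $s_1,s_2$. The finite-index overgroup $\Gamma_1$ is then Haglund's reflection group $\langle H,\Gamma_0\rangle$ rather than a canonical completion, and the index is exactly the number of $H$-orbits of vertices in the enlarged core, which can be made prime by choosing the tail length. The bounded-support element is not a short word acting as a $3$-cycle as you suggest, but simply any \emph{third generator} $s_3$: it fixes every coset corresponding to a tail vertex (since the tail uses only labels $s_1,s_2$), so its support is bounded by $|H\backslash Y|$ independently of the tail length.

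Your parity idea---using a commuting pair to ``pair up transpositions''---is too optimistic. The paper devotes a separate section to this: parity of each generator is adjusted by replacing short segments of the tail by carefully chosen alternative paths (Lemmas~\ref{parity} and~\ref{squares}), with a three-way case analysis depending on which generators commute with $s_1,s_2$. Non-discreteness of $\mathcal{G}$ enters only in one of those cases. So while your outline is correct in spirit, the actual proof requires the deletion/tail machinery you have not described, and the parity bookkeeping is substantially more delicate than a single square.
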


Recall that here convex-cocompact means that it acts cocompactly on a convex subcomplex of the Davis-Moussong complex. A similar result holds for RAAGs.

\begin{corollary} \label{AsepRAAG}
Let $\mathcal{G}$ be a finite simplicial graph of size at least $2$.
Then all infinite index convex-cocompact subgroups of the right-angled Artin group associated to $\mathcal{G}$ are $\mathcal{A}$-separable if and only if $\mathcal{G}^c$ is connected.
\end{corollary}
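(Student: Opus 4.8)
\section*{Proof proposal for Corollary \ref{AsepRAAG}}

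The plan is to deduce the corollary from the Main Theorem (Theorem \ref{main}) by passing through the finite-index embedding $\iota : A(\mathcal{G}) \hookrightarrow C(\mathcal{H})$ of Lemma \ref{RAAGtoRACG}, where $\mathcal{H}$ is the doubled graph. Both implications reduce to the corresponding statements for the right-angled Coxeter group $C(\mathcal{H})$: the forward direction is transported down the finite-index inclusion via Lemma \ref{fi}, while the reverse direction is handled by a join decomposition together with Lemma \ref{product}. The two things to establish are a combinatorial identity relating $\mathcal{H}^c$ to $\mathcal{G}^c$ and the transfer of quasiconvexity across the inclusion.

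For the forward direction, assume $\mathcal{G}^c$ is connected, and first compute $\mathcal{H}^c$. Reading off the adjacencies in $\mathcal{H}$: the vertices $(u,1)$ span a copy of $\mathcal{G}$, hence a copy of $\mathcal{G}^c$ in $\mathcal{H}^c$; a vertex $(u,0)$ is adjacent in $\mathcal{H}$ to every $(v,\epsilon)$ with $v \neq u$ and is non-adjacent to $(u,1)$, so in $\mathcal{H}^c$ the vertex $(u,0)$ is a pendant attached precisely to $(u,1)$, and the $(u,0)$ form an independent set. Thus $\mathcal{H}^c$ is obtained from $\mathcal{G}^c$ by attaching one pendant edge to each vertex, so $\mathcal{H}^c$ is connected if and only if $\mathcal{G}^c$ is. Since $|\mathcal{H}| = 2|\mathcal{G}| \geq 4$ and $\mathcal{H}$ manifestly has edges (e.g.\ $\{(u,0),(v,0)\}$ for $u \neq v$), the hypotheses of Theorem \ref{main} are met, so every infinite-index quasiconvex subgroup of $C(\mathcal{H})$ is $\mathcal{A}$- and $\mathcal{S}$-separable.

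Now let $K$ be an infinite-index quasiconvex subgroup of $A(\mathcal{G})$. Its index in $C(\mathcal{H})$ is still infinite, being the product of $[C(\mathcal{H}):A(\mathcal{G})] < \infty$ with $[A(\mathcal{G}):K] = \infty$. Granting that $K$ is also quasiconvex in $C(\mathcal{H})$, Theorem \ref{main} shows $K$ is $\mathcal{A}$-separable in $C(\mathcal{H})$, and then Lemma \ref{fi} (with $G = C(\mathcal{H})$, $H = A(\mathcal{G})$) yields that $K$ is $\mathcal{A}$-separable in $A(\mathcal{G})$. The main obstacle is exactly this transfer of quasiconvexity: the definition in use requires $K$ to act cocompactly on a convex subcomplex, and one must pass from the Salvetti complex $X(\mathcal{G})$ to the Davis--Moussong complex $DM(\mathcal{H})$, which are different models. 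I would argue that convex cocompactness for a group acting geometrically on a $CAT(0)$ cube complex is a commensurability invariant, so a subgroup convex-cocompact in $X(\mathcal{G})$ (on which $A(\mathcal{G})$ acts geometrically) remains convex-cocompact in $DM(\mathcal{H})$ (on which $C(\mathcal{H})$, and hence its finite-index subgroup $A(\mathcal{G})$, acts geometrically), appealing to the theory of \cite{haglund2008finite}. Checking that the notion of quasiconvexity in the Definition is genuinely robust under this change of model is the step requiring the most care.

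For the reverse direction, assume $\mathcal{G}^c$ is disconnected, say $V(\mathcal{G}) = V_1 \sqcup V_2$ with $V_1, V_2$ nonempty and no edges of $\mathcal{G}^c$ between them; equivalently $\mathcal{G}$ is the join of its two induced subgraphs and $A(\mathcal{G}) \cong A(\mathcal{G}[V_1]) \times A(\mathcal{G}[V_2])$, a direct product of two nontrivial finitely generated groups. The trivial subgroup $\{e\}$ is quasiconvex (it acts cocompactly on a single vertex, which is convex) and of infinite index, yet by Lemma \ref{product} it is not $\mathcal{A}$-separable. This exhibits an infinite-index quasiconvex subgroup that fails to be $\mathcal{A}$-separable, completing the contrapositive.
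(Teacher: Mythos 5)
Your overall architecture matches the paper's: the reverse direction via the join decomposition $A(\mathcal{G}) \cong A(\mathcal{G}[V_1]) \times A(\mathcal{G}[V_2])$ and Lemma \ref{product} applied to $\{e\}$ is exactly the paper's argument, and your forward direction follows the same route (pass to $C(\mathcal{H})$ via Lemma \ref{RAAGtoRACG}, check that $\mathcal{H}^c$ is connected, invoke the Main Theorem for $C(\mathcal{H})$, descend by Lemma \ref{fi}). Your explicit description of $\mathcal{H}^c$ as $\mathcal{G}^c$ with a pendant edge attached to each vertex is correct and in fact slightly sharper than what the paper records, and your verification of the hypotheses of Theorem \ref{main} for $\mathcal{H}$ is fine.

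However, there is a genuine gap at the step you yourself flag: the transfer of quasiconvexity from the Salvetti complex $X(\mathcal{G})$ to the Davis--Moussong complex $DM(\mathcal{H})$. You propose to handle it by asserting that convex cocompactness is a commensurability invariant for groups acting geometrically on $CAT(0)$ cube complexes, citing \cite{haglund2008finite} in spirit. No such general statement is available to cite: the definition of quasiconvexity in use here (cocompact action on a combinatorially convex subcomplex) is a priori tied to the particular cube complex, and since $A(\mathcal{G})$ need not be hyperbolic you cannot fall back on intrinsic quasiconvexity either. This transfer is precisely the content of Lemma \ref{qcinRACGtoqcinRAAG}, whose proof is not formal: one builds an explicit $A(\mathcal{G})$-equivariant isomorphism $f'$ from $X(\mathcal{G})$ to the quotient $Y$ of $DM(\mathcal{H})$ obtained by simultaneously collapsing all hyperplanes labelled $s_{(v,0)}$, sets $W = f^{-1}f'(Z)$ for a core $Z$ of $K$ in $X(\mathcal{G})$, and then verifies convexity of $W$ by tracking how the collapse maps hyperplanes to hyperplanes (the key point being $f(H(e)) = H(f(e))$ for edges labelled $s_{(v,1)}$). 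Without this, or an argument of comparable substance, the forward direction of your proof does not go through; the rest of the proposal is correct.
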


Here convex-cocompact means that the subgroup acts cocompactly on a convex subcomplex of the Salvetti complex.There is another action of the Artin group on a cube complex given by embedding the group in right-angled Coxeter group as described in the Lemma \ref{RAAGtoRACG}. We will first show that convex-cocompactness with respect to the Salvetti complex implies convex-cocompactness with respect to the Davis-Moussong complex.

\begin{lemma} \label{qcinRACGtoqcinRAAG}
Suppose $\mathcal{G}$ is a simplicial complex, and $K$ a convex-cocompact subgroup of $A(\mathcal{G})$ with respect to the action on $X(\mathcal{G})$. Let $\mathcal{H}$ be as in Lemma \ref{RAAGtoRACG} and identify $A(\mathcal{G})$ with a subgroup of $C(\mathcal{H})$ in the same lemma. Then $K$ is convex-cocompact in $C(\mathcal{H})$ with respect to the action on $DM(\mathcal{H})$.
\end{lemma}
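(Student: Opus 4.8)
The plan is to build, from the given convex core, an explicit convex invariant subcomplex of $DM(\mathcal{H})$ and to show the action on it is cocompact. Fix a basepoint and write $X(\mathcal{G})^0 = A(\mathcal{G})$, and use the inclusion $\iota$ of Lemma \ref{RAAGtoRACG} to regard $A(\mathcal{G})$ as a subset of $C(\mathcal{H}) = DM(\mathcal{H})^0$. Let $Y \subseteq X(\mathcal{G})$ be a convex subcomplex on which $K$ acts cocompactly; then $Y^0$ is a finite union of $K$-orbits, say $Y^0 = K\cdot F$ with $F$ finite. I would then define $Z$ to be the intersection of all half-spaces of $DM(\mathcal{H})$ that contain $\iota(Y^0)$. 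By the facts quoted after the definition of half-space, $Z$ is an intersection of half-spaces and hence a convex subcomplex, and it is $K$-invariant because $\iota(Y^0)$ is $K$-invariant and $K$ permutes the half-spaces. Everything then reduces to showing that $K$ acts cocompactly on $Z$.

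Since $DM(\mathcal{H})$ is uniformly locally finite, it suffices to prove that only finitely many $K$-orbits of hyperplanes cross $Z$; for a convex subcomplex this is equivalent to $K$-cocompactness, and to a uniform bound on the Hausdorff distance between $Z$ and any $K$-orbit. Moreover a hyperplane crosses $Z$ exactly when it separates two vertices of $\iota(Y^0)$: if $\iota(Y^0)$ lay in a single half-space $h$, then $Z \subseteq h$ by construction, so a crossing hyperplane must have vertices of $\iota(Y^0)$ on both sides, and conversely any such hyperplane crosses $Z$ because $\iota(Y^0) \subseteq Z$. So the whole problem reduces to controlling, up to the $K$-action, the hyperplanes of $DM(\mathcal{H})$ that separate $\iota(Y^0)$.

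The key is a correspondence between the walls of the two complexes coming from the defining relation $\iota(g_u) = s_{(u,0)}s_{(u,1)}$. Each hyperplane of $DM(\mathcal{H})$ carries a label $(u,\epsilon)$ — the common label of the edges of its dual wall — and the $\iota$-image of a $g_u$-edge of $X(\mathcal{G})$ is a length-two path in $DM(\mathcal{H})$ crossing one $(u,0)$-hyperplane and then one $(u,1)$-hyperplane. Tracking an $X(\mathcal{G})^{(1)}$-geodesic between two vertices of $Y^0$ and its $\iota$-image, I would show that a hyperplane of $DM(\mathcal{H})$ separates two vertices of $\iota(Y^0)$ only if the corresponding wall of $X(\mathcal{G})$ separates the two original vertices of $Y^0$, and that this correspondence is boundedly-to-one. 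Because $Y$ is convex and $K$-cocompact in $X(\mathcal{G})$, a wall of $X(\mathcal{G})$ separates two vertices of $Y^0$ precisely when it crosses $Y$, and only finitely many $K$-orbits of walls cross $Y$; transporting this through the correspondence bounds the number of $K$-orbits of hyperplanes of $DM(\mathcal{H})$ crossing $Z$, which by the previous paragraph gives cocompactness.

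The main obstacle is exactly this wall-correspondence step, and it is what prevents a purely soft argument: combinatorial convex hulls in $CAT(0)$ cube complexes are not quasi-isometry invariants — the ``diagonal'' cyclic subgroup of $\Z^2$ has unbounded convex hull in the standard cubulation — so the finite-index inclusion $A(\mathcal{G}) \hookrightarrow C(\mathcal{H})$ cannot by itself transfer convex-cocompactness. What rescues the statement is that $\iota$ is compatible with the two cubical structures: each wall of $X(\mathcal{G})$ is carried to a consecutive parallel pair of walls of $DM(\mathcal{H})$, so the wall pattern seen along $\iota(A(\mathcal{G}))$ is a bounded ``doubling'' of the wall pattern of $X(\mathcal{G})$. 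Making precise the phrases ``separates only if,'' ``boundedly-to-one,'' and in particular checking that the extra hyperplanes of $DM(\mathcal{H})$ not arising from the correspondence remain within bounded distance of $\iota(A(\mathcal{G}))$ and do not enlarge the hull, is where the real work lies.
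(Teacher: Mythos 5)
Your approach is genuinely different from the paper's: you form the cubical convex hull $Z$ of $\iota(Y^0)$ inside $DM(\mathcal{H})$ and try to control it by counting $K$-orbits of separating hyperplanes. The paper instead collapses the neighbourhoods $N(H)$ of all hyperplanes labelled $s_{(v,0)}$, obtains an $A(\mathcal{G})$-equivariant isomorphism $f'$ from $X(\mathcal{G})$ to the quotient, and takes the $f$-preimage of the $f'$-image of the given core. There cocompactness is immediate (point-preimages of the collapse lie in single cubes) and convexity is a short hyperplane argument using that the collapse sends parallel edges to parallel edges; no convex hull of an orbit is ever formed, which is exactly the difficulty your route runs into.

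The gap is concrete: the reduction in your second paragraph --- that for a convex subcomplex, having only finitely many $K$-orbits of crossing hyperplanes ``is equivalent to $K$-cocompactness'' --- is false, and your own example in the last paragraph refutes it. For the diagonal $\Z \le \Z^2$, the cubical convex hull of the orbit of the origin is all of $\mathbb{R}^2$ (no half-space of the standard cubulation contains the whole orbit), and it is crossed by exactly two $\Z$-orbits of hyperplanes (the vertical ones and the horizontal ones), yet the action is not cocompact: cocompactness needs finitely many orbits of \emph{vertices}, and the vertex count is governed by which of the crossing hyperplanes pairwise intersect, not merely by how many orbits of them there are. So even if you complete the two-to-one wall correspondence (which is plausible --- $\iota$ takes reduced words to reduced words, and each wall of $X(\mathcal{G})$ separating $a$ from $b$ yields one $(u,0)$- and one $(u,1)$-hyperplane separating $\iota(a)$ from $\iota(b)$), the conclusion you draw from it does not yield cocompactness. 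What you actually need is the uniform Hausdorff-distance bound you mention only in passing --- that every vertex of $Z$ is uniformly close to $\iota(Y^0)$ --- and that is precisely the step you defer as ``where the real work lies.'' As written, the argument both rests on a false equivalence and leaves the substantive content of the lemma unproved.
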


\begin{proof}
Recall that $N(H)$ is the union of all cubes intersecting a hyperplane $H$. For a hyperplane $H$ in a $CAT(0)$ cube complex $X$, $N(H) \simeq H \times [0,1]$. We can collapse $N(H)$ onto $H$. Formally, say $(x,t) \sim (x, t')$ for all $x \in H$ and $t,t' \in [0,1]$. \emph{Collapse of neighbourhood of $H$} is the quotient map $X \longrightarrow X / \sim$.
We can collapse multiple neighbourhoods simultaneously by quotienting by the smallest equivalence relation, which contains the equivalence relation for each hyperplane.

Let $v_0$ be a specified vertex in the Davis-Moussong complex, which under the bijection between vertices and group elements corresponds to the identity. Let $f: (DM(\mathcal{H}), v_0) \longrightarrow (Y,y)$ be the simultaneous collapse of all hyperplanes labelled by $s_{(v,0)}$ for all $v \in \mathcal{G}$. See Figure \ref{tree}. Here, the base point $y$ is the image of $v_0$.  The equivalence relation commutes with the action of $C(\mathcal{H})$, so there is an induced action of $C(\mathcal{H})$ on $Y$. 

We collapsed all edges with labels from $\mathcal{G} \times \{0\}$ so for all $s_{(v,0)}$ and all $g \in C(\mathcal{H})$, we have $gs_{(v,0)}.y = g.y$.
%Push forward the edge labels from $DM(\mathcal{H})$ to $Y$. 

Let $f': X(\mathcal{G}) \longrightarrow Y$ be defined as follows
\begin{itemize}
\item Vertices: Send $g$ to $g.y$.
\item Edges: Send the edge between $g$ and $gg_{v}$ to the edge between $g.y$ and $gg_{v}.y$. It is indeed an edge as $g.y = gs_{(v,0)}.y$ and $gg_v. y =gs_{(v,0)}s_{(v,1)}.y$
\item Squares: Send the square with vertices $g, gg_{v}, gg_u, gg_u g_v$ to the square with vertices $g.y, gg_{v}.y, gg_u.y, gg_u g_v.y$.
\item Higher dimensions: Extend analogously.
\end{itemize}

The right-angled Artin group $A(\mathcal{G})$ acts on $Y$ by $g.(h.y) = gh.y$. The map $f'$ is $A(\mathcal{G})$-equivariant cube complex isomorphism since $g.f'(h)= g. h. y = gh.y = f'(gh)$.

No two hyperplanes of $C(\mathcal{H})$ labelled $s_{(u,0)}$ and $s_{(v,0)}$ osculate since either the neighbourhoods of the associated hyperplanes do not intersect or $u$ is distinct from $v$, $(u,0)$ is connected to $(v,0)$ and the associated hyperplanes intersect.

 I want to prove that if $K$ acts cocompactly on $Z$ a convex subcomplex of $X(\mathcal{G})$, then it acts cocompactly on $W:= f^{-1} f' (Z) \subset DM(\mathcal{H})$. The collapsing map $f$ sends cubes to cubes (of potentially lower dimension), therefore $W$ is a cube complex. To prove cocompactness, it is enough to show that every vertex $y \in Y$ has finitely many vertices in its preimage under $f$. Suppose $x$ and $x'$ are vertices of $DM(\mathcal{H})$ and that they both map to $y$.
Then there is some sequence $H_1, \ldots H_k$ of hyperplanes with labels from $\mathcal{G} \times 0$ and vertices $x_1, \ldots, x_{k+1}$ such that $x_1 = x$, $x_{k+1} = x'$ and $x_i$ maps to the same element as $x_{i+1}$ under the collapse of $H_i$ for all $i$. But then $N(H_i)$ and $N(H_{i+1})$ intersect and as they do not osculate, $H_i$ and $H_{i+1}$ intersect. Since they do not interosculate, $x_{i-1}, x_i$ and $x_{i+1}$ are successive vertices in some square. But now $x_{i+1} \in N(H_{i-1})$ and by induction $H_i$ intersects $H_j$ whenever $i \neq j$. Therefore $H_1, \ldots, H_k$ have distinct labels and $k \leq |\mathcal{G}|$ and the preimage of $y \in Y$ contains at most $2^{|\mathcal{G}|}$ vertices.

%Let $v$ be a vertex of the smallest cube containing $x_i$. Since $x_i$ is in both $N(H_i)$ and $N(H_{i+1})$, so is $v$. The line segment from $x_i$ to $x_{i+1}$ lies entirely in $N(H_i)$ as otherwise there would be a `missing cube' at $v$ contradicting non-positive curvature of the Davis-Moussong complex. By induction any two hyperplanes $H_i$ and $H_j$ intersect. In particular, the set of $x'$ with $f(x') = f(x)$ belong to the cube $\cap N(H)$ where $H$ goes over all hyperplanes with $x \in N(H)$ with label from $\mathcal{G} \times 0$. This is the same as saying that for any $y \in Y$,  $(f')^{-1}(y)$ is contained in a cube of dimension at most $|V(\mathcal{G})|$. Suppose $K$ acts cocompactly on a convex subcomplex $Z$ of $X(\mathcal{G})$. Then $K$ acts cocompactly on $W:= f^{-1} f'(Z) \subset DM(\mathcal{H})$.

It remains to show that $W$ is convex. Let $e$ be an edge in $DM(\mathcal{H})$ with exactly one endpoint in $W$. The edge $e$ is labelled by some $s_{(v,1)}$ as all edges labelled by $s_{(v,0)}$ either lie entirely in $W$ or have an empty intersection with it.
The collapsing map sends parallel edges to parallel edges (unless it sends them both to a vertex) and any sequence of elementary parallelisms in codomain lifts to the domain, so $f(H(e)) = H(f(e))$. In particular, if $H(e)$ intersects $W$, then $H(f(e))$ intersects $Z$ and by the convexity of $Z$, $f(e)$ lies entirely in $Z$, which contradicts that $e$ does not lie entirely in $W$.

So convex-cocompactness with respect to the action on $X(\mathcal{G})$ implies convex-cocompactness with respect to the action on $DM(\mathcal{H})$.
\end{proof}

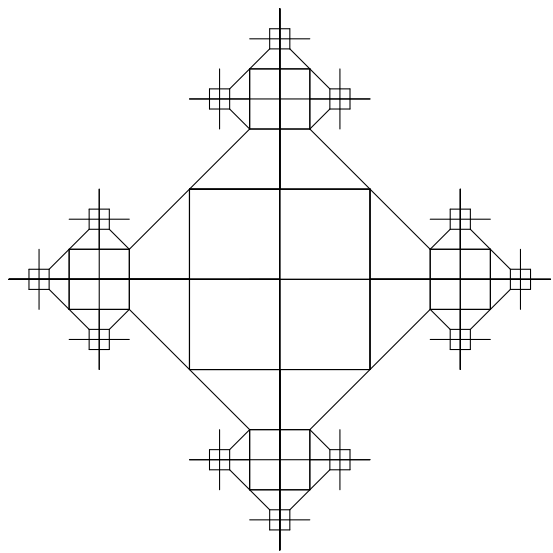
\begin{figure}
\centering
\begin{tikzpicture}[line cap=round,line join=round,>=triangle 45,x=1.0cm,y=1.0cm, scale=1/5]
\clip(-29.91,-23.18) rectangle (27.36,21.97);
\draw (-8,-2)-- (10,-2);
\draw (-2,4)-- (-2,-2);
\draw (-8,-2)-- (-8,16);
\draw (-2,4)-- (-8,4);
\draw (-8,-2)-- (-26,-2);
\draw (-14,4)-- (-14,-2);
\draw (-14,4)-- (-8,4);
\draw (-2,-8)-- (-2,-2);
\draw (-8,-2)-- (-8,-20);
\draw (-2,-8)-- (-8,-8);
\draw (-8,-2)-- (-26,-2);
\draw (-14,-8)-- (-14,-2);
\draw (-14,-8)-- (-8,-8);
\draw (4,-2)-- (10,-2);
\draw (6,0)-- (6,-2);
\draw (4,-2)-- (4,4);
\draw (6,0)-- (4,0);
\draw (4,-2)-- (-2,-2);
\draw (2,0)-- (2,-2);
\draw (2,0)-- (4,0);
\draw (6,-4)-- (6,-2);
\draw (4,-2)-- (4,-8);
\draw (6,-4)-- (4,-4);
\draw (4,-2)-- (-2,-2);
\draw (2,-4)-- (2,-2);
\draw (2,-4)-- (4,-4);
\draw (-2,-2)-- (10,-2);
\draw (4,2)-- (6,2);
\draw (4.67,2.67)-- (4.67,2);
\draw (4,2)-- (4,4);
\draw (4.67,2.67)-- (4,2.67);
\draw (3.33,2.67)-- (3.33,2);
\draw (3.33,2.67)-- (4,2.67);
\draw (4.67,1.33)-- (4.67,2);
\draw (4,2)-- (4,0);
\draw (4.67,1.33)-- (4,1.33);
\draw (3.33,1.33)-- (3.33,2);
\draw (3.33,1.33)-- (4,1.33);
\draw (-10,-2)-- (8,-2);
\draw (2,-2)-- (8,-2);
\draw (2,2)-- (4,2);
\draw (8.67,-1.33)-- (8.67,-2);
\draw (8,-2)-- (8,0);
\draw (8.67,-1.33)-- (8,-1.33);
\draw (7.33,-1.33)-- (7.33,-2);
\draw (7.33,-1.33)-- (8,-1.33);
\draw (-2,4)-- (2,0);
\draw (6,0)-- (7.33,-1.33);
\draw (6,0)-- (4.67,1.33);
\draw (2,0)-- (3.33,1.33);
\draw (-8,-2)-- (-8,16);
\draw (-2,4)-- (-8,4);
\draw (-2,4)-- (-2,-2);
\draw (-8,10)-- (-8,16);
\draw (-6,12)-- (-8,12);
\draw (-8,10)-- (-2,10);
\draw (-6,12)-- (-6,10);
\draw (-8,10)-- (-8,4);
\draw (-6,8)-- (-8,8);
\draw (-6,8)-- (-6,10);
\draw (-10,12)-- (-8,12);
\draw (-10,12)-- (-10,10);
\draw (-8,10)-- (-8,4);
\draw (-10,8)-- (-8,8);
\draw (-10,8)-- (-10,10);
\draw (-8,4)-- (-8,16);
\draw (-4,10)-- (-4,12);
\draw (-3.33,10.67)-- (-4,10.67);
\draw (-4,10)-- (-2,10);
\draw (-3.33,10.67)-- (-3.33,10);
\draw (-3.33,9.33)-- (-4,9.33);
\draw (-3.33,9.33)-- (-3.33,10);
\draw (-4.67,10.67)-- (-4,10.67);
\draw (-4,10)-- (-6,10);
\draw (-4.67,10.67)-- (-4.67,10);
\draw (-4.67,9.33)-- (-4,9.33);
\draw (-4.67,9.33)-- (-4.67,10);
\draw (-4,8)-- (-4,10);
\draw (-7.33,14.67)-- (-8,14.67);
\draw (-8,14)-- (-6,14);
\draw (-7.33,14.67)-- (-7.33,14);
\draw (-7.33,13.33)-- (-8,13.33);
\draw (-7.33,13.33)-- (-7.33,14);
\draw (-2,4)-- (-6,8);
\draw (-6,12)-- (-7.33,13.33);
\draw (-6,12)-- (-4.67,10.67);
\draw (-6,8)-- (-4.67,9.33);
\draw (-8,-2)-- (-26,-2);
\draw (-14,4)-- (-14,-2);
\draw (-14,4)-- (-8,4);
\draw (-20,-2)-- (-26,-2);
\draw (-22,0)-- (-22,-2);
\draw (-20,-2)-- (-20,4);
\draw (-22,0)-- (-20,0);
\draw (-20,-2)-- (-14,-2);
\draw (-18,0)-- (-18,-2);
\draw (-18,0)-- (-20,0);
\draw (-22,-4)-- (-22,-2);
\draw (-22,-4)-- (-20,-4);
\draw (-20,-2)-- (-14,-2);
\draw (-18,-4)-- (-18,-2);
\draw (-18,-4)-- (-20,-4);
\draw (-14,-2)-- (-26,-2);
\draw (-20,2)-- (-22,2);
\draw (-20.67,2.67)-- (-20.67,2);
\draw (-20,2)-- (-20,4);
\draw (-20.67,2.67)-- (-20,2.67);
\draw (-19.33,2.67)-- (-19.33,2);
\draw (-19.33,2.67)-- (-20,2.67);
\draw (-20.67,1.33)-- (-20.67,2);
\draw (-20,2)-- (-20,0);
\draw (-20.67,1.33)-- (-20,1.33);
\draw (-19.33,1.33)-- (-19.33,2);
\draw (-19.33,1.33)-- (-20,1.33);
\draw (-18,2)-- (-20,2);
\draw (-24.67,-1.33)-- (-24.67,-2);
\draw (-24,-2)-- (-24,0);
\draw (-24.67,-1.33)-- (-24,-1.33);
\draw (-23.33,-1.33)-- (-23.33,-2);
\draw (-23.33,-1.33)-- (-24,-1.33);
\draw (-14,4)-- (-18,0);
\draw (-22,0)-- (-23.33,-1.33);
\draw (-22,0)-- (-20.67,1.33);
\draw (-18,0)-- (-19.33,1.33);
\draw (-8,-2)-- (-8,16);
\draw (-14,4)-- (-8,4);
\draw (-14,4)-- (-14,-2);
\draw (-8,10)-- (-8,16);
\draw (-10,12)-- (-8,12);
\draw (-8,10)-- (-14,10);
\draw (-10,12)-- (-10,10);
\draw (-8,10)-- (-8,4);
\draw (-10,8)-- (-8,8);
\draw (-10,8)-- (-10,10);
\draw (-6,12)-- (-8,12);
\draw (-6,12)-- (-6,10);
\draw (-8,10)-- (-8,4);
\draw (-6,8)-- (-8,8);
\draw (-6,8)-- (-6,10);
\draw (-8,4)-- (-8,16);
\draw (-12,10)-- (-12,12);
\draw (-12.67,10.67)-- (-12,10.67);
\draw (-12,10)-- (-14,10);
\draw (-12.67,10.67)-- (-12.67,10);
\draw (-12.67,9.33)-- (-12,9.33);
\draw (-12.67,9.33)-- (-12.67,10);
\draw (-11.33,10.67)-- (-12,10.67);
\draw (-12,10)-- (-10,10);
\draw (-11.33,10.67)-- (-11.33,10);
\draw (-11.33,9.33)-- (-12,9.33);
\draw (-11.33,9.33)-- (-11.33,10);
\draw (-12,8)-- (-12,10);
\draw (-8.67,14.67)-- (-8,14.67);
\draw (-8,14)-- (-10,14);
\draw (-8.67,14.67)-- (-8.67,14);
\draw (-8.67,13.33)-- (-8,13.33);
\draw (-8.67,13.33)-- (-8.67,14);
\draw (-14,4)-- (-10,8);
\draw (-10,12)-- (-8.67,13.33);
\draw (-10,12)-- (-11.33,10.67);
\draw (-10,8)-- (-11.33,9.33);
\draw (-2,-8)-- (-2,-2);
\draw (-8,-2)-- (-8,-20);
\draw (-2,-8)-- (-8,-8);
\draw (-8,-2)-- (-26,-2);
\draw (-14,-8)-- (-14,-2);
\draw (-14,-8)-- (-8,-8);
\draw (-2,4)-- (-2,-2);
\draw (-2,4)-- (-8,4);
\draw (-8,-2)-- (-26,-2);
\draw (-14,4)-- (-14,-2);
\draw (-14,4)-- (-8,4);
\draw (4,-2)-- (10,-2);
\draw (6,-4)-- (6,-2);
\draw (4,-2)-- (4,-8);
\draw (6,-4)-- (4,-4);
\draw (4,-2)-- (-2,-2);
\draw (2,-4)-- (2,-2);
\draw (2,-4)-- (4,-4);
\draw (6,0)-- (6,-2);
\draw (4,-2)-- (4,4);
\draw (6,0)-- (4,0);
\draw (4,-2)-- (-2,-2);
\draw (2,0)-- (2,-2);
\draw (2,0)-- (4,0);
\draw (-2,-2)-- (10,-2);
\draw (4,-6)-- (6,-6);
\draw (4.67,-6.67)-- (4.67,-6);
\draw (4,-6)-- (4,-8);
\draw (4.67,-6.67)-- (4,-6.67);
\draw (3.33,-6.67)-- (3.33,-6);
\draw (3.33,-6.67)-- (4,-6.67);
\draw (4.67,-5.33)-- (4.67,-6);
\draw (4,-6)-- (4,-4);
\draw (4.67,-5.33)-- (4,-5.33);
\draw (3.33,-5.33)-- (3.33,-6);
\draw (3.33,-5.33)-- (4,-5.33);
\draw (2,-6)-- (4,-6);
\draw (8.67,-2.67)-- (8.67,-2);
\draw (8,-2)-- (8,-4);
\draw (8.67,-2.67)-- (8,-2.67);
\draw (7.33,-2.67)-- (7.33,-2);
\draw (7.33,-2.67)-- (8,-2.67);
\draw (-2,-8)-- (2,-4);
\draw (6,-4)-- (7.33,-2.67);
\draw (6,-4)-- (4.67,-5.33);
\draw (2,-4)-- (3.33,-5.33);
\draw (-8,-2)-- (-8,-20);
\draw (-2,-8)-- (-8,-8);
\draw (-2,-8)-- (-2,-2);
\draw (-8,-14)-- (-8,-20);
\draw (-6,-16)-- (-8,-16);
\draw (-8,-14)-- (-2,-14);
\draw (-6,-16)-- (-6,-14);
\draw (-8,-14)-- (-8,-8);
\draw (-6,-12)-- (-8,-12);
\draw (-6,-12)-- (-6,-14);
\draw (-10,-16)-- (-8,-16);
\draw (-10,-16)-- (-10,-14);
\draw (-8,-14)-- (-8,-8);
\draw (-10,-12)-- (-8,-12);
\draw (-10,-12)-- (-10,-14);
\draw (-8,-8)-- (-8,-20);
\draw (-4,-14)-- (-4,-16);
\draw (-3.33,-14.67)-- (-4,-14.67);
\draw (-4,-14)-- (-2,-14);
\draw (-3.33,-14.67)-- (-3.33,-14);
\draw (-3.33,-13.33)-- (-4,-13.33);
\draw (-3.33,-13.33)-- (-3.33,-14);
\draw (-4.67,-14.67)-- (-4,-14.67);
\draw (-4,-14)-- (-6,-14);
\draw (-4.67,-14.67)-- (-4.67,-14);
\draw (-4.67,-13.33)-- (-4,-13.33);
\draw (-4.67,-13.33)-- (-4.67,-14);
\draw (-4,-12)-- (-4,-14);
\draw (-7.33,-18.67)-- (-8,-18.67);
\draw (-8,-18)-- (-6,-18);
\draw (-7.33,-18.67)-- (-7.33,-18);
\draw (-7.33,-17.33)-- (-8,-17.33);
\draw (-7.33,-17.33)-- (-7.33,-18);
\draw (-2,-8)-- (-6,-12);
\draw (-6,-16)-- (-7.33,-17.33);
\draw (-6,-16)-- (-4.67,-14.67);
\draw (-6,-12)-- (-4.67,-13.33);
\draw (-8,-2)-- (-26,-2);
\draw (-14,-8)-- (-14,-2);
\draw (-14,-8)-- (-8,-8);
\draw (-20,-2)-- (-26,-2);
\draw (-22,-4)-- (-22,-2);
\draw (-20,-2)-- (-20,-8);
\draw (-22,-4)-- (-20,-4);
\draw (-20,-2)-- (-14,-2);
\draw (-18,-4)-- (-18,-2);
\draw (-18,-4)-- (-20,-4);
\draw (-22,0)-- (-22,-2);
\draw (-22,0)-- (-20,0);
\draw (-20,-2)-- (-14,-2);
\draw (-18,0)-- (-18,-2);
\draw (-18,0)-- (-20,0);
\draw (-14,-2)-- (-26,-2);
\draw (-20,-6)-- (-22,-6);
\draw (-20.67,-6.67)-- (-20.67,-6);
\draw (-20,-6)-- (-20,-8);
\draw (-20.67,-6.67)-- (-20,-6.67);
\draw (-19.33,-6.67)-- (-19.33,-6);
\draw (-19.33,-6.67)-- (-20,-6.67);
\draw (-20.67,-5.33)-- (-20.67,-6);
\draw (-20,-6)-- (-20,-4);
\draw (-20.67,-5.33)-- (-20,-5.33);
\draw (-19.33,-5.33)-- (-19.33,-6);
\draw (-19.33,-5.33)-- (-20,-5.33);
\draw (-18,-6)-- (-20,-6);
\draw (-24.67,-2.67)-- (-24.67,-2);
\draw (-24,-2)-- (-24,-4);
\draw (-24.67,-2.67)-- (-24,-2.67);
\draw (-23.33,-2.67)-- (-23.33,-2);
\draw (-23.33,-2.67)-- (-24,-2.67);
\draw (-14,-8)-- (-18,-4);
\draw (-22,-4)-- (-23.33,-2.67);
\draw (-22,-4)-- (-20.67,-5.33);
\draw (-18,-4)-- (-19.33,-5.33);
\draw (-8,-2)-- (-8,-20);
\draw (-14,-8)-- (-8,-8);
\draw (-14,-8)-- (-14,-2);
\draw (-8,-14)-- (-8,-20);
\draw (-10,-16)-- (-8,-16);
\draw (-8,-14)-- (-14,-14);
\draw (-10,-16)-- (-10,-14);
\draw (-8,-14)-- (-8,-8);
\draw (-10,-12)-- (-8,-12);
\draw (-10,-12)-- (-10,-14);
\draw (-6,-16)-- (-8,-16);
\draw (-6,-16)-- (-6,-14);
\draw (-8,-14)-- (-8,-8);
\draw (-6,-12)-- (-8,-12);
\draw (-6,-12)-- (-6,-14);
\draw (-8,-8)-- (-8,-20);
\draw (-12,-14)-- (-12,-16);
\draw (-12.67,-14.67)-- (-12,-14.67);
\draw (-12,-14)-- (-14,-14);
\draw (-12.67,-14.67)-- (-12.67,-14);
\draw (-12.67,-13.33)-- (-12,-13.33);
\draw (-12.67,-13.33)-- (-12.67,-14);
\draw (-11.33,-14.67)-- (-12,-14.67);
\draw (-12,-14)-- (-10,-14);
\draw (-11.33,-14.67)-- (-11.33,-14);
\draw (-11.33,-13.33)-- (-12,-13.33);
\draw (-11.33,-13.33)-- (-11.33,-14);
\draw (-12,-12)-- (-12,-14);
\draw (-8.67,-18.67)-- (-8,-18.67);
\draw (-8,-18)-- (-10,-18);
\draw (-8.67,-18.67)-- (-8.67,-18);
\draw (-8.67,-17.33)-- (-8,-17.33);
\draw (-8.67,-17.33)-- (-8.67,-18);
\draw (-14,-8)-- (-10,-12);
\draw (-10,-16)-- (-8.67,-17.33);
\draw (-10,-16)-- (-11.33,-14.67);
\draw (-10,-12)-- (-11.33,-13.33);
\end{tikzpicture} 
\caption{The Salvetti complex for the free group on two generators overlaid with the Davis-Moussong complex for a path of length $3$. The Davis-Moussong complex retracts onto the Salvetti complex by the collapse of hyperplanes.}\label{tree}
\end{figure}

\begin{proof} [Proof of Corollary \ref{AsepRAAG}]
$\Rightarrow$: If $H,K$ are components of $\mathcal{G}^c$, then $A(\mathcal{G}) = A(H^c) \times A(K^c)$ so by Lemma \ref{product} the trivial subgroup $\{e\}$ is not $\mathcal{A}$-separable in $A(\mathcal{G})$.

$\Leftarrow$:  Let $\mathcal{H}$ be as in  Lemma \ref{RAAGtoRACG}.

Suppose $U$ is a  proper component of $\mathcal{H}^c$. The vertices $(v,0)$ and $(v,1)$ are not connected by an edge in $\mathcal{H}$, so $U^0$ is of the form $V \times \{0,1\}$ for some $V \subsetneq \mathcal{G}^0$. But then looking at $V \times \{1\} \subset \mathcal{G} \times \{1\}$ gives that $V^0$ is a vertex set of a proper component of $\mathcal{G}^c$.

So $\mathcal{G}^c$ being connected implies that $\mathcal{H}^c$ is connected.

By Lemma \ref{qcinRACGtoqcinRAAG} $K$ is convex-cocompact in $C(\mathcal{H})$ and hence by \ref{main} it is $\mathcal{A}$-separable in $C(
\mathcal{H})$. By Lemma \ref{fi} $K$ is also $\mathcal{A}$-separable in $A(\mathcal{G})$.
\end{proof}

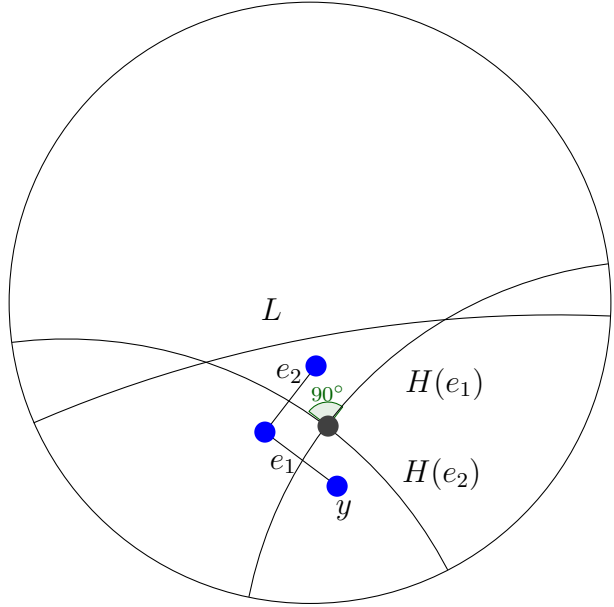
\begin{figure}
\centering
\definecolor{qqwuqq}{rgb}{0,0.39,0}
\definecolor{uququq}{rgb}{0.25,0.25,0.25}
\definecolor{qqqqff}{rgb}{0,0,1}
\begin{tikzpicture}[line cap=round,line join=round,>=triangle 45,x=1.0cm,y=1.0cm,scale=4]
\clip(-1.45,-1.32) rectangle (1.69,1.13);
\draw [shift={(0.06,-0.41)},color=qqwuqq,fill=qqwuqq,fill opacity=0.1] (0,0) -- (52.63:0.08) arc (52.63:142.63:0.08) -- cycle;
\draw(0,0) circle (1cm);
\draw [shift={(0.82,-4.43)}] plot[domain=1.53:1.978,variable=\t]({1*4.39*cos(\t r)+0*4.39*sin(\t r)},{0*4.39*cos(\t r)+1*4.39*sin(\t r)});
\draw [shift={(1.16,-1.25)}] plot[domain=1.694:2.944,variable=\t]({1*1.39*cos(\t r)+0*1.39*sin(\t r)},{0*1.39*cos(\t r)+1*1.39*sin(\t r)});
\draw [shift={(-0.81,-1.55)}] plot[domain=0.482:1.697,variable=\t]({1*1.43*cos(\t r)+0*1.43*sin(\t r)},{0*1.43*cos(\t r)+1*1.43*sin(\t r)});
\draw (0.09,-0.61)-- (-0.15,-0.43);
\draw (-0.15,-0.43)-- (0.02,-0.21);
\draw (0.05,-0.62) node[anchor=north west] {$y$};
\draw (-0.17,-0.47) node[anchor=north west] {$e_1$};
\draw (-0.15,-0.17) node[anchor=north west] {$e_2$};
\draw (0.28,-0.18) node[anchor=north west] {$H(e_1)$};
\draw (0.27,-0.48) node[anchor=north west] {$H(e_2)$};
\draw (-0.2, 0.05) node[anchor=north west] {$L$};
\begin{scriptsize}
\fill [color=qqqqff] (0.09,-0.61) circle (1pt);
\fill [color=qqqqff] (-0.15,-0.43) circle (1pt);
\fill [color=qqqqff] (0.02,-0.21) circle (1pt);
\fill [color=uququq] (0.06,-0.41) circle (1pt);
\draw[color=qqwuqq] (0.06,-0.3) node {$90\textrm{\degre}$};
\end{scriptsize}
\end{tikzpicture} 
\caption{Sketch of proof of Lemma \ref{fgtoqc}.}\label{correction}
\end{figure}

\begin{lemma} \cite[Correction to the proof of Theorem 3.1]{scott1985correction} \label{fgtoqc}
A closed, orientable, hyperbolic surface group $G$ is a finite index subgroup of $C(C_5)$, where $C_5$ is a cycle of length $5$. Moreover, for a suitable embedding $G \xhookrightarrow{} C(C_5)$, all finitely generated subgroups of $G$ are convex-cocompact in $C(C_5)$ with respect to the action on $DM(C_5)$.
\end{lemma}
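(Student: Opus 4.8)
The lemma has two independent assertions, and I would prove them in turn: first that every closed orientable hyperbolic surface group occurs as a finite-index subgroup of $W := C(C_5)$, and then that, for a suitable embedding, every finitely generated subgroup is cubically quasiconvex with respect to $DM(C_5)$. The first part is essentially classical, so I would dispatch it quickly. The plan is to identify $W$ with the group generated by the reflections in the five sides of a regular right-angled pentagon $P \subset \mathbb{H}^2$: consecutive sides meet at a right angle, so the corresponding reflections commute, whereas non-consecutive sides span disjoint (ultraparallel) geodesics and their reflections satisfy no relation. Matching sides to vertices of $C_5$ and right-angled adjacencies to edges identifies this reflection group with $W$, which therefore acts properly and cocompactly on $\mathbb{H}^2$ with $P$ as a strict fundamental domain. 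By Selberg's lemma $W$ has a torsion-free finite-index subgroup, which acts freely and cocompactly and is thus a closed hyperbolic surface group; an Euler-characteristic count, $\chi^{\mathrm{orb}}(\mathbb{H}^2/W) = -1/4$, shows that a standard orientation-preserving torsion-free subgroup of index $8$ is a genus-two surface group $\Sigma_2$. Every $\Sigma_g$ with $g \geq 2$ then arises as a connected degree-$(g-1)$ cover of $\Sigma_2$, hence as a finite-index subgroup of $W$, which settles the first assertion.

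For the quasiconvexity I would first record that $W$ is word-hyperbolic: $C_5$ has no induced $4$-cycle, so $W$ contains no $\mathbb{Z}^2$ and is quasi-isometric to $\mathbb{H}^2$. Consequently $DM(C_5)$, on which $W$ acts properly and cocompactly, is a hyperbolic $CAT(0)$ cube complex, and under the quasi-isometry $DM(C_5) \to \mathbb{H}^2$ its hyperplanes map, within bounded distance, to the geodesic lines carrying the sides of the pentagon tiling, the two half-spaces $H^-$, $H^+$ corresponding to the two sides of such a geodesic (this is the picture in Figure \ref{correction}). Now let $H \leq G$ be finitely generated. Since $G$ is a surface group, $H$ is a finitely generated Fuchsian group, hence geometrically finite, hence quasiconvex in $\mathbb{H}^2$; because $[W:G]<\infty$ and quasiconvexity is a commensurability invariant inside a hyperbolic group, the orbit $H v_0 \subset DM(C_5)$ is quasiconvex.

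It remains to produce a convex $H$-cocompact subcomplex. I would take $Y$ to be the combinatorial convex hull of $H v_0$, that is, the intersection of all half-spaces containing $H v_0$; by the cited results (\cite[Corollary 2.16, Proposition 2.17]{haglund2008finite}) $Y$ is convex, and it is $H$-invariant because $H$ permutes the defining family of half-spaces. The only substantial point — and the expected main obstacle — is cocompactness of the $H$-action on $Y$, equivalently that $Y$ lies in a bounded neighbourhood of $H v_0$. Here hyperbolicity is indispensable and the hyperplanes enter as in Figure \ref{correction}: a hyperplane bounds $Y$ precisely when it separates two orbit points, and a point of $Y$ lying far from $H v_0$ would be cut off by a wall disjoint from the quasiconvex orbit, the two hyperplanes $H(e_1)$, $H(e_2)$ dual to the outgoing edges $e_1$, $e_2$ at such a point meeting at a right angle and bounding a region $L$ that escapes every bounded neighbourhood of $H v_0$, contradicting quasiconvexity. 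Quantifying this bounds the Hausdorff distance between $Y$ and $H v_0$, whereupon $H$-cocompactness of $Y$ follows from $H$-cocompactness of a bounded neighbourhood of the orbit, completing the proof.
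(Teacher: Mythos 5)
Your first assertion is handled correctly and by the standard classical argument: identifying $C(C_5)$ with the reflection group in the sides of a right-angled hyperbolic pentagon, producing the index-$8$ genus-$2$ surface subgroup (area $4\pi$ versus $\pi/2$), and using the covers $\Sigma_g \to \Sigma_2$ to capture every genus. For the second assertion you take a genuinely different route from the one the paper follows (which is Scott's, via \cite{scott1978subgroups} and \cite{scott1985correction}): instead of passing to the cover $\Sigma_H$, extracting a compact incompressible subsurface $\Sigma'$ with geodesic boundary that carries $H$, and taking the hull of its lift $\widetilde{\Sigma'}$, you argue that the orbit $Hv_0$ is itself quasiconvex (convex cocompactness of finitely generated Fuchsian groups without parabolics) and take the combinatorial convex hull of the orbit. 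That strategy is legitimate in principle, and your $Y$ is indeed convex and $H$-invariant for the reasons you give.

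The gap is in the one step that carries all the content: cocompactness, i.e. that $Y$ stays in a bounded neighbourhood of $Hv_0$. As written your argument does not parse. A point of $Y$ is by definition \emph{not} separated from $Hv_0$ by any wall, so it cannot be ``cut off by a wall disjoint from the quasiconvex orbit''; and the object you call $L$ has no counterpart in the actual estimate. In Scott's argument $L$ is the nearest geodesic boundary component of $\widetilde{\Sigma'}$, and the point of Figure \ref{correction} is that the crossing point $y'$ of $H(e_1)$ and $H(e_2)$ sees $L$ at an angle at least $\pi/2$ and is therefore within a universal distance of $L$, hence of $\widetilde{\Sigma'}$; this uses essentially that the hull is taken around a convex region with geodesic boundary, and for a bare orbit there is no $L$ to subtend an angle. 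To close the gap you must either (a) invoke the general theorem that in a hyperbolic $CAT(0)$ cube complex the combinatorial convex hull of a quasiconvex subset is at finite Hausdorff distance from it (this is in \cite{haglund2008finite} and would complete your route), or (b) follow the paper and replace $Hv_0$ by $\widetilde{\Sigma'}$ before taking the hull, so that every hyperplane meeting the hull meets $\widetilde{\Sigma'}$ and the explicit hyperbolic-trigonometric bound applies. Without one of these, the key claim is asserted rather than proved.
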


\begin{remark}[Idea of proof]
Scott uses a different terminology, so it makes sense to summarise the proof. The natural generators of $C(C_5)$ act on the hyperbolic plane by reflections in  the sides of a right-angled pentagon. Translates of the pentagon give a tiling of the hyperbolic plane. Dual to this cell complex is a square complex $DM(C_5)$. Under this identification, the geodesic lines bounding the pentagons of the tiling become hyperplanes of $DM(C_5)$.
 
Suppose  $H$ is a finitely generated subgroup of the surface group $G = \pi_1 (\Sigma)$. Let $\Sigma_H$ be the covering space associated to $H$. By Lemma 1.5 in $\cite{scott1978subgroups}$,  there exists a closed, compact, incompressible subsurface $\Sigma' \subset \Sigma_H$ such that the induced map $\pi_1 \Sigma' \longrightarrow \pi_1\Sigma_H$ is surjective. Moreover, by \cite[Correction to the proof of Theorem 3.1]{scott1985correction} we can require $\Sigma'$ to have a geodesic boundary.

Let $\widetilde{\Sigma'}$ be the lift of  $\Sigma	'$ to $\mathbb{H}^2 = DM(C_5)$. Let $Y$ be the intersection of all half-spaces containing $\widetilde{\Sigma'}$. Suppose $y$ lies in $Y$, but not in $N_3(\widetilde{\Sigma'})$ and that $e_1, e_2$ are the first two edges of the combinatorial geodesic from  $y$ to $\widetilde{\Sigma'}$. Since $y \in Y$, both $H(e_1)$ and $H(e_2)$ intersect $\widetilde{\Sigma'}$. Consequently, $H(e_1)$ intersects $H(e_2)$ as $H(e_2)$ does not separate $H(e_1)$ from $\widetilde{\Sigma'}$. Call the intersection $y'$. The point $y$ is a centre of a pentagon and $y'$ is a vertex of the same pentagon, so the distance between them does not depend on $y$ (for example by specialness of $DM(\mathcal{G})$). See Figure \ref{correction}.

The closest boundary component $L$ of $\widetilde{\Sigma'}$ to $y$ is seen from $y'$ at more than the right angle (remember that the hyperplanes are geodesics). But such a point is within distance $\int_{t=0 }^{\pi/4} \frac{1}{\cos(t)}dt$ of $L$. To see this, take $L$ to be the vertical ray through $(0,0)$ in the upper half-plane model to the. Then the set of points with obtuse subtended angle is contained between rays $y = x$ and $y =-x$. Geodesic between these rays and $L$ is an arc of length $$\int_{t=0}^{\pi/4} \frac{r \sqrt{\cos^2(t) + \sin^2(t)}}{r \cos(t)} dt = \int_{t=0 }^{\pi/4} \frac{1}{\cos(t)}dt \,.$$

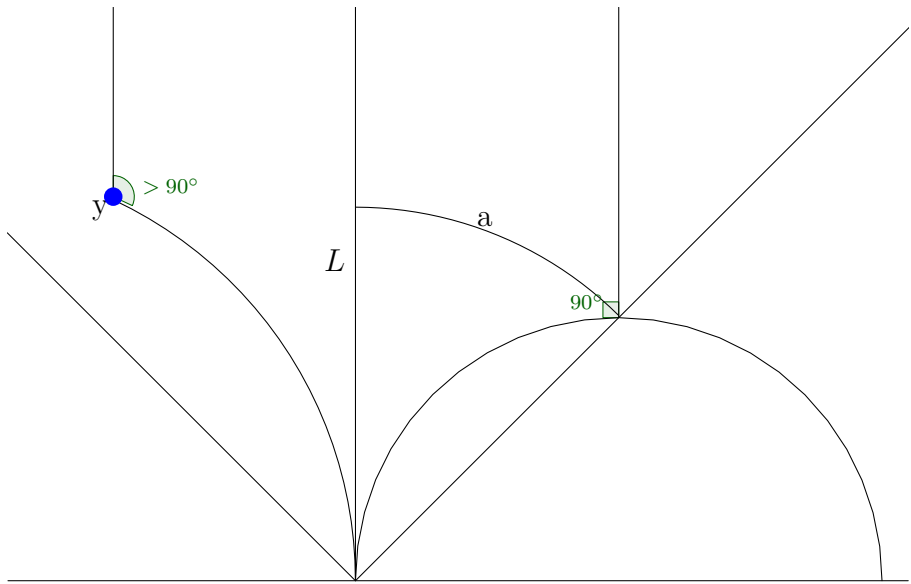
\begin{figure}
\centering
\definecolor{qqqqff}{rgb}{0,0,1}
\definecolor{qqwuqq}{rgb}{0,0.39,0}
\begin{tikzpicture}[line cap=round,line join=round,>=triangle 45,x=1.0cm,y=1.0cm,scale=7]
\clip(-0.66,-0.26) rectangle (1.05,1.09);
\draw[color=qqwuqq,fill=qqwuqq,fill opacity=0.1] (0.5,0.53) -- (0.47,0.53) -- (0.47,0.5) -- (0.5,0.5) -- cycle; 
\draw [shift={(-0.46,0.73)},color=qqwuqq,fill=qqwuqq,fill opacity=0.1] (0,0) -- (-25.22:0.04) arc (-25.22:90:0.04) -- cycle;
\draw [domain=-0.66:1.05] plot(\x,{(-0-0*\x)/1});
\draw (0,0) -- (0,1.09);
\draw [shift={(0.5,0)}] plot[domain=0:3.14,variable=\t]({1*0.5*cos(\t r)+0*0.5*sin(\t r)},{0*0.5*cos(\t r)+1*0.5*sin(\t r)});
\draw (0.5,0.5) -- (0.5,1.09);
\draw [domain=0.0:1.0541043647728972] plot(\x,{(-0--0.5*\x)/0.5});
\draw [domain=-0.6612882541543814:0.0] plot(\x,{(-0--0.5*\x)/-0.5});
\draw (-0.08,0.65) node[anchor=north west] {$L$};
\draw [shift={(0,0)}] plot[domain=0.79:1.57,variable=\t]({1*0.71*cos(\t r)+0*0.71*sin(\t r)},{0*0.71*cos(\t r)+1*0.71*sin(\t r)});
\draw [shift={(-0.8,0)}] plot[domain=0:1.13,variable=\t]({1*0.8*cos(\t r)+0*0.8*sin(\t r)},{0*0.8*cos(\t r)+1*0.8*sin(\t r)});
\draw (-0.46,0.73) -- (-0.46,1.09);
\draw (-0.54,0.76) node[anchor=north west] {$$ y' $$};
\draw (0.21,0.72) node[anchor=north west] {a};
\begin{scriptsize}
\draw[color=qqwuqq] (0.44,0.53) node {$90\textrm{\degre}$};
\fill [color=qqqqff] (-0.46,0.73) circle (0.5pt);
\draw[color=qqwuqq] (-0.35,0.75) node {$>90\textrm{\degre}$};
\end{scriptsize}
\end{tikzpicture}
\caption{If the angle subtended by $L$ from $y$ is obtuse, then $y$ is uniformly close to $L$.}
\end{figure}

Therefore $y'$ (and hence $y$) is at a uniformly bounded distance from $\widetilde{\Sigma'}$ and the action of $H$ on $Y$ is cocompact.
\end{remark}

\begin{corollary} \label{surfaces}
All finitely generated infinite index subgroups of closed, orientable, hyperbolic surface group $G$ are $\mathcal{A}$-separable in $G$.
\end{corollary}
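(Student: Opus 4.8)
The plan is to assemble three results already in hand: the embedding of $G$ into $C(C_5)$ from Lemma \ref{fgtoqc}, the Main Theorem \ref{main} applied to $C(C_5)$, and the index-descent principle of Lemma \ref{fi}. The overall shape is to first separate a given subgroup inside the ambient right-angled Coxeter group, where the Main Theorem does the heavy lifting, and then to push this separability down to the finite-index surface subgroup.

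First I would record that $C_5$ meets every hypothesis of the Main Theorem. It is a finite simplicial graph on $5 \ge 3$ vertices and is non-discrete since it has edges. The only point worth a remark is that the complement $C_5^c$ is connected: labelling the cyclic edges of $C_5$ as $\{i, i+1\}$, the complement has edges $\{i, i+2\}$, which again form a single $5$-cycle (the pentagram). Thus $C_5^c \cong C_5$ is connected, and the Main Theorem \ref{main} guarantees that every infinite-index quasiconvex subgroup of $C(C_5)$ is $\mathcal{A}$-separable.

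Next, let $K$ be a finitely generated, infinite-index subgroup of $G$. By Lemma \ref{fgtoqc} I may fix a finite-index embedding $G \hookrightarrow C(C_5)$ for which $K$ is quasiconvex in $C(C_5)$ with respect to the action on $DM(C_5)$. Because index is multiplicative, $[C(C_5) : K] = [C(C_5) : G]\,[G : K]$ is infinite, so $K$ is an infinite-index quasiconvex subgroup of $C(C_5)$; by the previous paragraph $K$ is therefore $\mathcal{A}$-separable in $C(C_5)$. Finally I would invoke Lemma \ref{fi} with ambient group $C(C_5)$, its finite-index subgroup $G$, and the infinite-index subgroup $K \le G$. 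This is precisely the configuration of that lemma, and it yields that $K$ is $\mathcal{A}$-separable in $G$, as required.

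I do not expect a serious obstacle in this argument: essentially all the difficulty has been front-loaded into Lemma \ref{fgtoqc} (the geometric input realising the surface group inside $C(C_5)$ with its finitely generated subgroups quasiconvex) and into the Main Theorem. The only care needed here is bookkeeping — verifying that $C_5^c$ is connected, that infinite index in $G$ forces infinite index in $C(C_5)$, and that the three groups line up with the hypotheses of Lemma \ref{fi} (ambient $C(C_5)$, finite-index $G$, infinite-index $K$). One subtlety worth flagging is that the infinite-index hypothesis is genuinely used: Lemma \ref{fi} can fail when $K$ has finite index in $G$, so the restriction to infinite-index subgroups in the statement is not merely cosmetic.
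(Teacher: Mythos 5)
Your proposal is correct and follows exactly the route of the paper's own proof: quasiconvexity in $C(C_5)$ via Lemma \ref{fgtoqc}, $\mathcal{A}$-separability in $C(C_5)$ via the Main Theorem, and descent to $G$ via Lemma \ref{fi}. The extra bookkeeping you supply (connectedness of $C_5^c$ as the pentagram, and infinite index in $G$ forcing infinite index in $C(C_5)$) is left implicit in the paper but is accurately verified.
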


\begin{proof}
By  Lemma \ref{fgtoqc}, finitely generated subgroups of $G$ are convex-cocompact in $C(C_5)$. By the Main Theorem \ref{main} they are $\mathcal{A}$-separable in $C(C_5)$. By Lemma \ref{fi}, they are $\mathcal{A}$-separable in $G$.
\end{proof}

%\begin{proof}
%In \cite{scott1978subgroups} Scott proves that orientable, hyperbolic surface groups are finite index subgroups of the right-angled Coxeter group associated to the graph $C_5$. This group acts on the hyperbolic plane, where the genarators act as reflections in the sides of a right-angled pentagon. This action preserves a tilling by right-angled pentagons and the surface subgroup acts freely. The square complex dual to this tilling is the Davis-Moussong complex $DM(C_5)$.
%
%Let $G$ be a finitely generated infinite index subgroup of a closed, orientable, hyperbolic surface group $\Sigma$. Consider the covering $F \longrightarrow \Sigma$ associated to $G$. By Lemma 1.5 from \cite{scott1978subgroups}, there is a compact, connected, incompressible subsurface $Y$ such that $\pi_1 (Y) \longrightarrow \pi_1 (F)$ is an isomorphism.
%We can push the square complex structure from $DM(C_5)$ to $F$. Since $Y$ is compact, only finitely many hyperplanes intersect it. Take  the convex envelope of $Y$, that is the intersection of all half-spaces of $F$, which contain  $Y$.
%
%
% By the main theorem the convex-cocompact subgroups of $C(\mathcal{G})$ are $\mathcal{A}$-separable in $C(\mathcal{G})$ and by Lemma \ref{fi}  the same holds for finite-index subgroups of $C(\mathcal{G})$.
%\end{proof}

\section{Proof of the Main Theorem}

\begin{definition} [Disjoint hyperplanes, bounding hyperplanes, positive half-space]
Let $X$ be a cube complex, $Y$ a convex subcomplex. Let $\mathcal{D}(Y)$ be the set of hyperplanes disjoint from $Y$. Let $\mathcal{B}(Y)$ be the set of hyperplanes bounding $Y$ (a hyperplane bounds $Y$ if it is dual to some $e$ with one endpoint in $Y$ and one not in $Y$).

If $H \in \mathcal{D}(Y)$, denote by \emph{$H^+$} the half-space of $X\backslash \backslash H$ containing $Y$.
\end{definition}

\begin{lemma} \label{bounding}
Any hyperplane $H$ bounding a convex subcomplex $Y$ in a $CAT(0)$ complex is disjoint from $Y$.
\end{lemma}
\begin{proof}
Let $e$ be an edge dual to $H$ such that $i(e) \in Y$ and $t(e) \notin Y$. Suppose $H$ intersects $Y$. Then there is an edge $e' \in Y$ dual to $H$. Without loss of generality $i(e')$ belongs to the same half-space of $H$ as $i(e)$. By convexity of $Y$ any (combinatorial) geodesic between $i(e)$ and $t(e')$ lies entirely in $Y$. A geodesic between two vertices in a $CAT(0)$ complex is precisely a path, which crosses each hyperplane separating the two vertices once \cite[Page 613]{sageev1995ends}. Therefore a concatenation of $e$ and a  geodesic from $t(e)$ to $t(e')$ is a geodesic from $i(e)$ to $t(e')$. The vertex $t(e)$ lies on this geodesic and hence belongs to $Y$ giving a contradiction.
\end{proof}
Recall that any intersection of half-spaces is convex and conversely any convex subcomplex is an intersection of the half-spaces containing it. Hence it is equivalent to specify a convex subcomplex or the half-spaces in which it is contained (or the set of disjoint hyperplanes if there can be no confusion about the choice of half-spaces, e.g. if only one choice gives a non-empty intersection).

\begin{definition}[Deletion, vertebra]
Suppose $G$ acts on a cube complex $X$ with core $Y$. Define \emph{deletion} as removing a bounding hyperplane $H_0$ and all its $G$-translates from $\mathcal{D}(Y)$.
The result of deletion of $H_0$ is $Y' = \cap_{H \in \mathcal{D}(Y) \setminus G. \{H_0 \} } H^+$.

The cube complex $V = H_0^- \cap Y'$ is called a \emph{vertebra}. See Figures \ref{fig:pentagons} and \ref{vertebra}.
\end{definition}

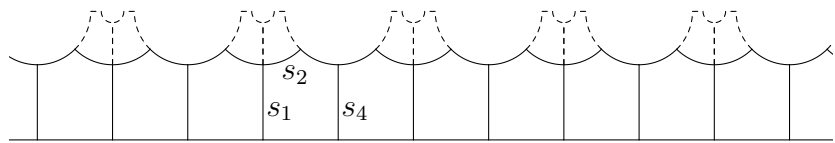
\begin{figure}
\centering
\begin{tikzpicture}[line cap=round,line join=round,>=triangle 45,x=1.0cm,y=1.0cm]
\clip(-4.37,-3.17) rectangle (6.57,5.42);
\draw (0,0)-- (0,1);
\draw (0,0)-- (1,0);
\draw (1,1)-- (1,0);
\draw [dash pattern=on 2pt off 2pt] (1,1)-- (1,1.56);
\draw [dash pattern=on 2pt off 2pt] (0.71,1.71)-- (0.85,1.71);
\draw [shift={(1,1.71)}] plot[domain=3.93:4.71,variable=\t]({1*0.71*cos(\t r)+0*0.71*sin(\t r)},{0*0.71*cos(\t r)+1*0.71*sin(\t r)});
\draw [shift={(0,1.71)}] plot[domain=4.71:5.5,variable=\t]({1*0.71*cos(\t r)+0*0.71*sin(\t r)},{0*0.71*cos(\t r)+1*0.71*sin(\t r)});
\draw [shift={(0,1.71)},dash pattern=on 2pt off 2pt]  plot[domain=-0.79:0,variable=\t]({1*0.71*cos(\t r)+0*0.71*sin(\t r)},{0*0.71*cos(\t r)+1*0.71*sin(\t r)});
\draw [shift={(1,1.71)},dash pattern=on 2pt off 2pt]  plot[domain=3.142:4.71,variable=\t]({1*0.15*cos(\t r)+0*0.15*sin(\t r)},{0*0.15*cos(\t r)+1*0.15*sin(\t r)});
\draw [shift={(1,1.71)}] plot[domain=3.93:4.71,variable=\t]({-1*0.71*cos(\t r)+0*0.71*sin(\t r)},{0*0.71*cos(\t r)+1*0.71*sin(\t r)});
\draw [shift={(2,1.71)}] plot[domain=4.71:5.5,variable=\t]({-1*0.71*cos(\t r)+0*0.71*sin(\t r)},{0*0.71*cos(\t r)+1*0.71*sin(\t r)});
\draw [shift={(2,1.71)},dash pattern=on 2pt off 2pt]  plot[domain=-0.79:0,variable=\t]({-1*0.71*cos(\t r)+0*0.71*sin(\t r)},{0*0.71*cos(\t r)+1*0.71*sin(\t r)});
\draw [shift={(1,1.71)},dash pattern=on 2pt off 2pt]  plot[domain=3.142:4.71,variable=\t]({-1*0.15*cos(\t r)+0*0.15*sin(\t r)},{0*0.15*cos(\t r)+1*0.15*sin(\t r)});
\draw (2,0)-- (2,1);
\draw (2,0)-- (1,0);
\draw [dash pattern=on 2pt off 2pt] (1,1)-- (1,1.56);
\draw [dash pattern=on 2pt off 2pt] (1.29,1.71)-- (1.15,1.71);
\draw [shift={(3,1.71)}] plot[domain=3.93:4.71,variable=\t]({-1*0.71*cos(\t r)+0*0.71*sin(\t r)},{0*0.71*cos(\t r)+1*0.71*sin(\t r)});
\draw [shift={(4,1.71)}] plot[domain=4.71:5.5,variable=\t]({-1*0.71*cos(\t r)+0*0.71*sin(\t r)},{0*0.71*cos(\t r)+1*0.71*sin(\t r)});
\draw [shift={(4,1.71)},dash pattern=on 2pt off 2pt]  plot[domain=-0.79:0,variable=\t]({-1*0.71*cos(\t r)+0*0.71*sin(\t r)},{0*0.71*cos(\t r)+1*0.71*sin(\t r)});
\draw [shift={(3,1.71)},dash pattern=on 2pt off 2pt]  plot[domain=3.142:4.71,variable=\t]({-1*0.15*cos(\t r)+0*0.15*sin(\t r)},{0*0.15*cos(\t r)+1*0.15*sin(\t r)});
\draw [shift={(3,1.71)}] plot[domain=3.93:4.71,variable=\t]({1*0.71*cos(\t r)+0*0.71*sin(\t r)},{0*0.71*cos(\t r)+1*0.71*sin(\t r)});
\draw [shift={(2,1.71)}] plot[domain=4.71:5.5,variable=\t]({1*0.71*cos(\t r)+0*0.71*sin(\t r)},{0*0.71*cos(\t r)+1*0.71*sin(\t r)});
\draw [shift={(2,1.71)},dash pattern=on 2pt off 2pt]  plot[domain=-0.79:0,variable=\t]({1*0.71*cos(\t r)+0*0.71*sin(\t r)},{0*0.71*cos(\t r)+1*0.71*sin(\t r)});
\draw [shift={(3,1.71)},dash pattern=on 2pt off 2pt]  plot[domain=3.142:4.71,variable=\t]({1*0.15*cos(\t r)+0*0.15*sin(\t r)},{0*0.15*cos(\t r)+1*0.15*sin(\t r)});
\draw (4,0)-- (4,1);
\draw (4,0)-- (3,0);
\draw (3,1)-- (3,0);
\draw [dash pattern=on 2pt off 2pt] (3,1)-- (3,1.56);
\draw [dash pattern=on 2pt off 2pt] (3.29,1.71)-- (3.15,1.71);
\draw (2,0)-- (3,0);
\draw [dash pattern=on 2pt off 2pt] (3,1)-- (3,1.56);
\draw [dash pattern=on 2pt off 2pt] (2.71,1.71)-- (2.85,1.71);
\draw [shift={(7,1.71)}] plot[domain=3.93:4.71,variable=\t]({-1*0.71*cos(\t r)+0*0.71*sin(\t r)},{0*0.71*cos(\t r)+1*0.71*sin(\t r)});
\draw [shift={(8,1.71)}] plot[domain=4.71:5.5,variable=\t]({-1*0.71*cos(\t r)+0*0.71*sin(\t r)},{0*0.71*cos(\t r)+1*0.71*sin(\t r)});
\draw [shift={(8,1.71)},dash pattern=on 2pt off 2pt]  plot[domain=-0.79:0,variable=\t]({-1*0.71*cos(\t r)+0*0.71*sin(\t r)},{0*0.71*cos(\t r)+1*0.71*sin(\t r)});
\draw [shift={(7,1.71)},dash pattern=on 2pt off 2pt]  plot[domain=3.142:4.71,variable=\t]({-1*0.15*cos(\t r)+0*0.15*sin(\t r)},{0*0.15*cos(\t r)+1*0.15*sin(\t r)});
\draw [shift={(7,1.71)}] plot[domain=3.93:4.71,variable=\t]({1*0.71*cos(\t r)+0*0.71*sin(\t r)},{0*0.71*cos(\t r)+1*0.71*sin(\t r)});
\draw [shift={(6,1.71)}] plot[domain=4.71:5.5,variable=\t]({1*0.71*cos(\t r)+0*0.71*sin(\t r)},{0*0.71*cos(\t r)+1*0.71*sin(\t r)});
\draw [shift={(6,1.71)},dash pattern=on 2pt off 2pt]  plot[domain=-0.79:0,variable=\t]({1*0.71*cos(\t r)+0*0.71*sin(\t r)},{0*0.71*cos(\t r)+1*0.71*sin(\t r)});
\draw [shift={(7,1.71)},dash pattern=on 2pt off 2pt]  plot[domain=3.142:4.71,variable=\t]({1*0.15*cos(\t r)+0*0.15*sin(\t r)},{0*0.15*cos(\t r)+1*0.15*sin(\t r)});
\draw [shift={(5,1.71)}] plot[domain=3.93:4.71,variable=\t]({1*0.71*cos(\t r)+0*0.71*sin(\t r)},{0*0.71*cos(\t r)+1*0.71*sin(\t r)});
\draw [shift={(4,1.71)}] plot[domain=4.71:5.5,variable=\t]({1*0.71*cos(\t r)+0*0.71*sin(\t r)},{0*0.71*cos(\t r)+1*0.71*sin(\t r)});
\draw [shift={(4,1.71)},dash pattern=on 2pt off 2pt]  plot[domain=-0.79:0,variable=\t]({1*0.71*cos(\t r)+0*0.71*sin(\t r)},{0*0.71*cos(\t r)+1*0.71*sin(\t r)});
\draw [shift={(5,1.71)},dash pattern=on 2pt off 2pt]  plot[domain=3.142:4.71,variable=\t]({1*0.15*cos(\t r)+0*0.15*sin(\t r)},{0*0.15*cos(\t r)+1*0.15*sin(\t r)});
\draw [shift={(5,1.71)}] plot[domain=3.93:4.71,variable=\t]({-1*0.71*cos(\t r)+0*0.71*sin(\t r)},{0*0.71*cos(\t r)+1*0.71*sin(\t r)});
\draw [shift={(6,1.71)}] plot[domain=4.71:5.5,variable=\t]({-1*0.71*cos(\t r)+0*0.71*sin(\t r)},{0*0.71*cos(\t r)+1*0.71*sin(\t r)});
\draw [shift={(6,1.71)},dash pattern=on 2pt off 2pt]  plot[domain=-0.79:0,variable=\t]({-1*0.71*cos(\t r)+0*0.71*sin(\t r)},{0*0.71*cos(\t r)+1*0.71*sin(\t r)});
\draw [shift={(5,1.71)},dash pattern=on 2pt off 2pt]  plot[domain=3.142:4.71,variable=\t]({-1*0.15*cos(\t r)+0*0.15*sin(\t r)},{0*0.15*cos(\t r)+1*0.15*sin(\t r)});
\draw (8,0)-- (8,1);
\draw (8,0)-- (7,0);
\draw (7,1)-- (7,0);
\draw [dash pattern=on 2pt off 2pt] (7,1)-- (7,1.56);
\draw [dash pattern=on 2pt off 2pt] (7.29,1.71)-- (7.15,1.71);
\draw (6,0)-- (6,1);
\draw (6,0)-- (7,0);
\draw [dash pattern=on 2pt off 2pt] (7,1)-- (7,1.56);
\draw [dash pattern=on 2pt off 2pt] (6.71,1.71)-- (6.85,1.71);
\draw (4,0)-- (5,0);
\draw (5,1)-- (5,0);
\draw [dash pattern=on 2pt off 2pt] (5,1)-- (5,1.56);
\draw [dash pattern=on 2pt off 2pt] (4.71,1.71)-- (4.85,1.71);
\draw (6,0)-- (5,0);
\draw [dash pattern=on 2pt off 2pt] (5,1)-- (5,1.56);
\draw [dash pattern=on 2pt off 2pt] (5.29,1.71)-- (5.15,1.71);
\draw [shift={(-1,1.71)}] plot[domain=3.93:4.71,variable=\t]({-1*0.71*cos(\t r)+0*0.71*sin(\t r)},{0*0.71*cos(\t r)+1*0.71*sin(\t r)});
\draw [shift={(0,1.71)}] plot[domain=4.71:5.5,variable=\t]({-1*0.71*cos(\t r)+0*0.71*sin(\t r)},{0*0.71*cos(\t r)+1*0.71*sin(\t r)});
\draw [shift={(0,1.71)},dash pattern=on 2pt off 2pt]  plot[domain=-0.79:0,variable=\t]({-1*0.71*cos(\t r)+0*0.71*sin(\t r)},{0*0.71*cos(\t r)+1*0.71*sin(\t r)});
\draw [shift={(-1,1.71)},dash pattern=on 2pt off 2pt]  plot[domain=3.142:4.71,variable=\t]({-1*0.15*cos(\t r)+0*0.15*sin(\t r)},{0*0.15*cos(\t r)+1*0.15*sin(\t r)});
\draw [shift={(-1,1.71)}] plot[domain=3.93:4.71,variable=\t]({1*0.71*cos(\t r)+0*0.71*sin(\t r)},{0*0.71*cos(\t r)+1*0.71*sin(\t r)});
\draw [shift={(-2,1.71)}] plot[domain=4.71:5.5,variable=\t]({1*0.71*cos(\t r)+0*0.71*sin(\t r)},{0*0.71*cos(\t r)+1*0.71*sin(\t r)});
\draw [shift={(-2,1.71)},dash pattern=on 2pt off 2pt]  plot[domain=-0.79:0,variable=\t]({1*0.71*cos(\t r)+0*0.71*sin(\t r)},{0*0.71*cos(\t r)+1*0.71*sin(\t r)});
\draw [shift={(-1,1.71)},dash pattern=on 2pt off 2pt]  plot[domain=3.142:4.71,variable=\t]({1*0.15*cos(\t r)+0*0.15*sin(\t r)},{0*0.15*cos(\t r)+1*0.15*sin(\t r)});
\draw [shift={(-3,1.71)}] plot[domain=3.93:4.71,variable=\t]({1*0.71*cos(\t r)+0*0.71*sin(\t r)},{0*0.71*cos(\t r)+1*0.71*sin(\t r)});
\draw [shift={(-4,1.71)}] plot[domain=4.71:5.5,variable=\t]({1*0.71*cos(\t r)+0*0.71*sin(\t r)},{0*0.71*cos(\t r)+1*0.71*sin(\t r)});
\draw [shift={(-4,1.71)},dash pattern=on 2pt off 2pt]  plot[domain=-0.79:0,variable=\t]({1*0.71*cos(\t r)+0*0.71*sin(\t r)},{0*0.71*cos(\t r)+1*0.71*sin(\t r)});
\draw [shift={(-3,1.71)},dash pattern=on 2pt off 2pt]  plot[domain=3.142:4.71,variable=\t]({1*0.15*cos(\t r)+0*0.15*sin(\t r)},{0*0.15*cos(\t r)+1*0.15*sin(\t r)});
\draw [shift={(-3,1.71)}] plot[domain=3.93:4.71,variable=\t]({-1*0.71*cos(\t r)+0*0.71*sin(\t r)},{0*0.71*cos(\t r)+1*0.71*sin(\t r)});
\draw [shift={(-2,1.71)}] plot[domain=4.71:5.5,variable=\t]({-1*0.71*cos(\t r)+0*0.71*sin(\t r)},{0*0.71*cos(\t r)+1*0.71*sin(\t r)});
\draw [shift={(-2,1.71)},dash pattern=on 2pt off 2pt]  plot[domain=-0.79:0,variable=\t]({-1*0.71*cos(\t r)+0*0.71*sin(\t r)},{0*0.71*cos(\t r)+1*0.71*sin(\t r)});
\draw [shift={(-3,1.71)},dash pattern=on 2pt off 2pt]  plot[domain=3.142:4.71,variable=\t]({-1*0.15*cos(\t r)+0*0.15*sin(\t r)},{0*0.15*cos(\t r)+1*0.15*sin(\t r)});
\draw [shift={(-7,1.71)}] plot[domain=3.93:4.71,variable=\t]({1*0.71*cos(\t r)+0*0.71*sin(\t r)},{0*0.71*cos(\t r)+1*0.71*sin(\t r)});
\draw [shift={(-8,1.71)}] plot[domain=4.71:5.5,variable=\t]({1*0.71*cos(\t r)+0*0.71*sin(\t r)},{0*0.71*cos(\t r)+1*0.71*sin(\t r)});
\draw [shift={(-8,1.71)},dash pattern=on 2pt off 2pt]  plot[domain=-0.79:0,variable=\t]({1*0.71*cos(\t r)+0*0.71*sin(\t r)},{0*0.71*cos(\t r)+1*0.71*sin(\t r)});
\draw [shift={(-7,1.71)},dash pattern=on 2pt off 2pt]  plot[domain=3.142:4.71,variable=\t]({1*0.15*cos(\t r)+0*0.15*sin(\t r)},{0*0.15*cos(\t r)+1*0.15*sin(\t r)});
\draw [shift={(-7,1.71)}] plot[domain=3.93:4.71,variable=\t]({-1*0.71*cos(\t r)+0*0.71*sin(\t r)},{0*0.71*cos(\t r)+1*0.71*sin(\t r)});
\draw [shift={(-6,1.71)}] plot[domain=4.71:5.5,variable=\t]({-1*0.71*cos(\t r)+0*0.71*sin(\t r)},{0*0.71*cos(\t r)+1*0.71*sin(\t r)});
\draw [shift={(-6,1.71)},dash pattern=on 2pt off 2pt]  plot[domain=-0.79:0,variable=\t]({-1*0.71*cos(\t r)+0*0.71*sin(\t r)},{0*0.71*cos(\t r)+1*0.71*sin(\t r)});
\draw [shift={(-7,1.71)},dash pattern=on 2pt off 2pt]  plot[domain=3.142:4.71,variable=\t]({-1*0.15*cos(\t r)+0*0.15*sin(\t r)},{0*0.15*cos(\t r)+1*0.15*sin(\t r)});
\draw [shift={(-5,1.71)}] plot[domain=3.93:4.71,variable=\t]({-1*0.71*cos(\t r)+0*0.71*sin(\t r)},{0*0.71*cos(\t r)+1*0.71*sin(\t r)});
\draw [shift={(-4,1.71)}] plot[domain=4.71:5.5,variable=\t]({-1*0.71*cos(\t r)+0*0.71*sin(\t r)},{0*0.71*cos(\t r)+1*0.71*sin(\t r)});
\draw [shift={(-4,1.71)},dash pattern=on 2pt off 2pt]  plot[domain=-0.79:0,variable=\t]({-1*0.71*cos(\t r)+0*0.71*sin(\t r)},{0*0.71*cos(\t r)+1*0.71*sin(\t r)});
\draw [shift={(-5,1.71)},dash pattern=on 2pt off 2pt]  plot[domain=3.142:4.71,variable=\t]({-1*0.15*cos(\t r)+0*0.15*sin(\t r)},{0*0.15*cos(\t r)+1*0.15*sin(\t r)});
\draw [shift={(-5,1.71)}] plot[domain=3.93:4.71,variable=\t]({1*0.71*cos(\t r)+0*0.71*sin(\t r)},{0*0.71*cos(\t r)+1*0.71*sin(\t r)});
\draw [shift={(-6,1.71)}] plot[domain=4.71:5.5,variable=\t]({1*0.71*cos(\t r)+0*0.71*sin(\t r)},{0*0.71*cos(\t r)+1*0.71*sin(\t r)});
\draw [shift={(-6,1.71)},dash pattern=on 2pt off 2pt]  plot[domain=-0.79:0,variable=\t]({1*0.71*cos(\t r)+0*0.71*sin(\t r)},{0*0.71*cos(\t r)+1*0.71*sin(\t r)});
\draw [shift={(-5,1.71)},dash pattern=on 2pt off 2pt]  plot[domain=3.142:4.71,variable=\t]({1*0.15*cos(\t r)+0*0.15*sin(\t r)},{0*0.15*cos(\t r)+1*0.15*sin(\t r)});
\draw (0,0)-- (-1,0);
\draw (-1,1)-- (-1,0);
\draw [dash pattern=on 2pt off 2pt] (-1,1)-- (-1,1.56);
\draw [dash pattern=on 2pt off 2pt] (-0.71,1.71)-- (-0.85,1.71);
\draw (-2,0)-- (-2,1);
\draw (-2,0)-- (-1,0);
\draw [dash pattern=on 2pt off 2pt] (-1,1)-- (-1,1.56);
\draw [dash pattern=on 2pt off 2pt] (-1.29,1.71)-- (-1.15,1.71);
\draw (-4,0)-- (-4,1);
\draw (-4,0)-- (-3,0);
\draw (-3,1)-- (-3,0);
\draw [dash pattern=on 2pt off 2pt] (-3,1)-- (-3,1.56);
\draw [dash pattern=on 2pt off 2pt] (-3.29,1.71)-- (-3.15,1.71);
\draw (-2,0)-- (-3,0);
\draw [dash pattern=on 2pt off 2pt] (-3,1)-- (-3,1.56);
\draw [dash pattern=on 2pt off 2pt] (-2.71,1.71)-- (-2.85,1.71);
\draw (-8,0)-- (-8,1);
\draw (-8,0)-- (-7,0);
\draw (-7,1)-- (-7,0);
\draw [dash pattern=on 2pt off 2pt] (-7,1)-- (-7,1.56);
\draw [dash pattern=on 2pt off 2pt] (-7.29,1.71)-- (-7.15,1.71);
\draw (-6,0)-- (-6,1);
\draw (-6,0)-- (-7,0);
\draw [dash pattern=on 2pt off 2pt] (-7,1)-- (-7,1.56);
\draw [dash pattern=on 2pt off 2pt] (-6.71,1.71)-- (-6.85,1.71);
\draw (-4,0)-- (-5,0);
\draw (-5,1)-- (-5,0);
\draw [dash pattern=on 2pt off 2pt] (-5,1)-- (-5,1.56);
\draw [dash pattern=on 2pt off 2pt] (-4.71,1.71)-- (-4.85,1.71);
\draw (-6,0)-- (-5,0);
\draw [dash pattern=on 2pt off 2pt] (-5,1)-- (-5,1.56);
\draw [dash pattern=on 2pt off 2pt] (-5.29,1.71)-- (-5.15,1.71);
\draw (-1.1,0.65) node[anchor=north west] {$ s_1 $};
\draw (-0.10,0.65) node[anchor=north west] {$ s_4 $};
\draw (-0.90,1.13) node[anchor=north west] {$ s_2 $};
\end{tikzpicture}

\caption{Cocompact subgroup $\langle s_1 s_4 \rangle < C(C_5)$ has a core dual to a row of pentagons. By deletion of the hyperplane labelled $s_2$, we get a larger core for the same group. }
\label{fig:pentagons}
\end{figure}

A vertebra is an intersection of two combinatorially geodesically convex sets, so it also is combinatorially geodesically convex. In particular, it is connected.

\begin{definition}[Acting without self-intersections]
We say $G$ acts \emph{without self-intersections} on a cube complex $X$, if $N(gH) \cap N(H) \neq \emptyset$ implies $gH = H$ for all hyperplanes $H$ of $X$ and $g \in G$.
\end{definition}

\begin{figure} 
\centering
\begin{tikzpicture} [scale=2] 
\draw [dash pattern=on 2pt off 2pt] (1,0) -- (0,0);
\draw (1,0) node[anchor=north] {$V$};
\draw (0,0) -- ($ (-1/2,{sqrt(3)/2}) $);
\draw (0,0) -- ($ (-1/2,{-sqrt(3)/2}) $);
\draw ($ (-1/2,{sqrt(3)/2}) $) -- ($ (-1/2,{sqrt(3)/2}) +(1/2,{sqrt(3)/2})$);
\draw ($ (-1/2,-{sqrt(3)/2}) $) -- ($ (-1/2,-{sqrt(3)/2}) +(1/2,{-sqrt(3)/2})$);
\draw ($ (-1/2,{sqrt(3)/2}) $) -- ($ (-1/2,{sqrt(3)/2}) -(1,0)$);
\draw ($ (-1/2,{-sqrt(3)/2}) $) -- ($ (-1/2,{-sqrt(3)/2}) -(1,0)$);
\draw (1/2,0.05) -- (1/2,-0.05) node[midway,anchor=north] {$H_0$};
\draw [thick,decoration={brace},decorate] (0,-2) -- (-3/2,-2) node[midway,anchor=north] {$Y'$};
\draw [thick,decoration={brace},decorate] (1,-2.3) -- (-3/2,-2.3) node[midway,anchor=north] {$Y$};
\end{tikzpicture}
\caption{Here $X$ is a $3$-regular tree and $G$ is trivial.}
\label{vertebra}
\end{figure}
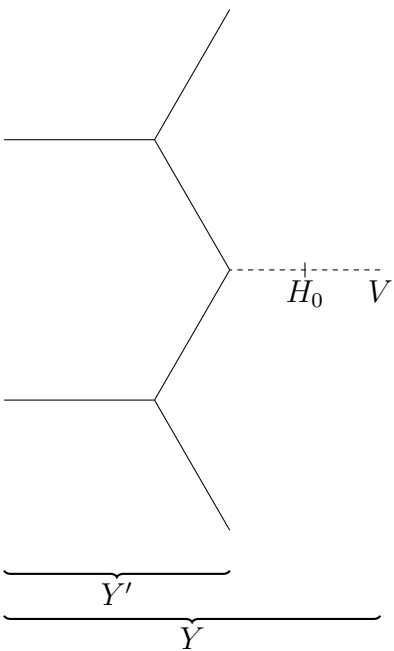

\begin{definition}[Special action]
An action of $G$ on a cube complex $X$ is \emph{special} if it is without self-intersections and moreover if $N(H) \cap N(K) \neq \emptyset$ and $H \cap gK \neq \emptyset$, then $H \cap K \neq \emptyset$.
\end{definition}

\begin{lemma} \label{lemmaA}
Suppose that $G$ acts without self-intersections on a locally compact $CAT(0)$ cube complex $X$ with core $Y$ and $H_0 \in \mathcal{B}(Y)$. Then the result $Y'$ of deletion of $H_0$ is also a core for $G$.
Let $G_{H_0} :=  \{g \in G | g.H_0 = H_0 \}$ be the stabiliser of $H_0$ in $G$.
If $C$ is a set of orbit representatives for the action of $G$ on the vertices of $Y$ and $D$ is a set of orbit representatives for the action of $G_{H_0}$ on the vertices of the vertebra $V = H_0^- \cap Y'$, then $C' = C \sqcup D$ is a set of orbit representatives for the action of $G$ on the vertices of $Y'$. Moreover, $Y' \subset N(Y)$.
\end{lemma}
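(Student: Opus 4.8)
The plan is to establish that $Y'$ is convex, $G$-invariant and cocompact (so that it is again a core), and then to show that the $G$-translates of the vertebra $V$ partition $Y'\setminus Y$; once this partition is in place the bookkeeping of orbit representatives becomes transparent.

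First I would record the easy structural facts. Since $Y' = \bigcap_{H \in \mathcal{D}(Y)\setminus G.\{H_0\}} H^+$ is an intersection of half-spaces it is convex, and because $Y$ is $G$-invariant the index set $\mathcal{D}(Y)\setminus G.\{H_0\}$ is $G$-invariant, so $Y'$ is $G$-invariant. As $Y'$ is cut out by fewer constraints than $Y$ we have $Y \subseteq Y'$, and since $H_0$ bounds $Y$ every vertex of $Y$ lies in $H_0^+$; hence $V = H_0^-\cap Y'$ contains no vertex of $Y$, and more generally $G.V \subseteq Y'\setminus Y$. The first substantive point is the converse inclusion $Y'\setminus Y = G.V$: if $w$ is a vertex of $Y'$ not in $Y$, then some $H \in \mathcal{D}(Y)$ satisfies $w \in H^-$, and since $w$ obeys every surviving constraint this $H$ must be a translate $g.H_0$, whence $w \in (g.H_0)^-\cap Y' = g.V$.

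The heart of the argument, and the step I expect to be the main obstacle, is \emph{uniqueness}: each vertex of $Y'\setminus Y$ lies in exactly one translate $g.V$, equivalently $A^-\cap B^- = \emptyset$ for distinct translates $A = g_1 H_0$ and $B = g_2 H_0$. I would argue this by the usual trichotomy for a pair of hyperplanes. They cannot cross, for a crossing gives $N(A)\cap N(B)\neq\emptyset$ and the action is without self-intersections, forcing $A = B$. They cannot be nested with, say, $B \subseteq A^-$: since $B$ bounds $Y$ it is dual to an edge one endpoint of which lies in $Y \subseteq A^+$, so if the midpoint of that edge (which lies on $B \subseteq A^-$) lay in the open half-space $A^-$, then $A$ would cross that edge and hence coincide with $B$, a contradiction. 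This leaves $A \subseteq B^+$ and $B \subseteq A^+$, and for such a disjoint ``parallel'' pair the region $A^-\cap B^-$ is empty: reading off the signs of the two hyperplanes, crossing $A$ keeps one in $B^+$ and crossing $B$ keeps one in $A^+$, so the far side of both is adjacent to neither hyperplane and is unreachable from $Y$. This yields the disjointness, so the translates $\{g.V\}$ partition $Y'\setminus Y$.

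With uniqueness the remaining steps are routine. For cocompactness I would note that a vertex $w \in V$ is separated from $Y$ by exactly one hyperplane, namely $H_0$, hence sits at combinatorial distance $1$ from $Y$ across the edge dual to $H_0$; the gate map then identifies $V$ $G_{H_0}$-equivariantly with the set of edges dual to $H_0$ that bound $Y$, i.e. with a subcomplex of $\partial N(H_0)^+\cap Y$. Because $G$ acts cocompactly on $Y$ and without self-intersections, $G_{H_0}$ acts cocompactly on $N(H_0)\cap Y$ (cubes of a single $G$-orbit meeting $H_0$ form one $G_{H_0}$-orbit, again by without self-intersections), so $V$ is $G_{H_0}$-cocompact and $Y' = Y \cup G.V$ is $G$-cocompact; being convex and $G$-invariant, $Y'$ is a core. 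Finally, for the representatives: $C \subseteq Y$ and $D \subseteq V \subseteq Y'\setminus Y$ lie in disjoint $G$-invariant sets, so their orbits are distinct, and $C$ already lists the orbits meeting $Y$; while for $v,v' \in V$ with $g.v = v'$ the unique hyperplane separating $g.v$ from $Y$ is both $g.H_0$ and $H_0$, so $g \in G_{H_0}$, whence $G$-equivalence on $V$ coincides with $G_{H_0}$-equivalence and $D$ lists exactly the orbits meeting $Y'\setminus Y$. Therefore $C' = C \sqcup D$ is a set of orbit representatives for the action of $G$ on the vertices of $Y'$.
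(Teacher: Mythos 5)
Your proposal is correct, and it reaches the conclusion by a genuinely different route from the paper. The paper's key step is a geodesic-shortening argument: it takes a shortest combinatorial geodesic from a vertex of $Y'\setminus Y$ to $Y$, uses ``without self-intersections'' to see that the last two hyperplanes crossed cannot both be translates of $H_0$, deduces that they must cross each other, and then invokes specialness (no inter-osculation) to produce a square and shorten the path; this yields $Y'\subseteq N_1(Y)$ first, and the uniqueness of the edge to $Y$ and of the translate of $V$ are extracted afterwards. You instead work entirely with the half-space combinatorics: you obtain the partition $Y'\setminus Y=\bigsqcup_{g H_0} g.V$ directly by showing $(g_1H_0)^-\cap(g_2H_0)^-=\emptyset$ for distinct translates via the cross/nested/parallel trichotomy (crossing killed by ``without self-intersections,'' nesting killed because both translates bound $Y$, and the parallel corner empty by the standard fact that two disjoint hyperplanes have exactly one empty quadrant), and you get $Y'\subseteq N_1(Y)$ from the standard identity $d(w,Y)=\#\{H\in\mathcal{D}(Y): w\in H^-\}$. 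Your route buys a cleaner derivation of the partition and, notably, makes explicit the final bookkeeping that the paper leaves implicit (that $g.v=v'$ with $v,v'\in V$ forces $g\in G_{H_0}$, so $G$-orbits on $Y'\setminus Y$ biject with $G_{H_0}$-orbits on $V$); the paper's route is more self-contained relative to the lemmas it quotes, since it never needs the ``distance equals number of separating hyperplanes'' fact nor the quadrant trichotomy. Two small points you should make explicit if you write this up: cite or prove the gate/projection identity for $d(w,Y)$, and justify injectivity of your map $V\to \partial N(H_0)^+\cap Y$ (two distinct edges dual to $H_0$ at a common vertex would contradict $N(H_0)\cong H_0\times[0,1]$, equivalently specialness), since without injectivity the transfer of cocompactness from $N(H_0)\cap Y$ to $V$ needs an extra word.
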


\begin{proof}
Recall that $CAT(0)$ implies special.

First note that $\mathcal{D}(Y') = \mathcal{D}(Y) \setminus G. \{H_0 \}$ by definition and $\mathcal{B}(Y) \setminus G. \{H_0 \} \subset \mathcal{B}(Y')$ as a bounding hyperplane $Y$ still bounds $Y'$ unless it is a translate of $H_0$.

The set of half-spaces containing $Y$ is invariant under $G$, hence $Y'$ is invariant. The subcomplex $Y'$ is an intersection of half-spaces, hence convex.
Suppose $v \in Y' \setminus Y$. Let $v_0, v_1 \ldots v_k$ be a combinatorial geodesic from $v$ to $Y$ of shortest length with edges $e_1, \ldots e_k$ and suppose $k>1$. Let's $H_i$ be the hyperplane dual to $e_i$. Then as $v_{k-1} \notin Y$, we have $H_k \in G. \{H_0\}$.
Since $G$ acts on $X$ without self-intersections $H_{k-1} \notin G. \{H_0\}$. And $H_{k-1} \notin \mathcal{D}(Y')$, because $v_0, v_k \in Y'$ and $Y'$ is convex, so $e_{k-1} \in Y'$

Therefore $H_{k-1} \notin \mathcal{D}(Y)$. It must intersect $Y$, so it is not entirely contained in $H_k^-$ and it intersects $H_k$.
Because the cube complex is special, $H_k$ and $H_{k-1}$ do not interosculate. In particular, there is a square with two consecutive sides $e_{k-1}$ and $e_k$. Let $e'_j$ be the edge opposite $e_j$ in this square. By Lemma \ref{bounding} $H_{k-1}$ does not bound $Y$ and  $e_{k-1}' \in Y$. We can now construct a shorter path from $v_0$ to $Y$ with edges $e_1, \ldots, e_{k-2}, e'_{k}$. Contradiction.

So $k \leq 1$ and $Y'$ lies in a $1$-neighbourhood of $Y$ and therefore the action is cocompact.

There is a unique edge connecting $v \in Y' \setminus Y$ to $Y$ as any path of length $2$ is a geodesic or is contained in some square. In the first case by convexity of $Y$, we have $v \in Y$. In the second, $H_0 \notin \mathcal{D}(Y)$.

By invariance of $Y$, the $G$-translates of $V$ do not intersect $Y$. Suppose $v \in Y' \setminus Y$. There is a unique hyperplane in $G. \{H_0 \}$ dual to an edge $e_1$, which connects $v$ to $Y$, say $g.H_0$. Then $v$ belongs to a unique translate of $V$, namely $g.V$.
\end{proof}

\begin{corollary}
Let $\mathcal{G}$ be a finite simplicial graph.
If $K$ is a subgroup of a right-angled Coxeter group $C(\mathcal{G})$ and it acts on the Davis-Moussong complex with core $Y$, then deletion produces another core.
\end{corollary}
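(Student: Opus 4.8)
The plan is to read this corollary as a direct application of Lemma \ref{lemmaA}, so the entire task reduces to verifying the two hypotheses of that lemma in the case $X = DM(\mathcal{G})$ and $G = K$: that $X$ is a locally compact $CAT(0)$ cube complex, and that $K$ acts on $X$ without self-intersections. The first point is essentially standard. The Davis--Moussong complex is $CAT(0)$ (and hence special, as recalled above), and because $\mathcal{G}$ is finite, each vertex $hv_0$ has valence $|V(\mathcal{G})|$ and lies in cubes of dimension at most the size of a largest clique of $\mathcal{G}$; thus each vertex meets only finitely many cubes, $X$ is locally finite, and therefore locally compact. So the real content is the no-self-intersection condition.

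For that condition I would first reduce from $K$ to the ambient group. Since acting without self-intersections only quantifies over elements $g$ of the acting group, it suffices to prove that $C(\mathcal{G})$ itself acts on $DM(\mathcal{G})$ without self-intersections; the corresponding statement for the subgroup $K \le C(\mathcal{G})$ is then immediate. Thus I must show that for every hyperplane $H$ and every $g \in C(\mathcal{G})$, the relation $N(gH) \cap N(H) \neq \emptyset$ forces $gH = H$. The structural input is that the left action of $C(\mathcal{G})$ preserves the edge labelling: $g$ sends the edge $he_{s_i}$ joining $hv_0$ to $hs_iv_0$ to the edge $ghe_{s_i}$ joining $ghv_0$ to $ghs_iv_0$, again labelled $s_i$. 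Consequently $H$ and $gH$ are always dual to edges carrying the same label, and the claim becomes: two hyperplanes of $DM(\mathcal{G})$ that share a label and whose carriers meet must coincide.

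The crux is a local analysis of the intersection $N(H) \cap N(gH)$. I would argue that, as $N(H)$ and $N(gH)$ are unions of closed cubes and cubes in $DM(\mathcal{G})$ meet in common faces, the intersection $N(H)\cap N(gH)$ is a subcomplex, so if nonempty it contains a vertex $v$. Being a vertex of the carrier of $H$, $v$ is a corner of some cube meeting $H$, and hence $v$ is incident to an edge dual to $H$; likewise $v$ is incident to an edge dual to $gH$. Now the decisive combinatorial fact: because each generator $s_i$ is an involution, at every vertex there is exactly one edge of a given label, namely the single edge $he_{s_i}$. Since $H$ and $gH$ share a label by the previous paragraph, the two edges at $v$ dual to $H$ and to $gH$ must be the very same edge, whence $H = gH$. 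This also neatly subsumes the two ways distinct hyperplanes could interact: they cannot cross, since the two hyperplanes through a square carry the two \emph{distinct} commuting labels spanning it, and they cannot osculate, since that would demand two distinct same-labelled edges at $v$.

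The \textbf{main obstacle} is making this localization precise: verifying that a nonempty carrier intersection genuinely contains a vertex, and that such a vertex is incident to an edge dual to each of the two hyperplanes, so that the label-counting of the last step applies. Once that geometric reduction is secured, the uniqueness of the labelled edge at a vertex and the label-invariance of the action close the argument, and the corollary follows by invoking Lemma \ref{lemmaA} with $G = K$ and $X = DM(\mathcal{G})$.
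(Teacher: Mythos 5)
Your proposal is correct and follows essentially the same route as the paper: both reduce the corollary to Lemma \ref{lemmaA} and verify the no-self-intersection hypothesis using the fact that the left action of $C(\mathcal{G})$ on $DM(\mathcal{G})$ preserves edge labels. If anything, your verification is the more complete one---the paper disposes of the condition in a single line via the distinct labels on consecutive edges of a square (which only visibly rules out crossings), whereas you also treat the case where the carriers merely share a vertex, using the uniqueness of the edge of a given label at each vertex.
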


\begin{proof}
The Davis-Moussong complex $DM(\mathcal{G})$ is a $CAT(0)$ cube complex, hence simply connected and special. The action of $C(\mathcal{G})$ on it preserves labels. In this complex any two consecutive edges have distinct labels, so the action is without self-intersections. The restriction to $K$ is also without self-intersections.
\end{proof}

\begin{lemma} \label{intersection}
Suppose $G$ acts on a $CAT(0)$ cube complex $X$ with core $Y$.
If $Y' \subset X$ is constructed from $Y$ using a deletion  of $H = H(e)$, then each edge  in $V = H^- \cap Y'$ is dual to a hyperplane intersecting $H$.
\end{lemma}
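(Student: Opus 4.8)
The plan is to take an arbitrary edge $f$ of the vertebra $V = H^- \cap Y'$, set $K = H(f)$, and exhibit four vertices of $X$ that witness $H$ and $K$ crossing. The first thing I would record is that $K \neq H$: since $f$ lies in $V \subset H^-$, both endpoints of $f$ are on the same side of $H$, so $H$ does not separate them and is therefore not the hyperplane dual to $f$.

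Next I would attach to each endpoint of $f$ a nearby vertex of the core. Write $f = (v_1,v_2)$ with $v_1,v_2 \in H^- \cap Y'$. Because $H$ is disjoint from $Y$, we have $Y \subset H^+$, and hence $v_i \in H^-$ forces $v_i \in Y' \setminus Y$. By Lemma \ref{lemmaA} each $v_i$ is joined to $Y$ by a unique edge $e_i$ whose dual hyperplane is a $G$-translate of $H$; moreover this edge must be dual to $H$ itself, since its $Y$-endpoint $w_i$ lies in $H^+$ while $v_i$ lies in $H^-$, so $H$ separates the two ends of $e_i$. Thus $e_1,e_2$ are both dual to $H$, with $w_1,w_2 \in Y \subset H^+$ and $v_1,v_2 \in H^-$.

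The key step is a transversality count. Since $K \neq H = H(e_i)$, the hyperplane $K$ is not dual to $e_i$, so $v_i$ and $w_i$ lie on the same side of $K$; as $K$ separates $v_1$ from $v_2$, it therefore separates $w_1$ from $w_2$. Writing the two sides of $K$ as $K^a \ni v_1,w_1$ and $K^b \ni v_2,w_2$ (with $a \neq b$), the four vertices occupy all four intersections of half-spaces: $w_1 \in H^+ \cap K^a$, $v_1 \in H^- \cap K^a$, $w_2 \in H^+ \cap K^b$, and $v_2 \in H^- \cap K^b$. Since all four quadrants are non-empty, $H$ and $K$ cross, which is the standard criterion for two hyperplanes of a $CAT(0)$ cube complex to intersect. (Equivalently: $K$ separates $w_1,w_2 \in Y$, so by convexity of $Y$ the geodesic between them, and hence $K$, meets $H^+$, while $K$ also meets $H^-$ at the midcube of $f$; a hyperplane disjoint from $H$ would lie in a single half-space, a contradiction.)

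I expect the main obstacle to be the bookkeeping in the second step: confirming that both endpoints of $f$ genuinely lie in $Y' \setminus Y$ so that Lemma \ref{lemmaA} applies, and that the edge $e_i$ it supplies is dual to $H$ rather than to some other $G$-translate of $H$. Once the four vertices $w_1,v_1,v_2,w_2$ are correctly situated relative to $H$ and $K$, the crossing conclusion is immediate.
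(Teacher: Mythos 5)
Your argument is correct, but it takes a genuinely different route from the paper's. The paper works directly with the hyperplane $K=H(f)$ dual to the given edge $f$ of $V$: if $K\cap H=\emptyset$ then $K$, being connected and meeting $H^-$ at the midcube of $f$, lies entirely in $H^-$, hence is disjoint from $Y\subset H^+$; since $f\subset Y'$ has its endpoints on opposite sides of $K$ and $Y'=\bigcap_{H''\in\mathcal{D}(Y)\setminus G.\{H\}}(H'')^+$, this forces $K=gH$ for some $g\in G$; but then $G$-invariance of $Y$ makes $K$ a bounding hyperplane of $Y$, i.e.\ dual to an edge with an endpoint in $Y\subset H^+$, contradicting $K\subset H^-$. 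You instead exhibit all four quarter-spaces of the pair $(H,K)$ as nonempty via the vertices $w_1,v_1,v_2,w_2$; that criterion is valid in a $CAT(0)$ cube complex, and your parenthetical variant (convexity of $Y$ pushes the geodesic from $w_1$ to $w_2$, hence $K$, into $H^+$, while $K$ meets $H^-$ at the midcube of $f$) is the cleanest way to close. The trade-off is that the paper's proof uses only the definition of deletion and the $G$-invariance of $Y$, whereas yours needs the additional geometric input that every vertex of $V$ is joined to $Y$ by an edge dual to $H$.

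That input is the one place you should be careful. You extract it from Lemma \ref{lemmaA}, but Lemma \ref{lemmaA} carries the hypothesis that $G$ acts \emph{without self-intersections}, and that hypothesis is used precisely in the step you rely on: it is what rules out two distinct $G$-translates of $H$ being dual to consecutive edges of a geodesic leaving $Y$, which is what confines $Y'$ to the $1$-neighbourhood of $Y$. Lemma \ref{intersection} as stated makes no such assumption, and the paper's proof does not need it, so as written your argument establishes the lemma only under that extra hypothesis. In context this costs nothing --- the lemma is only invoked, via Corollary \ref{commutation}, for subgroups of $C(\mathcal{G})$ acting on $DM(\mathcal{G})$, where the corollary following Lemma \ref{lemmaA} verifies the no-self-intersection condition --- but to prove the statement in the generality in which it is phrased you should replace the appeal to Lemma \ref{lemmaA} by the paper's direct argument about $K$ itself.
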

\begin{proof}
Let $e'$ be an edge in $V$ and $H'$ a hyperplane dual to $e'$. If $H' \cap H = \emptyset$, $H'$ is contained entirely in $H^-$. But then $H'$ is disjoint from $Y$. In particular one of the endpoints of $e'$ is in the opposite half-space of $X \backslash \backslash H'$ than $Y$.

Since $Y'$ is the intersection of all half-spaces containing $Y$ with the exception of the  $G$-translates of $H^+$, the hyperplane $H'$ is $gH$ for some $g \in G$.

The subcomplex $Y$ is $G$-invariant and $H$ bounds $Y$, hence $H'$ bounds $Y$. This contradicts $H' \subset H^-$.
\end{proof}

\begin{corollary} \label{commutation}
Suppose $G<C(\mathcal{G})$ acts on $DM(\mathcal{G})$ with core $Y$.
If $Y' \subset X$ is constructed from $Y$ using a deletion of $H = H(e)$, then each edge in $V = H^- \cap Y'$ has a label which commutes with the label of $e$.
\end{corollary}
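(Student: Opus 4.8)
The plan is to reduce the statement to the elementary fact that two hyperplanes of $DM(\mathcal{G})$ can cross only if their labels commute, and then to feed this into Lemma~\ref{intersection}. First I would recall that the $C(\mathcal{G})$-action on $DM(\mathcal{G})$ preserves edge labels and that all edges of a fixed wall carry a common label; consequently every hyperplane has a well-defined label in $S$. Write $s$ for the label of $e$, so that $H = H(e)$ carries label $s$.

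Next, take an arbitrary edge $e'$ of the vertebra $V = H^- \cap Y'$ and let $H' = H(e')$ be its dual hyperplane, with label $s'$. By Lemma~\ref{intersection}, $H'$ intersects $H$. Since $DM(\mathcal{G})$ is $CAT(0)$, two distinct hyperplanes are either disjoint or cross transversally inside a cube; in the crossing case there is a $2$-cube (a square) two of whose consecutive edges $f$ and $f'$ are dual to $H$ and $H'$ respectively. As parallel edges share labels, $f$ carries label $s$ and $f'$ carries label $s'$.

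It then remains to read the commutation relation off this square. A square of $DM(\mathcal{G})$ is by construction one of the $2$-cubes spanned by a pair of commuting generators, so the labels on its two pairs of opposite edges are distinct generators $s, s'$ with $\{s, s'\} \in E(\mathcal{G})$, i.e.\ $s s' = s' s$ in $C(\mathcal{G})$. Hence the label $s'$ of $e'$ commutes with the label $s$ of $e$, which is the desired conclusion.

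The only content beyond Lemma~\ref{intersection} is the geometric fact that intersecting hyperplanes of a $CAT(0)$ cube complex cross inside a common square, together with the identification of $DM(\mathcal{G})$-squares with commuting pairs of generators. I expect the former to be the step needing the most care: one must verify that $H$ and $H'$ are genuinely dual to two \emph{consecutive} edges of a single square (not merely meeting set-theoretically), after which the commutation relation follows immediately from the presentation of $C(\mathcal{G})$.
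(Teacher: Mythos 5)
Your proposal is correct and matches the paper's intent exactly: the paper states this corollary without proof as an immediate consequence of Lemma~\ref{intersection}, relying on precisely the facts you spell out (intersecting hyperplanes of a $CAT(0)$ cube complex share a square, and squares of $DM(\mathcal{G})$ are spanned by commuting generators). Your write-up simply makes that implicit argument explicit.
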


\begin{definition}[Deletion along a path, deletion with labels, tail]
Suppose $Y$ is a subcomplex of a $CAT(0)$ complex $X$ and a core for the action of $G$ on $X$. Suppose $p=e_1 e_2 \ldots e_n$ is a path in $X$, which starts in $Y$. \emph{The deletion of hyperplanes along the path $p$} is a subcomplex $Y' = \cap H^+$, where $H$ goes over hyperplanes disjoint from $Y$ and from $G.p$.

Suppose additionally that edges of $X$ are labelled in such a way that for every vertex and every label, there is precisely one edge starting at that vertex of the given label.  Suppose $v \in Y$, and $s_1, s_2, \ldots s_n$ is a sequence of edge labels, then \emph{the deletion with labels $s_1, s_2, \ldots s_n$ at $v$} is the \emph{deletion of hyperplanes along $p$}, where $p$ is a path $e_1, e_2, \ldots e_n$ starting at $v$ with $e_i$ labelled $s_i$.

Suppose $Y_n$ was built from $Y_0$ using a series of deletion of hyperplanes $H_1, \ldots H_n$. We call $T=Y_n \cap H_1^-$ \emph{a tail}.
\end{definition}

\begin{lemma} \label{reduce}
Suppose $\mathcal{G}$ is a finite simplicial graph. Suppose $\mathcal{G}^c$ is connected, $| \mathcal{G}| >1$ and $H$ acts on $DM(\mathcal{G})$ with a core $Y \subsetneq DM(\mathcal{G})$. Then there exists a core $Y'$ which can be obtained from $Y$ by deletion along a path $e_1, e_2 \ldots e_n$ with  the vertebra $Y' \cap H(e_n)^-$ a single vertex.
\end{lemma}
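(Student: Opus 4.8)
The plan is to induct on a complexity measure of the vertebra and to show that, as long as the vertebra is not a single vertex, one further deletion (or a short block of deletions) can be appended to the path so as to strictly decrease this measure. Since $Y \subsetneq DM(\mathcal{G})$ there is at least one bounding hyperplane $H(e_1)$; deleting it, which is legitimate by Lemma \ref{lemmaA} because the $C(\mathcal{G})$-action on $DM(\mathcal{G})$ is without self-intersections, produces a core $Y_1$ with vertebra $V_1 = H(e_1)^- \cap Y_1$. If $V_1$ is already a single vertex we are done, so we assume otherwise and run the inductive step.

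The measure I would use is the pair $(\dim V_i, \#\{\text{top-dimensional cubes of } V_i\})$ ordered lexicographically; I also keep track of the terminal vertex $v_i$ of $e_i$, which always lies in $V_i$. Two facts drive the reduction. First, since the vertebra is connected and has more than one vertex, some edge of $V_i$ emanates from $v_i$, and by Corollary \ref{commutation} its label $t$ commutes with the label $s_i$ of $e_i$. Second, if $r$ is any generator that does \emph{not} commute with $s_i$, then appending the edge at $v_i$ labelled $r$ is a valid deletion: this edge does not cross the $s_i$-hyperplane $H(e_i)$, so its far endpoint stays in $H(e_i)^-$, but that endpoint cannot lie in $V_i$ (every label occurring in $V_i$ commutes with $s_i$ by Corollary \ref{commutation}), hence it lies outside $Y_i$, so $H(e_{i+1})$ bounds $Y_i$.

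The hypothesis that $\mathcal{G}^c$ is connected enters precisely to break the clique that inflates the vertebra. The labels of a top-dimensional cube of $V_i$, together with $s_i$, form a clique of $\mathcal{G}$ and hence an independent set of $\mathcal{G}^c$; since $\mathcal{G}^c$ is connected with more than one vertex, there is a generator $r$ joined in $\mathcal{G}^c$ to this set, that is, failing to commute with one of its members. Choosing the next edge $e_{i+1}$ at $v_i$ with such a label $r$ keeps the deletion valid by the second fact above (we arrange $r$ non-commuting with $s_i$) while removing the offending direction from the admissible label set $N(r)$ of the new vertebra $V_{i+1}$, by Corollary \ref{commutation} and Lemma \ref{intersection}. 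This prevents the top-dimensional cube realising $\dim V_i$ from persisting and forces a strict drop of the measure. As the measure is bounded below, after finitely many deletions the vertebra becomes a single vertex and the resulting $Y'$ is the core we want.

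The main obstacle is the reduction step, and specifically producing an extension that simultaneously (i) is a bounding hyperplane of $Y_i$, so the deletion is legitimate, and (ii) destroys a top-dimensional clique of $V_i$, so the measure genuinely drops rather than merely trading one top-dimensional cube for another. Connectivity of $\mathcal{G}^c$ supplies a generator non-adjacent to some clique member, but matching this generator to the prescribed initial vertex $v_i$ of the next edge may first require appending a short run of deletions that walks $v_i$ to an extremal position of $V_i$ in the offending direction — a $t$-maximal vertex, where the commuting $t$-edge itself leaves $Y_i$ and so becomes deletable. Verifying that such a routing always exists, that each intermediate deletion is valid, and that the combined block strictly decreases the measure is the delicate part of the argument, and it is exactly here that the global connectivity of $\mathcal{G}^c$, rather than any purely local commutation data around $s_i$, is indispensable.
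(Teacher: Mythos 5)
There is a genuine gap, and you have in fact located it yourself: the entire difficulty of the lemma is the "routing" step that your last paragraph defers. Your setup (iterate deletions, note by Corollary \ref{commutation} that every label in the vertebra commutes with the last-deleted label, and use connectivity of $\mathcal{G}^c$ to find a generator $b$ not commuting with some label occurring in the vertebra) matches the paper, but you never establish how to legally append deletions until one labelled $b$ becomes available. A single deletion labelled $r$ at $v_i$ is only guaranteed to be a deletion of a \emph{bounding} hyperplane when $r$ fails to commute with the previous label $s_i$, and the generator $b$ supplied by connectivity of $\mathcal{G}^c$ need not have this property. The paper's resolution is to take a vertex path in $\mathcal{G}^c$ from the last-deleted label to $b$ and delete along it: consecutive vertices of $\mathcal{G}^c$ are non-commuting generators, so each successive hyperplane bounds the current core, and the label set of the vertebra satisfies $S_j = \{s \in S_{j-1} : s s_j = s_j s\}$, hence is non-increasing along the whole walk and strictly drops at the final step because $a$ and $b$ do not commute. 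Your alternative suggestion --- walking $v_i$ to a ``$t$-maximal'' position inside the vertebra where a commuting edge becomes deletable --- is not developed, and it is not the mechanism that works; the walk must happen in the complement graph $\mathcal{G}^c$, not in the vertebra.

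A secondary issue is your complexity measure. You propose the lexicographic pair $(\dim V_i, \#\{\text{top-dimensional cubes of } V_i\})$, but you do not verify that it is monotone under the intermediate deletions of the routing block, and it is not clear that it is: an intermediate deletion can reshape the vertebra without touching its dimension or cube count in a controlled way. The paper instead uses the set $S_i$ of labels occurring in the vertebra, ordered by inclusion; the explicit formula for $S_j$ above shows this set never grows during the block and strictly shrinks at its end, which is exactly the well-founded descent you need. With that measure and the $\mathcal{G}^c$-path mechanism in place, your outline becomes the paper's proof; without them, the inductive step is unproved.
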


\begin{remark}
The hypothesis that $\mathcal{G}^c$ is connected is necessary. Consider the situation when $\mathcal{G}$ is a square.
Then $C(\mathcal{G}) = D_\infty \times D_\infty$ and $DM(\mathcal{G})$ is the standard tiling of $\mathbb{R}^2$. Let $H = D_\infty$ be the subgroup generated by two non-commuting generators of $C(\mathcal{G})$. The invariance of the core and cocompactness of the action imply that any core for $H$ is of the  form $\mathbb{R} \times [k,l]$ for some $k,l \in \mathbb{Z}$.

Every hyperplane intersecting such a core divides it into two infinite parts.
\end{remark}

\begin{proof}[Proof of Lemma \ref{reduce}]
Since $Y$ is a proper subcomplex, there exists $e_1$ be such that $H(e_1) = H_1$ bounds $Y$. Let $v_0$ be the endpoint of $e_1$, which lies in $Y$. Let $v_1$ be the other endpoint. Say the label of $e_1$ is $s_1$. Let $Y_1$ be a cube complex obtained from $Y$ by deletion of $H_1$.

Let $S_1$ be the set of generators labelling the edges of vertebra $V_1$. Then by Corollary \ref{commutation}, $s_1$ commutes with all generators in $S_1$.

 If $e_2 \notin V_1$ is an edge with endpoint $v_1$, whose label $s_2$ does not commute with $s_1$, we can define $H_2, Y_2, V_2$ and $S_2$ similarly as before.  Just as before the generators of $S_2$ commute with $s_2$.

The hyperplanes $H_1$ and $H_2$ do not intersect, so $N(H_2) \subset H_1^-$.  There is an inclusion of $V_2$ into $V_1$ given by sending a vertex of $V_2$ to the unique vertex of $V_1$ to which it is connected by an edge labelled $s_2$. Extending this map to edges and cubes is a label preserving map between cube complexes $V_2$ and $V_1$.
It follows that $S_2$ is a (not necessarily proper) subset of $S_1$.

We will now show that, by a series of such operations, we can reach a situation where $S_n = \emptyset$. I.e. the vertebra $V_n$ is a single vertex.

Suppose we have already applied deletion $i$ times and $S_i$ is non-empty. We will use a series of deletions to get $S_{k+1} \subsetneq S_{k} \subset S_{k-1} \subset \ldots \subset S_{i+1} \subset S_i$. By an abuse of notation, we'll identify the vertices of $\mathcal{G}^c$ with the labels and with the generators of the right-angled Coxeter group. (Rather than having a generator $s_v$ for every vertex $v \in V(\mathcal{G})$ and using these as labels.)

Since the group does not split as a product, there exists some $a \in S_i$ and $b \notin S_i$ which do not commute.
Since $\mathcal{G}^c$ is connected, there exists a vertex path $s_{i-1}, \ldots s_k = b$ in $\mathcal{G}^c$ from the vertex $s_{i-1}$, which is the label of the hyperplane we removed last.

Successive generators in this path do not commute. Indeed assume that $s_j$ and $s_{j+1}$ commute. Take $v \in DM(\mathcal{G})$, let $e_1, e_2, e_3, e_4$ be edges of the path starting at $v$ with labels $s_j, s_{j+1}, s_j, s_{j+1}$. This is a closed loop, since $s_j$ and $s_{j+1}$ commute. The hyperplane $H(e_1)$ separates $v$ from $s_j s_{j+1} v$, so it has to be dual to one of $e_3$ and $e_4$. Parallel edges have the same labels so $H(e_1) = H(e_3)$. Similarly $H(e_2) = H(e_4)$. The hyperplane $H(e_1)$ separates $e_2$ from $e_4$, so $H(e_1)$ and $H(e_2)$ have to cross. The Davis-Moussong complex is special, so there is a square where $e_1$ and $e_2$ are successive edges. By construction of the complex, $s_j$ and $s_{j+1}$ are connected by an edge is $\mathcal{G}$. This contradicts adjacency of $s_j$ and $s_{j+1}$ in $\mathcal{G}^c$.

Apply deletion of hyperplanes labelled $s_i, \ldots , s_k$ starting at some vertex of $v \in V_{i-1}$. Note that the $j$th hyperplane we remove belongs in a subset of $\mathcal{B}(Y_{j-1})$ as $s_i \ldots s_{j-1} v \in V_{j-1}$ and $s_j$ does not commute with $s_{j-1}$. Moreover, $S_j = \{s \in S_{j-1} : ss_j = s_j s\}$. In particular, $S_{k+1} \subset S_i$ and $a$ does not belong to $S_{k+1}$ as $as_k \neq s_ka$. Similarly, the hyperplane $H'$ dual to edge between $v$ and $s_{j+1}$ is dual.

Therefore $S_{k+1}$ is a proper subset of $S_i$ and we can continue this process until we get an empty $S_n$.
\end{proof}

\begin{remark} \label{controllabel}
We can even control the label of the hyperplane which was removed last. Indeed, if the last removed hyperplane had label $s_i$, and $b$ is some other generator, pick a vertex path between $s_i$ and $b$ in $\mathcal{G}^c$. Then remove hyperplanes labelled by vertices on this path, starting at the unique vertex of a vertebra.
\end{remark}

By Lemma \ref{lemmaA} there is a set of orbit representatives $K$ for the action of $G$ on $Y_n$ with $T \subset K$.

Haglund shows the following \cite[Proof of Theorem A]{haglund2008finite}.

\begin{lemma} \label{Scott}
Suppose $G < C(\mathcal{G})$ acts on $DM(\mathcal{G})$ with a core $Y$ and with a set of orbit representatives $K$. Let $\Gamma_0 <C(\mathcal{G})$ be generated by the reflections in the hyperplanes bounding $Y$. Let $\Gamma_1 = \Gamma_1 (Y)= \langle G, \Gamma_0 \rangle$.
Then $Y$ is a fundamental domain for the action of $\Gamma_0$ on $X$ and $K$ is a set of orbit representatives for the action of $\Gamma_1$ on $X$.
\end{lemma}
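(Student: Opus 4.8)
The plan is to deduce the statement about $\Gamma_1$-orbits from the single assertion that $Y^{(0)}$ is a set of orbit representatives for the action of $\Gamma_0$ on $X^{(0)}$, and then to prove that ``fundamental domain'' assertion by a reflection-group argument carried out inside the cube complex. First I would record how $G$ and $\Gamma_0$ interact. Since $Y$ is $G$-invariant, every $g \in G$ sends an edge crossing $\partial Y$ to another such edge, hence permutes $\mathcal{B}(Y)$; as $g r_H g^{-1} = r_{gH}$, conjugation by $g$ permutes the generating reflections of $\Gamma_0$, so $G$ normalises $\Gamma_0$ and $\Gamma_1 = \Gamma_0 G = G \Gamma_0$. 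Granting that $Y^{(0)}$ is a transversal for $\Gamma_0$ acting on $X^{(0)}$, part 2 is then formal: every vertex $v$ is carried into $Y$ by some $\gamma \in \Gamma_0$ and then into $K$ by some $g \in G$ (using that $G$ preserves $Y$ and $K$ is a $G$-transversal of $Y^{(0)}$), so $K$ meets every $\Gamma_1$-orbit; and if $\phi k = k'$ with $k, k' \in K$ and $\phi = \gamma g \in \Gamma_0 G$, then $\gamma(gk) = k'$ with $gk, k' \in Y^{(0)}$, so uniqueness forces $gk = k'$, whence $k, k'$ lie in one $G$-orbit and $k = k'$.

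For the covering half of the fundamental-domain claim I would induct on the combinatorial distance $d(v, Y)$. If $v \notin Y$, choose a combinatorial geodesic $v = v_0, \dots, v_k = \pi_Y(v)$ to the gate; its last edge $e_k = [v_{k-1}, v_k]$ has $v_k \in Y$ and $v_{k-1} \notin Y$, so $H_k := H(e_k)$ lies in $\mathcal{B}(Y)$ and $r := r_{H_k} \in \Gamma_0$. Since a reflection swaps the endpoints of the edges dual to it, $r v_{k-1} = v_k \in Y$, so $r v_0, \dots, r v_{k-1}$ is an isometric copy of a geodesic of length $k-1$ ending in $Y$; thus $d(rv, Y) \le k - 1$ and the induction closes. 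This gives $X^{(0)} = \Gamma_0 \cdot Y^{(0)}$. The same ``last edge'' observation shows moreover that $Y = \bigcap_{H \in \mathcal{B}(Y)} H^{+}$: any $w \notin Y$ is separated from $Y$ by the bounding hyperplane dual to the last edge of a geodesic from $w$ to $\pi_Y(w)$, so the bounding half-spaces already cut out $Y$.

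The uniqueness half is where the real work lies, and I expect it to be the main obstacle. It is equivalent to the statement that no wall in the orbit $\mathcal{W}_0 := \Gamma_0 \cdot \mathcal{B}(Y)$ separates two vertices of $Y$, i.e. that $Y$ is a single chamber of the wall system $\mathcal{W}_0$. Granting this, if some $\gamma \in \Gamma_0 \setminus \{e\}$ satisfied $\gamma y = y'$ with $y, y' \in Y^{(0)}$, then writing $\gamma = r_{H_1} \cdots r_{H_m}$ reduced (with $H_i \in \mathcal{B}(Y)$) and applying the exchange/deletion condition for the reflection group $\Gamma_0$ produces a bounding wall with $Y$ on its positive side and $\gamma Y$ on its negative side; since $y' \in \gamma Y$ while $y' \in Y$, and no vertex lies on a hyperplane, this is a contradiction, giving $y = y'$. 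The crux is therefore to show that $\{r_H : H \in \mathcal{B}(Y)\}$ is a Coxeter generating set for $\Gamma_0$ with fundamental chamber exactly $Y$ (equivalently, that the ``dominant region'' $\bigcap_{W \in \mathcal{W}_0} W^{+}$ equals $Y$). I would establish this using the reflection-subgroup theory of Deodhar and Dyer for the reflection subgroup $\Gamma_0 \le C(\mathcal{G})$, pinning the canonical simple reflections down to the bounding reflections via the identity $Y = \bigcap_{\mathcal{B}(Y)} H^{+}$ and the covering proved above. A more hands-on alternative, better suited to the tools of this paper, is to build a $\Gamma_0$-equivariant retraction $X^{(0)} \to Y^{(0)}$ by iterating the distance-decreasing reflection of the previous paragraph, and to prove it well defined by a local-confluence (diamond) argument: when two such distance-decreasing reflections apply at a vertex $v$, specialness of $DM(\mathcal{G})$ controls the dihedral subgroup generated by the two bounding walls and forces the two reflected vertices to reduce to a common vertex of $Y$. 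Verifying this confluence, especially when the two walls are disjoint, is the delicate point on which the argument turns.
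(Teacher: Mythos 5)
The paper does not actually prove this lemma: it is quoted verbatim from Haglund (the citation to the proof of Theorem A of \cite{haglund2008finite}), so there is no internal argument to compare against. Measured against what such a proof must contain, your proposal gets the easy parts right and leaves the hard part open. The normalisation argument ($G$ permutes $\mathcal{B}(Y)$, $g r_H g^{-1} = r_{gH}$, hence $\Gamma_1 = \Gamma_0 G$) and the formal deduction of the $\Gamma_1$-statement from strictness of the $\Gamma_0$-fundamental domain are correct. The covering half is also fine: reflecting in the wall dual to the last edge of a combinatorial geodesic from $v$ to $Y$ strictly decreases $d(\cdot, Y)$, since that wall bounds $Y$ and the reflection carries the geodesic to one of length $k-1$ ending in $Y$.

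The genuine gap is the uniqueness half, which you correctly identify as the crux but do not close. Your first route (exchange/deletion for $(\Gamma_0, \{r_H\})$ plus Deodhar--Dyer) is close to circular as stated: to know that $\{r_H : H \in \mathcal{B}(Y)\}$ is the canonical generating set of the reflection subgroup $\Gamma_0$ with fundamental chamber $Y$, and that a reduced word produces a bounding wall separating $Y$ from $\gamma Y$, is essentially the statement being proved. Your second route (an equivariant retraction verified by local confluence) is a real strategy, but you explicitly leave unverified the case of two disjoint bounding walls, which is exactly where it can fail naively and where convexity of $Y$ must be used. The standard way to finish is Tits' simple-transitivity argument run on the wall system $\mathcal{W}_0 = \Gamma_0 \cdot \mathcal{B}(Y)$: one proves by simultaneous induction on the word length of $\gamma = r_{H_1} \cdots r_{H_m}$ in the bounding reflections that the walls $H_1,\ r_{H_1} H_2,\ \ldots,\ r_{H_1} \cdots r_{H_{m-1}} H_m$ are pairwise distinct, each separates $Y$ from $\gamma Y$, and none crosses $Y$ (using that a bounding wall of a convex subcomplex cannot also be dual to an edge of that subcomplex); in particular $\gamma Y \cap Y = \emptyset$ for $\gamma \neq e$, which gives strictness. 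Without that induction (or an appeal to Haglund's Theorem A, as the paper makes), the proof is incomplete.
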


Let $C(\mathcal{G})$ act on the right cosets of $\Gamma_1 < C(\mathcal{G})$. We have that $s \in S$ sends $\Gamma_1 g$ to $\Gamma_1 g s = (\Gamma_1 g s g^{-1}) g$. But $gs g^{-1}$ is a reflection in the hyperplane $H(ge_s)$. By definition of $\Gamma_0$ if $H(ge_s)$ bounds $Y$, $gs g^{-1} \in \Gamma_0$ and $\Gamma_1 g$ is fixed by $s$.

Moreover, if $K = \{g_1 v_0, \ldots ,g_n v_0 \}$, then $\{g_1, \ldots g_n\}$ is a set of right coset representatives for $\Gamma_1$.

We will first prove that by a suitable sequence of deletions, we can satisfy the conditions of Jordan's theorem. It follows that we can construct quotients that are either alternating or symmetric.

\begin{definition}
If $Y$ is a subset of a cube complex $X$, then $N_1(Y)$ is a union of closed cubes, which have non-empty intersection with $Y$. We define inductively $N_i(Y) = N_1(N_{r-1}(Y))$.
\end{definition}

If $Y$ is convex, then so is $N_r(Y)$ (as a neighbourhood is obtained by removing bounding hyperplanes and therefore it is an intersection of convex subcomplexes). And if $H$ acts cocompactly on $Y$, it still acts cocompactly on $N_r(Y)$ assuming that $X$ is locally compact.

\begin{proposition}
Let $C(\mathcal{G})$ be the right-angled Coxeter group associated to $\mathcal{G}$ a finite simplicial graph, $|\mathcal{G}| >2$ , and suppose that $H < C(\mathcal{G})$ acts on the associated Davis-Moussong complex with a proper core $Y$. Let $\mathcal{C}$ be the class of symmetric and alternating groups. If $\mathcal{G}^c$ is connected, then $H$ is $\mathcal{C}$-separable.
\end{proposition}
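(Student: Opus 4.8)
The plan is to realise the separating maps as permutation actions of $C(\mathcal{G})$ on the right cosets of the finite-index subgroups $\Gamma_1 = \Gamma_1(Y)$ produced by Lemma \ref{Scott}, and to use Jordan's Theorem to force the image of such an action to be a full symmetric or alternating group. Write $f \colon C(\mathcal{G}) \to \mathrm{Sym}(K)$ for the coset action, where $K$ is a set of orbit representatives for $H$ on the core; by Lemma \ref{Scott}, $n := |K| = [C(\mathcal{G}) : \Gamma_1]$ is the degree. Since $H \le \Gamma_1$, the image $f(H)$ fixes the base coset $\Gamma_1$, so it suffices to arrange three things: first, separation, namely $\gamma_i \notin \Gamma_1$ for every $i$; second, a nontrivial element of $C(\mathcal{G})$ whose image moves only a bounded number of points; and third, primitivity together with $n$ large. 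Granting these, Jordan's Theorem yields $f(C(\mathcal{G})) \in \{A_n, S_n\}$, and $\gamma_i \notin \Gamma_1$ means $f(\gamma_i)$ moves the base coset, whence $f(\gamma_i) \notin f(H)$.

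I would first secure separation, which is the most robust step. Because the action of $C(\mathcal{G})$ on the vertices of $DM(\mathcal{G})$ is free, $\gamma_i \notin H$ forces $\gamma_i v_0$ and $v_0$ to lie in distinct $H$-orbits. Enlarge the given core to $N_r(Y)$ with $r$ large enough that all the finitely many vertices $\gamma_i v_0$ lie inside it; since $H$ has infinite index this is still a proper core. As $K$ is simultaneously a set of $H$-orbit representatives on the core and a set of $\Gamma_1$-orbit representatives on $DM(\mathcal{G})$ (Lemma \ref{Scott}), two core vertices in distinct $H$-orbits lie in distinct $\Gamma_1$-orbits, so $\gamma_i \notin \Gamma_1$. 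Crucially, deletions only enlarge the core, so once $\gamma_i v_0$ sits inside it this conclusion persists for every later core and separation is never lost.

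Next I would produce the short element using Lemma \ref{reduce}. Applying it to the enlarged core, together with the remark allowing control of the final label, yields a core whose last vertebra is a single vertex $w$, attached to the rest by one edge with a chosen label $s$. Reading off the coset action as in the discussion following Lemma \ref{Scott}, a generator $t$ moves the coset $\Gamma_1 g$ exactly when the edge $g e_t$ is internal to the core; hence the support of $f(s)$ is twice the number of $H$-orbits of internal $s$-edges, and the pendant contributes one such edge. Thus $f(s)$ is a nontrivial involution whose support $2c$ equals twice the number of internal $s$-edges. Finally, to make the degree a large prime I would perform further single-vertex deletions; transitivity of the coset action together with prime degree then gives primitivity, and Jordan's Theorem finishes the argument as above, the conclusion $f(C(\mathcal{G})) \in \{A_n,S_n\}$ being exactly membership in $\mathcal{C}$.

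The main obstacle is the tension between the second and third points: Jordan's Theorem requires the degree $n$ to exceed a threshold depending on the support bound $k$, yet every deletion enlarges the core and can only \emph{add}, never remove, internal edges of a given label, so naively growing $n$ may inflate the support of $f(s)$ and push $k$ up faster than the threshold. The resolution is to keep one fixed involution's support frozen while enlarging: having fixed $s$ and the count $c$, I would only delete hyperplanes whose labels do \emph{not} commute with $s$, which exist since $\mathcal{G}^c$ is connected and so $s$ has a neighbour there. By Corollary \ref{commutation} the resulting vertebra edges then carry labels commuting with the deleted label, hence never equal to $s$, so no new internal $s$-edge is created and the support of $f(s)$ stays equal to $2c$. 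Verifying that such label-controlled deletions remain available and can drive the degree through a prime above Jordan's threshold, without disturbing the frozen involution or the already-placed vertices $\gamma_i v_0$, is the delicate technical heart of the proof.
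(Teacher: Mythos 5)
Your skeleton matches the paper's: enlarge the core so the $\gamma_i v_0$ lie in it, pass to the coset action on $\Gamma_1(Y')$ via Lemma \ref{Scott}, observe that $f(H)$ stabilises the base coset while $f(\gamma_i)$ does not, and feed a prime degree plus a non-identity element of bounded support into Jordan's Theorem. The separation step and the prime-degree/primitivity step are fine. The gap sits exactly where you flag the ``delicate technical heart'': your recipe for freezing the support of the short element does not always run. You take the short element to be the pendant label $s$ and propose to grow the degree by deleting only hyperplanes whose labels fail to commute with $s$; but as in the proof of Lemma \ref{reduce}, each newly deleted hyperplane only bounds the current core (and each vertebra only stays a single vertex, so that the degree increases by exactly one per step via Lemma \ref{lemmaA}) if consecutive deleted labels also fail to commute with each other. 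So you need an arbitrarily long walk inside the subgraph of $\mathcal{G}^c$ induced on the neighbours of $s$, and such a walk need not exist. Concretely, let $\mathcal{G}$ be a complete graph on $t_1,\dots,t_m$ ($m\geq 2$) together with an isolated vertex $s$, so that $\mathcal{G}^c$ is a star centred at $s$: the hypotheses of the proposition hold, but the neighbours of $s$ in $\mathcal{G}^c$ pairwise commute, so any two consecutive deletions drawn from them have commuting labels; and if you instead take the pendant label to be some $t_i$, its only non-commuting partner is $s$, which commutes with itself. Either way the tail stalls after one deletion and you cannot push the degree to a large prime while keeping the support of $f(s)$ frozen.

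The paper avoids this by decoupling the short element from the tail labels. It builds the tail by alternating deletions with just two non-commuting labels $s_0,s_1$ (these exist whenever $\mathcal{G}^c$ is connected with at least two vertices, and the alternating walk $s_1,s_0,s_1,\dots$ is always available), and takes the short element to be a \emph{third} generator $s_3$ --- this is precisely where $|\mathcal{G}|>2$ is used. Since every vertebra is a single vertex, the only edges of the enlarged core incident to a tail vertex are the tail edges themselves, labelled $s_0$ or $s_1$; hence every $s_3$-edge at a tail vertex leaves the core, its dual hyperplane bounds the core, the corresponding reflection lies in $\Gamma_0$, and $s_3$ fixes every tail coset. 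Its support is therefore bounded by $|H\backslash Y|$ for the pre-tail core, independently of the tail length, while the degree grows by one per deletion. Replacing your frozen involution $f(s)$ by such an $f(s_3)$ repairs the argument; the rest of your proposal then goes through essentially as in the paper.
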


\begin{proof}
As $H$ acts with a proper core, there exists a generator of $C(\mathcal{G})$ not contained in $H$. Say $s_0 \notin H$.

Suppose $\gamma_1, \ldots, \gamma_n \notin H$.

Fix $v \in Y$. Without loss of generality, we may assume that $Y$ contains $N(v)$ and  $\gamma_i v$ for all $i$ (otherwise replace $Y$ with $N_r(Y)$ for a sufficiently large $r$). Moreover, by Lemma \ref{reduce} we may assume that there exists a hyperplane $H_0 \notin \mathcal{D}(Y)$ with $|H_0^-  \cap Y| = 1$ and by the Remark \ref{controllabel} we may assume that the label of $H_0$ is $s_0$.

As $\mathcal{G}^c$ is connected, there exists a generator $s_1$ not commuting with $s_0$. Let $v_0$ be the unique vertex of $H_0^- \cap Y$. Let $e_1$ be the edge starting at $v_0$ with a label $s_1$. Obtain $Y_1$ by deleting $H(e_1)$ from the boundary of $Y$. By Lemma \ref{lemmaA} $Y' \subset N(Y)$. If $v_1 \in Y' \cap H(e_1)^-$, then there is an edge starting at $v$ with the other endpoint in $Y$. This edge is labelled $s_1$ and is dual to $H(e_1)$. Now $N(H(e_1)) \cap Y = v_0$ since otherwise $H(e_1)$ would have to intersect $H_0$ and $s_1$ would commute with $s_0$. Therefore $v_1$ is uniquely determined as the other endpoint of $e_1$.

Continue this by taking $e_i$ to be the edge starting at $v_{i-1}$ labelled $s_0$ for even $i$ and $s_1$ for odd $i$ and let $v_i$ be the other endpoint of $e_i$. Let $Y_i$ be $Y_{i-1}$ with $H(e_i)$ deleted from the boundary. Let $Y' = Y_k$ with $k$ to be specified later. 

Let $\Gamma_0$ be the group generated by the reflections in the hyperplanes bounding $Y'$. Let $\Gamma_1 = \langle \Gamma_0, H \rangle$.
Then $[C(\mathcal{G}): \Gamma_1] = |H \setminus Y'|$, where $|H \setminus Y'|$ denotes the number of vertices of $H \setminus Y'$. Every successive vertebra consists of a single vertex, so by Lemma \ref{lemmaA} $|H \setminus Y_{i+1}| = |H \setminus Y_i| + 1$. We can choose $k$ to make $|H \setminus Y'|$ a prime.
As $V(\Gamma_1 \setminus C(\mathcal{G}))$ is in a natural bijection with $V(H \setminus Y')$ and $\gamma_i v \notin H.v$, we may choose $\gamma_i$ as one of the coset representatives. $\Gamma_1 . \gamma_i \neq \Gamma_1$. Since $H < \Gamma_1$, for every $h \in H$ we have $\Gamma_1 h = \Gamma_1$. Therefore $\gamma_i$ does not act as an element of $H$, i.e. $f(\gamma_i) \notin f(H)$.

Let $s_3$ be a generator distinct from $s_0$ and $s_1$.
By the remark after Lemma \ref{Scott}, we can identify the right cosets of $\Gamma_1$ with orbits of $Y'$ under the action of $H$ and we can read of the action from the geometry as follows. Pick $v \in Y$ in an orbit corresponding to $\Gamma_1 g$, let $u$ be a vertex connected to $v$ by an edge labelled $s$. If $u \notin Y'$, then $\Gamma_1 g = \Gamma_1 g s$. If $u \notin Y'$, then $\Gamma_1 g s$ is the coset corresponding to $H.u$.
Since the tail contains no edge labelled $s_2$, every coset corresponding to a vertex in the tail is fixed by $s_2$.

 So $s_2$ moves at most $|H \setminus Y|$ elements. By taking $k$ large enough while $|H \setminus Y'|$ is still a prime, we may ensure that the conditions of Jordan's lemma are satisfied (the primitivity follows from transitivity and a non-existence of non-trivial partition of a prime number of elements into sets of the same size).
\end{proof}

\section{Changing parity}
We shall now prove that we may force the action to be alternating (similarly we can force it to be symmetric). Let $\mathcal{G}$ be a non-discrete locally compact finite graph throughout this section.

\begin{definition}
Suppose $Y$ is a core for an action of $G< C(\mathcal{G})$ on a $DM(\mathcal{G})$.
\emph{The parity of $s_i$ with respect to the core $Y$} is the parity of $s_i$ acting on the right cosets of $\Gamma_1 (Y)$, where $\Gamma_1(Y)$ is the finite index subgroup of $C(\mathcal{G})$ generated by $G$ and the reflections in the hyperplanes bounding $Y$.
\end{definition}

We will modify the construction of the tail in order to make each $s_i$ act as an even permutation (or we will make at least one of $s_i$ acts as an odd permutation).

Suppose $g.v_0$ is in the tail. If the edge between $g.v_0$ and $gs.v_0$ is in the tail, then $g.v_0$ and $gs.v_0$ map to distinct vertices in $\Gamma_1 \setminus X$, hence $\Gamma_1 g \neq \Gamma_1 g s$.

If $gs.v_0$ is not in the tail, then the hyperplane dual to this edge bounds $Y$ and the reflection in this hyperplane belongs to $\Gamma_1$. Therefore  $\Gamma_1  = \Gamma_1 g s g^{-1}$ or equivalently $\Gamma_1 g = \Gamma_1 g s$.

More precisely, suppose $H$ acts with core $Y$ and $Y'$ is the core resulting from deletion of $H_0, \ldots, H_k$, and the label of $H_i$ is $s_i$. Moreover assume $H_0 \cap Y'$ is a single edge.

Then the parity of $s_1$ with respect to $Y'$ is the sum of the parity of $s_1$ with respect to $Y$ and the number of edges labelled $s_1$ in $H_0^- \cap Y'$.
So we can control the parity of $s_1$ by changing the number of edges with label $s_1$ in the tail.
Suppose that the conditions of Jordan's theorem are satisfied with a margin $M$ (i.e. the conditions are satisfied even if $s_3$ moves $|H \setminus Y| + M$ elements). Taking $M = (|\mathcal{G}|-2) (2d+1)+16$, where $d$ is the diameter of $\mathcal{G}^c$ will be sufficient.

First let us show that we can deal with parity of all generators other than $s_1$ and $s_2$.

\begin{lemma}\label{parity}
For any $i \in I \setminus \{1,2\}$, if the tail of $Y$ is a path with labels $s_1, s_2, \ldots s_1, s_2, s_1$ of length at least $2d_{\mathcal{G}^c}(v_1,v_i)+1$ starting at vertex $V$, then there exists a core $Y'$ such that in the associated action the parity of $s_i$ changed and the parities of no $s_j$ changed for $j \in I \setminus \{1,2,i\}$.
Moreover, $|H \setminus Y| = |H \setminus Y'|$ and $Y'$ contains a tail of the same length as $Y$ and the labels of these two paths are the same with the exception of a subpath labelled $s_1, s_2, \ldots s_1, s_2, s_1$ of length $2d_{\mathcal{G}^c}(v_1,v_i)+1$.
\end{lemma}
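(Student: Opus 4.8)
The plan is to read the parity of each generator off the tail via the formula recorded just before the lemma: for a core obtained from an earlier core by a sequence of deletions, the parity of a generator $s$ equals its parity for the earlier core plus, modulo $2$, the number of $s$-labelled edges created in the new portion of the tail. So it is enough to exhibit a second core $Y'$, built from the same earlier data as $Y$, whose new tail segment has the same length as that of $Y$ but whose label multiset contains $s_i$ an odd number of times and each generator other than $s_1,s_2,s_i$ an even number of times. The parities of $s_1,s_2$ are unconstrained and so play no role.

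Concretely, let $Y_0$ be the core obtained from $Y$ by undoing the alternating segment of length $2d+1$ emanating from $V$, where $d=d_{\mathcal{G}^c}(v_1,v_i)$; thus $Y$ is recovered from $Y_0$ by deleting, starting at $V$, the hyperplanes with labels $s_1,s_2,\dots,s_1$. Fix a geodesic $v_1=w_0,w_1,\dots,w_d=v_i$ in $\mathcal{G}^c$. Consecutive vertices $w_j,w_{j+1}$ are adjacent in $\mathcal{G}^c$, hence non-adjacent in $\mathcal{G}$, so the generators $w_j$ and $w_{j+1}$ do not commute; this is exactly the condition used in Lemma \ref{reduce} ensuring that each successive hyperplane bounds the current core, so deletions along such a label sequence are legitimate and produce, by Lemma \ref{lemmaA}, a genuine core. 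Define $Y'$ to be the core obtained from $Y_0$ by deleting, starting at $V$, the hyperplanes with the palindromic label sequence
\[ w_0,\,w_1,\,\dots,\,w_{d-1},\,w_d,\,w_{d-1},\,\dots,\,w_1,\,w_0 \]
of length $2d+1$.

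Every consecutive pair of labels above is non-commuting, and the word is reduced: between the two occurrences of any $w_j$ with $j\le d-1$ (at positions $j+1$ and $2d+1-j$) lies the letter $w_{j+1}$, which does not commute with $w_j$. Hence this segment is an embedded combinatorial geodesic of length exactly $2d+1$, the same length as the removed alternating segment, which gives at once that the two tails have equal length and agree off this subpath, and that $|H\setminus Y|=|H\setminus Y'|$. In the palindrome $s_i=w_d$ occurs once, while each interior $w_j$ ($1\le j\le d-1$) and $w_0=s_1$ occur twice. Comparing the parities for $Y$ and $Y'$ through $Y_0$ by the parity formula, the number of $s_i$-edges changes by $1-0=1$, so the parity of $s_i$ flips; for any $s_j$ with $j\in I\setminus\{1,2,i\}$ the number of $s_j$-edges changes by an even number (by $2$ if $s_j$ is an interior $w_\ell$, and by $0$ otherwise), so its parity is preserved, as required.

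The step I expect to be the main obstacle is the geometric bookkeeping hidden in the phrase ``segment of length $2d+1$'': one must check that the deletions realising the palindrome keep the tail one-dimensional, i.e.\ that the vertebra created at each step is a single vertex, so that ``equal length'' genuinely yields ``equal number of vertices'' and the clean comparison through $Y_0$ is valid. By Corollary \ref{commutation} the vertebra of the hyperplane labelled $w_j$ consists only of edges whose labels commute with $w_j$; the task is to verify that at each stage the adjacent tail edge carries the predecessor label $w_{j-1}$, which does not commute with $w_j$, and that no other commuting edge is present, so that the vertebra collapses to a point. Once this local verification is carried out, the parity and cardinality computations above complete the proof.
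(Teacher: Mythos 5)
Your construction is exactly the paper's: replace the alternating segment of length $2d+1$ at the start of the tail by the out-and-back palindrome $w_0,\dots,w_d,\dots,w_0$ along a geodesic in $\mathcal{G}^c$ from $v_1$ to $v_i$ (then continue the rest of the tail unchanged), and read off the parity change from the label multiplicities: $s_i$ once, each interior $w_j$ twice. The proposal is correct and follows essentially the same route as the paper, including the supporting observations (non-commutation of consecutive geodesic labels so each deletion keeps the vertebra a single vertex, and reducedness of the palindromic word).
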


\begin{figure}
\centering
\begin{tikzpicture}[scale=0.8]
\draw (0,0) -- (10,0) ;
\draw (2,4) -- (-6,4);
\foreach \x in {0,4,8} \draw (\x,0) -- node[anchor=east] {$s_1$} (\x,1);
\foreach \x in {2,6,10} \draw (\x,0) -- node[anchor=east] {$s_2$} (\x, 2);
\foreach \x in {2,-2,-6} \draw (\x,4) -- node[anchor=east] {$s_2$} (\x,2);
\foreach \x in {0,-4} \draw (\x,4) -- node[anchor=east] {$s_1$} (\x, 3);

\foreach \x in {-2,2,6} \draw ({\x-1},2) -- ({\x+1},2);
\foreach \x in {1,5,9} \draw  (\x,0) node[anchor=north] {$s_3$};
\foreach \x in {3,7} \draw  (\x,0) node[anchor=north] {$s_4$};
\foreach \x in {-5.5,-2.5,-1.5} \draw  (\x,2) node[anchor=north] {$s_5$};
\foreach \x in {1.5,2.5,5.5,6.5,9.5} \draw  (\x,2) node[anchor=south] {$s_5$};
\foreach \x in {-3,1} \draw  (\x,4) node[anchor=south] {$s_3$};

\draw (0,2) node[anchor=south west] {$s_4$};
\draw (0,2) node[anchor=south east] {$s_4$};
\draw (0,2) node[anchor=north west] {$s_4$};
\draw (-4,2) node[anchor=south west] {$s_4$};
\draw (-4,2) node[anchor=south east] {$s_4$};
\draw (4,2) node[anchor=north west] {$s_4$};
\draw (4,2) node[anchor=north east] {$s_4$};
\draw (8,2) node[anchor=north west] {$s_4$};
\draw (8,2) node[anchor=north east] {$s_4$};

\draw (-6,2) -- (-5,2);
\draw (10,2) -- (9,2);
\draw (-3,2) arc (0:180:1);
\draw (0,1) arc (-90:180:1);
\draw (3,2) arc (-180:0:1);
\draw (7,2) arc (-180:0:1);

\end{tikzpicture}
\caption{Sketch of the situation in Lemma \ref{parity}, where $\Gamma$ is a cycle of length $5$ and $i=5$. Here we've drawn the hyperplanes. The cube complex would be the dual picture. The lower five squares are the old tail and the upper four squares form the end of the new tail.} 
\end{figure}

\begin{proof}
Say $v_1 = v_{i_0}, v_{i_1}, \ldots v_{i_d} = v_i$ is a path in $\mathcal{G}^c$ of the shortest length. Let $Y'$ be a subcomplex built using deletions of hyperplanes $s_{i_0}, s_{i_1}, \ldots s_{i_d}, s_{i_{d-1}}, \ldots, s_{i_0}, s_2, s_1, \ldots s_1$ starting at $v$.

Compared to $Y$, the tail of this complex contains two more edges labelled $s_{j_i}$ for $0 < j < d$. It also contains an extra edge labelled $s_{i_d} = s_i$, so the parity of $s_i$ changed and the parity of other generators $s_j $ remains the same for $j \neq 1,2,i$.
\end{proof}

Now let's change the parity of a generator that appears in the tail.

\begin{lemma} \label{squares}
If the tail of $Y$ contains a path with labels $s_1, s_2, \ldots s_1, s_2, s_1$ of length  at least $7$, then there exists a core $Y'$ such that in the associated action only the parity of $s_1$ changed.
Moreover, $|H \setminus Y| = |H \setminus Y'|$ and $Y'$ is built from the same complex as $Y$ using a sequence of deletions, whose labels agree with that of $Y$ with the exception of $5$ deletions. (We allow a deletion to be replaced by no deletion.)

\end{lemma}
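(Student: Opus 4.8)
The plan is to reduce the statement to a purely combinatorial assertion about edge-counts in the tail, isolate the obstruction that dictates the shape of the argument, and then build an explicit gadget inside the length-$7$ runway.

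Recall the parity formula recorded just before the statement: if $Y'$ is obtained from $Y$ by deletions $H_0,\ldots,H_k$ with $H_0^-\cap Y'$ meeting $H_0$ in a single edge, then the parity of $s_1$ with respect to $Y'$ equals the parity of $s_1$ with respect to $Y$ plus the number of edges labelled $s_1$ in the tail $T=H_0^-\cap Y'$, modulo $2$. So it suffices to produce a new tail whose $s_1$-edge count has opposite parity, whose count of $s_j$-edges has the \emph{same} parity for every $j\neq 1$, and which has the same number of vertices as the old tail; this last condition is exactly what yields $|H\setminus Y|=|H\setminus Y'|$ through the orbit-representative description in Lemma \ref{lemmaA}.

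First I would record the obstruction. Since $C(\mathcal{G})^{\mathrm{ab}}\cong(\mathbb{Z}/2)^{I}$, any closed edge-loop crosses the hyperplanes of each fixed label an even number of times; equivalently, two edge-paths with the same endpoints use each label an equal number of times modulo $2$. Thus no rerouting with fixed endpoints can flip the parity of the $s_1$-count, so the far end of the tail must genuinely move. Moreover, because $s_1$ and $s_2$ do not commute, the zig-zag part of the tail is one-dimensional: each vertebra produced by an $s_1$- or $s_2$-deletion is a single vertex by Corollary \ref{commutation} together with the reasoning of Lemma \ref{reduce}. If the whole modification stayed one-dimensional the tail would be a tree, so fixing the vertex count would fix the total edge count, and then an odd change in the $s_1$-count would force an odd change in the total — a contradiction. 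This is precisely why the lemma concerns squares: the modification must fill in at least one two-cell, and this is also what the budget of five altered deletions and the length-$7$ runway are for.

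Next I would build the gadget. Since $\mathcal{G}$ is non-discrete, $\mathcal{G}^c$ is not complete, so some pair of generators commutes and spans squares; using connectivity of $\mathcal{G}^c$ I choose a geodesic in $\mathcal{G}^c$ from $s_1$ to a member of such a commuting pair and splice a short detour into the runway. Concretely, at an interior vertex of the length-$7$ segment I would replace the five straight deletions $s_1,s_2,s_1,s_2,s_1$ by an alternative block of at most five deletions that runs out to a commuting square, turns across it, and rejoins the main line one step further along, so that exactly one extra edge labelled $s_1$ is created, every other label is crossed an even number of extra times, and the single added two-cell absorbs the extra vertex. The length-$7$ hypothesis guarantees room to insert this block without disturbing $H_0$ or the single-edge condition on $H_0^-\cap Y'$, and the verification that each altered hyperplane still bounds the current core runs exactly as in Lemma \ref{reduce}: consecutive non-commuting labels force the successive bounding hyperplanes to be distinct and disjoint. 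I would then close with the bookkeeping — legitimacy of the five deletions, the Euler-characteristic count on the contractible tail forcing the added square to cancel the extra vertex, and the mod-$2$ label tally flipping $s_1$ alone. The main obstacle is exactly this last design problem: arranging the detour so that the vertex total is preserved \emph{precisely} while the parity of $s_1$, and of $s_1$ only, is reversed; the commuting pair and the added two-cell are what make it possible, and pinning down the explicit five-deletion block (with its accompanying figure) is the real content.
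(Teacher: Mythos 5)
Your setup is sound: the reduction to tracking, modulo $2$, the number of edges of each label in the tail while preserving the vertex count is exactly the right framing, and your observation that any fixed-endpoint rerouting preserves all label-parities (via the abelianization $(\Z/2)^I$), so that a two-cell must be introduced, correctly identifies why squares are the crux. But the proof stops precisely where the content of the lemma begins. You write that ``pinning down the explicit five-deletion block \ldots is the real content'' and then do not pin it down; the existence of a replacement block that changes the $s_1$-count by an odd amount, changes every other label-count by an even amount, preserves the number of vertices, and fits in five altered deletions is the entire theorem, and it is not automatic. Your sketched gadget --- a geodesic in $\mathcal{G}^c$ from $s_1$ to ``a member of a commuting pair,'' spliced in as a detour --- does not work as stated: that geodesic can be arbitrarily long, so the detour violates the budget of five altered deletions (this out-and-back device is what Lemma \ref{parity} uses, where the tail is allowed to be long in proportion to $\mathrm{diam}(\mathcal{G}^c)$; Lemma \ref{squares} deliberately has no such allowance). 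Nor do you explain how the detour produces exactly one extra $s_1$-edge rather than an even number.

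What the paper actually does, and what is missing from your argument, is a case analysis on how the other generators sit relative to $s_1$ and $s_2$: (1) if some $s_3$ commutes with $s_2$ but not $s_1$, replace the deletions $s_2,s_1,s_2$ by $s_2,s_3$, creating an $(s_2,s_3)$-square that trades one $s_1$-edge for two $s_3$-edges while keeping the vertex count; (2) if some $s_3$ commutes with $s_1$ but not $s_2$, replace $s_1,s_2,s_1,s_2,s_1$ by $s_1,s_3$ followed by two $s_2$-spurs off the resulting square; (3) if every other generator commutes with both or neither of $s_1,s_2$, one must \emph{prove} (using non-discreteness of $\mathcal{G}$ and connectivity of $\mathcal{G}^c$) that there exist $s_3$ commuting with both $s_1,s_2$ and $s_4$ commuting with none of $s_1,s_2,s_3$, and then use the block $s_4,s_1,s_3$ plus an $s_4$-deletion at a vertex of the new square. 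Each case needs the bookkeeping you describe (label counts mod $2$, vertex count, legitimacy of each deletion as a bounding hyperplane, and the precaution of working inside an $s_2,s_1,s_2,s_1,s_2$ window so the new deletions create only a single square). Without exhibiting these blocks --- in particular without the existence argument in case (3) --- the proof is incomplete.
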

\begin{figure}
\centering
\begin{tikzpicture}
\filldraw (0,0) node[anchor=north] {$v_1$} circle (0.05);
\filldraw (1,0) node[anchor=north] {$v_2$} circle (0.05);
\filldraw (1/2,{sqrt(3)/2}) node[anchor=south] {$v_3$} circle (0.05);
\draw (1,0) -- (1/2,{sqrt(3)/2});

\draw (2,0) -- node[anchor=south east] {$s_2$} ({2+sqrt(2)},{sqrt(2)}) -- node[anchor= south west] {$s_1$} ({2+2*sqrt(2)},0) -- node[anchor=south east] {$s_2$} ({2+3*sqrt(2)},{sqrt(2)});

\draw (7,0) -- node[anchor=south east] {$s_2$} ({7+sqrt(2)},{sqrt(2)}) -- node[anchor=south west] {$s_3$} ({7+2*sqrt(2)},0) --  node[anchor=north west] {$s_2$} ({7+sqrt(2)},{-sqrt(2)}) -- node[anchor=north east] {$s_3$} (7,0);

\end{tikzpicture}
\caption{A sketch of the subgraph of $\mathcal{G}$ spanned by $v_1,v_2$ and $v_3$, the segment of the old tail and the new square which replaces this segment in the case 1 of the proof of Lemma \ref{squares}. } 
\end{figure}
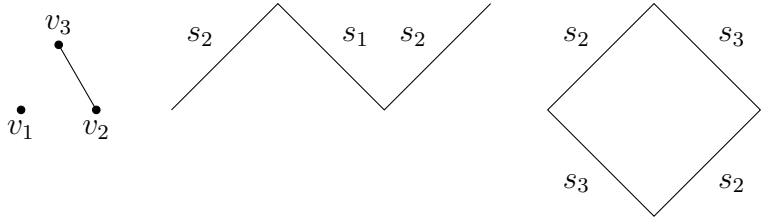

\begin{proof}

\begin{enumerate}

\item Suppose there exists distinct $s_3$ and $s_4$ which commute mutually but neither of which commutes with $s_1$.
Then instead of the deletion of the hyperplanes labelled $s_2, s_1, s_2$, delete the hyperplanes labelled $s_3, s_4$. This creates a square. Continue building the tail starting from one of the vertices of the square using the deletions of the hyperplanes with the same labels as before. The new tail contains  two fewer $s_2$ labels, two more of $s_3$ and two more of $s_4$ and one fewer $s_1$ (or the same number of $s_2$ and two more $s_3$, if $s_2 = s_4$ etc.). Hence only the parity of $s_1$ changed.

To be precise, we need to take the path labelled $s_2, s_1, s_2$ which is a subpath of a path labelled $s_1, s_2, s_1, s_2, s_1$ in the tail, as otherwise deleting a hyperplane labelled $s_3$ could introduce more than just a side of a square. Similarly for the other cases in this proof.

\item Suppose there is some $s_3$ commuting with $s_1$, but not $s_2$.
Then instead of the deletion of the hyperplanes labelled $s_1, s_2, s_1, s_2, s_1$, delete the hyperplanes labelled $s_1, s_3$ and then delete the hyperplanes labelled $s_2$ at two of the vertices of the square. This creates a square with two spurs.
Continue building the tail starting from the remaining vertex of the square. The new tail contains the same number of $s_2$ labels, two more of $s_3$ and one fewer $s_1$. Hence only the parity of $s_1$ changed.

\item Lastly, if neither of the above cases holds, then $\mathcal{G}$ consists of $v_1$, isolated vertices $I$, vertices $S_1$ at distance $1$ from $v_1$ and vertices $S_2$ at distance $2$ from $v_1$. Moreover, there exists a vertex adjacent to $v_1$ as the graph is non-discrete. Every vertex adjacent to $v_1$  is adjacent to  $v_2$, so $v_2 \in S_2$.

The induced graph on vertices of $S_2$ is discrete because every edge intersects $S_1$. Take any $u \in S_1$. Consider a path from $u$ to $v_1$ in $\mathcal{G}^c$. Somewhere along this path we go from a vertex, which is connected to both $v_1$ and $v_2$ to a vertex which is connected to neither.
Therefore there are $s_3$ and $s_4$ such that $s_3$ commutes with $s_1$ and $s_2$ and $s_4$ does not commute with any of $s_1, s_2$ and $s_3$. Now instead of the deletion of the hyperplanes labelled $s_1, s_2, s_1, s_2, s_1$ delete the hyperplanes labelled $s_4, s_1, s_3, s_4$. This creates a square with labels $s_1, s_3, s_1, s_3$. Continue building the tail. We have one fewer $s_1$, two fewer $s_2$ and two more of each $s_3$ and $s_4$.

\end{enumerate}
Let $Y'$ be the new subcomplex. By construction $|H \setminus Y| = |H \setminus Y'|$ and the sequences of labels of deleted hyperplanes for the two complexes differ at no more than $5$ places.
\end{proof}

Using Lemmas \ref{parity} and \ref{squares}, we can now modify segments of the tail to make the parity of all elements even. This completes the proof of the main theorem.

\bibliographystyle{alpha}
\bibliography{mybib}

\end{document}